\crefname{lem}{Lemma}{Lemmas}
\crefname{thm}{Theorem}{Theorems}
\crefname{cor}{Corollary}{Corollaries}
\crefname{prop}{Proposition}{Propositions}
\crefname{conj}{Conjecture}{Conjectures}
\crefname{qu}{Question}{Questions}
\crefname{openproblem}{Open Problem}{Open Problems}
\newcommand{\defn}[1]{\textcolor{Maroon}{\emph{#1}}}
\newcommand{\bigchi}{\raisebox{1.55pt}{\scalebox{1.2}{\ensuremath\chi}}}
\newcommand{\fchi}{\bigchi^f\hspace*{-0.2ex}}
\newcommand{\cchi}{\bigchi_{\star}\hspace*{-0.2ex}}
\newcommand{\dchi}{\bigchi\hspace*{-0.1ex}_{\Delta}\hspace*{-0.2ex}}
\newcommand{\cfchi}{\bigchi^f_{\star}\hspace*{-0.1ex}}
\newcommand{\dfchi}{\bigchi^f_{\Delta}\hspace*{-0.1ex}}
\newcommand{\CartProd}{\mathbin{\square}}
\renewcommand{\Pr}{\,\mathbb{P}}
\newcommand{\TT}{\mathcal{T}}
\newcommand{\GG}{\mathcal{G}}
\newcommand{\STAR}{\mathcal{S}}
\newcommand{\CC}{\mathcal{C}}
\def\NAT@spacechar{~}
\newcommand{\half}{\ensuremath{\protect\tfrac{1}{2}}}
\DeclarePairedDelimiter{\ceil}{\lceil}{\rceil}
\renewcommand{\ge}{\geqslant}
\renewcommand{\le}{\leqslant}
\renewcommand{\geq}{\geqslant}
\renewcommand{\leq}{\leqslant}
\DeclareMathOperator{\dist}{dist}
\DeclareMathOperator{\tw}{tw}
\renewcommand{\thefootnote}{\fnsymbol{footnote}}
\theoremstyle{plain}
\newtheorem{thm}{Theorem}
\newtheorem{lem}[thm]{Lemma}
\newtheorem{cor}[thm]{Corollary}
\newtheorem{prop}[thm]{Proposition}
\newtheorem{obs}[thm]{Observation}
\newtheorem{qu}[thm]{Question}
\theoremstyle{definition}
\newcommand{\QQ}{\mathbb{Q}}
\newcommand{\NN}{\mathbb{N}}
\newcommand{\RR}{\mathbb{R}}
\def\longequation{$$\vcenter\bgroup\advance\hsize by -9em%
	\noindent\ignorespaces\refstepcounter{equation}}%
\def\endlongequation{\egroup\eqno(\theequation)$$\global\@ignoretrue}
\begin{document}
	
	\author{Louis Esperet\,\footnotemark[2] \qquad David~R.~Wood\,\footnotemark[5]}
	
	\footnotetext[2]{Laboratoire G-SCOP  (CNRS, Univ.\ Grenoble Alpes), Grenoble, France 
		(\texttt{louis.esperet@grenoble-inp.fr}). Partially supported by the French ANR Projects
		GATO (ANR-16-CE40-0009-01), GrR (ANR-18-CE40-0032), TWIN-WIDTH
		(ANR-21-CE48-0014-01) and by LabEx
		PERSYVAL-lab (ANR-11-LABX-0025).}
	
	\footnotetext[5]{School of Mathematics, Monash   University, Melbourne, Australia  (\texttt{david.wood@monash.edu}). Research supported by the Australian Research Council.}
	
	\sloppy
	
	\title{\textbf{Colouring  Strong Products}}
	
	\maketitle
	% \thanks{\textbf{MSC Classification}: ???}
	
	\begin{abstract}
		Recent results show that several important graph classes can be
		embedded as subgraphs of strong products of simpler graphs classes
		(paths, small cliques, or graphs of bounded treewidth).  This paper
		develops general techniques to bound the chromatic number (and its
		popular variants, such as fractional, clustered, or defective
		chromatic number) of the strong product of general graphs with simpler
		graphs classes, such as paths, and more generally graphs of bounded
		treewidth. We also highlight important links between the study of
		(fractional) clustered colouring of strong products and other topics,
		such as asymptotic dimension in metric geometry and topology, site
		percolation in probability theory, and the Shannon capacity in information theory.
	\end{abstract}
	
	\renewcommand{\thefootnote}{\arabic{footnote}}

	\section{Introduction}
	\label{Introduction}

	The past few years have seen a renewed interest in the structure of strong
	products of graphs. One motivation is the Planar Graph Product Structure Theorem (see \cref{SubgraphsStrongProducts}), which shows that every planar
	graph is a subgraph of the strong product of a graph with bounded
	treewidth and a path. As a consequence, results on the structure of
	planar graphs can be simply deduced from the study of the structure of strong
	products of graphs (and in particular from the study of the strong product
	of a graph with a path). This theorem was preceded by several results
	that can be also stated as product structure theorems (where the host
	graph is the strong product of paths, trees, or small complete
	graphs); see \cref{SubgraphsStrongProducts} below. Note that grids in
	finite-dimensional euclidean spaces can be
	expressed as the strong product of finitely many paths, and colouring
	properties of these grids are related to important topological or
	metric 
	properties of these  spaces; see \cref{sec:hex} and \cref{sec:asdim}. 
	
	It turns out that the study of colouring properties of strong products
	of graphs has interesting connections with various problems in combinatorics, which we highlight below. In particular, in order
	to understand the chromatic number (or the clustered or defective
	chromatic number) of strong products, it is very helpful to understand the
	fractional versions of such  colourings, which have strong ties with site 
	percolation in probability theory (see \cref{sec:perco}) and Shannon capacity in information theory (see \cref{sec:shannon}).
	
	We start with the definitions of various graph products, as well as various graph colouring 
	notions that are studied in this paper. We then give an overview of our main results in \cref{sec:results}.
	
	\subsection{Definitions}\label{sec:def}

	The \defn{cartesian product} of graphs $A$ and $B$, denoted by $A\CartProd B$, is the graph with vertex set $V(A)\times V(B)$, where distinct vertices $(v,x),(w,y)\in V(A)\times V(B)$ are adjacent if: 
	$v=w$ and $xy\in E(B)$, or $x=y$ and $vw\in E(A)$.
	The \defn{direct product} of graphs $A$ and $B$, denoted by $A\times B$, is the graph with vertex set $V(A)\times V(B)$, where distinct vertices $(v,x),(w,y)\in V(A)\times V(B)$ are adjacent if $vw\in E(A)$ and $xy\in E(B)$. 
	The \defn{strong product} of graphs $A$ and $B$, denoted by $A\boxtimes B$, is the graph $(A\CartProd B)\cup (A\times B)$. 
	For graph classes $\GG_1$ and $\GG_2$, let 
	\begin{align*}
		\GG_1\CartProd \GG_2 & := \{G_1\CartProd G_2: G_1\in \GG_1,G_2\in\GG_2\}\\
		\GG_1\times \GG_2 & := \{G_1\times G_2: G_1\in \GG_1,G_2\in\GG_2\}\\
		\GG_1\boxtimes \GG_2 & := \{G_1\boxtimes G_2: G_1\in \GG_1,G_2\in\GG_2\}.
	\end{align*}
	
	A \defn{colouring} of a graph $G$ is simply a function $f:V(G)\to\mathcal{C}$ for some set $\mathcal{C}$ whose elements are called \defn{colours}. If $|\mathcal{C}| \leq k$ then $f$ is a \defn{$k$-colouring}. An edge $vw$ of $G$ is \defn{$f$-monochromatic} if $f(v)=f(w)$. An \defn{$f$-monochromatic component}, sometimes called a \defn{monochromatic component}, is a connected component of the subgraph of $G$ induced by $\{v\in V(G):f(v)=\alpha\}$ for some  $\alpha\in \mathcal{C}$. We say $f$ has \defn{clustering} $c$ if every $f$-monochromatic component has at most $c$ vertices. The \defn{$f$-monochromatic degree} of a vertex $v$ is the degree of $v$ in the monochromatic component containing $v$. Then $f$ has \defn{defect} $d$ if every $f$-monochromatic component has maximum degree at most $d$ (that is, each vertex has monochromatic degree at most $d$). There have been many recent papers on clustered and defective colouring \citep{NSSW19,vdHW18,KO19,CE19,EJ14,EO16,DN17,LO18,HW19,MRW17,LW1,LW2,LW3,LW4,LW5,NSW22,DS20}; see \citep{WoodSurvey} for a survey. 
	
	The general goal of this paper is to study defective and clustered chromatic number of graph products,  with the focus on minimising the number of colours with bounded defect or bounded clustering a secondary goal. 
	
	The \defn{clustered chromatic number} of a graph class $\GG$, denoted by $\cchi(\GG)$, is the minimum integer $k$ for which there exists an integer $c$ such that every graph in $\GG$ has a $k$-colouring with clustering $c$. If there is no such integer $k$, then  $\GG$ has \defn{unbounded} clustered chromatic number. The \defn{defective chromatic number} of a graph class $\GG$, denoted by $\dchi(\GG)$, is the minimum integer $k$ for which there exists $c\in\NN$ such that every graph in $\GG$ has a $k$-colouring with defect $c$. If there is no such integer $k$, then  $\GG$ has \defn{unbounded} defective chromatic number.  Every colouring of a graph with clustering $c$ has defect $c-1$. Thus $\dchi(\GG)\leq \cchi(\GG) \leq\bigchi(\GG)$ for every class $\GG$. 
	
	Obviously, for all graphs $G$ and $H$,
	$$\max\{ \chi(G), \chi(H) \} \leq \chi( G \boxtimes H ) \leq \chi( G) \, \chi( H ) .$$
	The upper bound is tight if $G$ and $H$ are complete graphs, for example. 
	The lower bound is tight if $G$ or $H$ has no edges. 
	However, \citet{Vesztergombi78} proved that $\chi(G\boxtimes K_2)\geq \chi(G)+2$, implying that if $E(H)\neq\emptyset$ then 
	$$ \chi( G \boxtimes H ) \geq \chi(G) + 2 .$$
	More generally, \citet{KM94} proved  that 
	$$\chi( G \boxtimes H ) \geq \chi(G) + 2\omega(H) -2 .$$
	\citet{Zerovnik06} studied the chromatic numbers of the strong product
	of odd cycles.
	
	A classical result of \citet{Sabidussi57} states that for any graphs
	$G$ and $H$, $\chi(G\Box H)=\max\{\chi(G),\chi(H)\}$, while
	a famous conjecture of Hedetniemi stated that
	$\chi(G\times H)=\min\{\chi(G),\chi(H)\}$. This conjecture was
	recently disproved by \citet{Shitov19}. The remainder of the
	paper focuses on the strong product $\boxtimes$ of graphs rather
	than $\Box$ and $\times$.
	
	%%%%%%%%%%%%%%%%
	\subsection{Subgraphs of Strong Products}
	\label{SubgraphsStrongProducts}
		
	The study of colourings of strong products is partially motivated from
	the following results that show that natural classes of graphs are
	subgraphs of certain strong products. Thus, colouring results for the
	product imply an analogous result for the original class. Later in the paper we use \cref{DegreeTreewidthStructure},
	while \cref{KL,PlanarPartition} are not used. Nevertheless, these results provide further motivation for studying colouring of strong graph products, since they
	show that several classes with a complicated structure can be expressed
	as subgraphs of the strong product of significantly simpler graph classes.
	
	For a graph $G$ and an integer $d\ge 1$, let $\boxtimes_dG$ denote the $d$-fold strong
	product $G\boxtimes \dots \boxtimes G$.
	
	\begin{thm}[\citep{KL07}] 
		\label{KL}
		For every $c\in \NN$ there exists $d\in O(c\log c)$, such that if $G$ is a graph with $|\{w\in V(G):\dist(v,w)\leq r\}| \leq r^c$ for every vertex $v\in V(G)$ and integer  $r\geq 2$, then $G \subseteq \boxtimes_d P$. 
	\end{thm}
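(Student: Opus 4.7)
The plan is to build a hierarchical net decomposition of $G$ at geometrically growing scales, properly colour the net at each scale using boundedly many colours, and use the resulting colour-sequences as coordinates of an embedding into $\boxtimes_d P$.

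The first step is to upgrade the polynomial growth hypothesis into a doubling inequality via a standard packing argument: there exists $\lambda = 2^{O(c)}$ such that every ball of radius $2r$ in $G$ is covered by at most $\lambda$ balls of radius $r$ (once $r$ exceeds a small absolute constant). Indeed, a maximal $r$-separated subset of $B(v,2r)$ has size at most the ratio of two polynomial volume bounds at scales differing by a constant factor, which is $2^{O(c)}$. For each scale $s \ge 0$, I would then greedily select a maximal $2^s$-separated subset $N_s \subseteq V(G)$, nested so that $N_0 \supseteq N_1 \supseteq \cdots$, and attach to each $v \in V(G)$ a parent $\pi_s(v) \in N_s$ at distance at most $2^s$. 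Doubling guarantees that any ball of radius $C \cdot 2^s$ in $G$ contains at most $K = 2^{O(c)}$ points of $N_s$, so a greedy sequential colouring assigns $N_s$ a proper $K$-colouring in which any two net points at distance $\le C \cdot 2^s$ receive distinct colours.

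The naive encoding $v \mapsto \bigl(\mathrm{colour}_s(\pi_s(v))\bigr)_{s \ge 0}$ is essentially an injective adjacency-preserving map into a product of $\Theta(\log |V(G)|)$ paths of length $K$; the crux of the proof, and the step I expect to be the main obstacle, is to compress this to $O(c \log c)$ coordinates in an Assouad-style manner. Two key observations enable the compression: for any edge $uv$ of $G$ only $O(c)$ scales are simultaneously ``active'' (coarse scales collapse $u$ and $v$ into vertices with close or identical parents, whereas at very fine scales the parents stabilise), and scales whose indices differ by at least $\tau = \Theta(\log \lambda) = \Theta(c)$ decouple. Partitioning the scales into $\tau = O(c)$ residue classes modulo $\tau$, and within each class interleaving the $\log_2 K = O(c)$-bit colour strings into $O(\log c)$ integer coordinates to absorb carries between consecutive scale blocks, yields $d = O(c \log c)$ coordinates in total.

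It then remains to verify that the resulting map $\phi : V(G) \to \mathbb{Z}^d$ is injective (distinct vertices disagree at some sufficiently fine scale where their parents differ) and that $\phi(u)$ and $\phi(v)$ differ by at most one in each coordinate whenever $uv \in E(G)$, which is exactly the condition for $G \subseteq \boxtimes_d P$. The interleaving and carry-control scheme has to be designed explicitly so that it preserves this unit-Lipschitz property, and checking this is a bookkeeping exercise once the scale decoupling has been established.
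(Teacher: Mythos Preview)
The paper does not prove this theorem. \cref{KL} is quoted from \citep{KL07} purely as motivation in \cref{SubgraphsStrongProducts}, and the authors explicitly say that ``\cref{KL} and \cref{PlanarPartition} will not be needed'' in the remainder of the paper. There is therefore no in-paper proof to compare your attempt against.

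For what it is worth, your sketch is recognisably the Krauthgamer--Lee / Assouad strategy from the cited reference: extract a doubling constant $\lambda=2^{O(c)}$ from the polynomial-growth hypothesis, build a nested hierarchy of $2^s$-nets, colour each net with $2^{O(c)}$ colours so that nearby net points get distinct colours, and then compress the resulting colour-sequence into $O(c\log c)$ integer coordinates. That is the right architecture. The part you flag as the ``main obstacle'' is indeed where the real work lies, and your description of it is still quite loose. In particular, the assertion that ``for any edge $uv$ of $G$ only $O(c)$ scales are simultaneously active'' is not literally true (the parents $\pi_s(u),\pi_s(v)$ can differ at arbitrarily many scales); what one actually needs is that within each residue class modulo $\tau$ only boundedly many \emph{consecutive} scales see a change, and that changes across residue classes do not interact after interleaving. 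Making the ``carry-control'' precise so that the final map is genuinely $1$-Lipschitz in $\ell_\infty$ is not just bookkeeping---getting the bit-interleaving right so that a single colour change at one scale does not cascade into a coordinate jump larger than $1$ requires a careful choice of encoding, and this is exactly the technical heart of \citep{KL07}. If you intend to write this up in full you should expect that step to take most of the effort.
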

	
	\begin{thm}[\citep{DO95,Wood09,DW22}] 
		\label{DegreeTreewidthStructure}
		Every graph with maximum degree $\Delta\in\mathbb{N}^+$ and treewidth less than $k\in\mathbb{N}^+$ is a subgraph of $T\boxtimes K_{20k\Delta}$ for some tree $T$ with maximum degree at most $20k\Delta^2$. 
	\end{thm}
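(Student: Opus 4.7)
The theorem is equivalent to constructing a \emph{tree partition} of $G$: a tree $T$ together with a partition $\{V_x : x \in V(T)\}$ of $V(G)$ such that every edge of $G$ has its endpoints in a single part or in parts $V_x,V_y$ with $xy\in E(T)$. Given such a partition with $|V_x|\le 20k\Delta$ for every $x$ and $\Delta(T)\le 20k\Delta^2$, arbitrary injections $V_x \hookrightarrow V(K_{20k\Delta})$ combined with the identity on $V(T)$ yield an embedding $G \subseteq T \boxtimes K_{20k\Delta}$. So the whole task reduces to producing such a partition.

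I would start from a tree decomposition $(B_t)_{t\in V(T_0)}$ of $G$ of width less than $k$, so $|B_t|\le k$ for all $t$. The natural first attempt is to root $T_0$, assign each vertex $v$ to the topmost bag $\mathrm{top}(v)$ of $T_0$ containing it, and set $V_t:=B_t\setminus B_{\mathrm{parent}(t)}$. This gives a partition into parts of size at most $k$, but it is \emph{not} in general a tree partition: the endpoints of an edge of $G$ can have tops that lie arbitrarily far apart on a single root-to-leaf path of $T_0$ (and moreover a single bag may have unboundedly many children). So both $T_0$ and the assignment must be reshaped.

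The bounded-degree hypothesis provides the control needed for the reshaping. If $v \in V_t$, then the subtree of $T_0$ consisting of bags containing $v$ lies entirely in the subtree rooted at $t$, so every neighbor of $v$ outside $V_t$ is reached through one of at most $\Delta$ children of nodes along $T_v$. Summing over $V_t$, at most $k\Delta$ children of $t$ carry any edge of $G$ down from $V_t$. Following \citet{DO95,Wood09}, one iteratively contracts paths and aggregates bags of $T_0$ so that (i) every edge of $G$ ends up within a single part or across adjacent parts of the new tree $T$, and (ii) each aggregated part has size at most $20k\Delta$. A charging argument --- charge each tree-edge of $T$ incident to $x$ to a distinct pair consisting of a vertex of $V_x$ and one of its $G$-neighbors outside $V_x$ --- then forces $\deg_T(x) \le |V_x|\cdot\Delta \le 20k\Delta^2$.

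The main obstacle is the simultaneous control of part size and tree degree: contracting $T_0$ to eliminate long ``jumps'' of edges tends to amalgamate several parts into one large cluster, while splitting a large cluster re-introduces high-degree branching. The constant $20$ reflects the slack needed to make the reshaping procedure terminate while respecting both bounds in tandem; any sufficiently large absolute constant would suffice, and the linear dependence on $k\Delta$ (respectively $k\Delta^2$) is essentially forced by the two counting arguments above.
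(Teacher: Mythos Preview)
The paper does not prove this theorem; it is quoted from \citep{DO95,Wood09} and used as a black box (see the proof of \cref{DegreeTreewidthClustered}). So there is no proof in the paper to compare against.

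Your sketch is in the right spirit and the reduction to tree-partitions is exactly the correct reformulation. However, the part you label ``following \citep{DO95,Wood09}, one iteratively contracts paths and aggregates bags'' is precisely where all the work lies, and as written it is not a proof but a pointer to one. In particular: your observation that at most $k\Delta$ children of $t$ ``carry any edge of $G$ down from $V_t$'' is not quite right as stated, because the subtree $T_v$ of bags containing a vertex $v\in V_t$ need not be a path, so edges from $v$ may exit through children of \emph{many} nodes of $T_v$, not just children of $t$; and the charging argument you give for $\deg_T(x)\le |V_x|\cdot\Delta$ presupposes that the reshaping has already forced every cross-part edge to go to a \emph{distinct} neighbour of $x$ in $T$, which is exactly what the reshaping must establish. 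The actual construction in \citep{Wood09} proceeds by a careful BFS-style layering of the tree decomposition that simultaneously bounds part sizes and branching, and the constant $20$ (improving the original constant of \citep{DO95}) arises from a specific accounting in that layering. If you want to give a self-contained proof here, you would need to spell out that procedure; otherwise, citing \citep{DO95,Wood09} as the paper does is the honest option.
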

	
	\begin{thm}[\citep{DJMMUW20,UWY22}]
		\label{PlanarPartition}
		Every planar graph is a subgraph of:
		\begin{compactenum}[(a)]
			\item $H\boxtimes P$ for some planar graph $H$ of treewidth at most $6$ and for some path $P$;
			\item $H\boxtimes P\boxtimes K_3$ for some planar graph $H$ of treewidth at most $3$ and for some path $P$. 
		\end{compactenum}
	\end{thm}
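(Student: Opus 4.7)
The plan is to prove both parts by constructing a \emph{layered $H$-partition} of the given planar graph $G$: a partition $\mathcal{P}=(P_1,\dots,P_m)$ of $V(G)$ together with a BFS layering $(V_0,V_1,\dots)$ of $G$ such that the \emph{quotient graph} $H := G/\mathcal{P}$ has small treewidth and every part $P_i$ intersects every layer $V_j$ in at most $t$ vertices. Such a partition immediately yields $G \subseteq H \boxtimes P \boxtimes K_t$: assign to each vertex $v$ the triple consisting of its part index, its layer index, and its rank within $P_i \cap V_j$, and check that every edge of $G$ becomes an edge of the strong product. For part~(a) we will need $\tw(H) \le 6$ and $t=1$; for part~(b) we will need $\tw(H) \le 3$ and $t=3$.

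First I would fix a plane embedding of $G$, root an arbitrary spanning BFS tree $T$ at some vertex, and let $V_0, V_1, \dots$ be the BFS layers. The parts of $\mathcal{P}$ will be \emph{vertical paths} in $T$, that is, monotone paths from a vertex up to one of its ancestors. Since such a path meets each BFS layer in at most one vertex, the property $t = 1$ is automatic for part~(a). The heart of the argument is then to choose the family of vertical paths so that the quotient graph $H$ has treewidth at most $6$.

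To control $\tw(H)$, I would follow the Pilipczuk--Siebertz ``BFS-peeling'' approach: build a tree decomposition whose bags track the vertical paths incident to each face of the plane subgraphs obtained by successively removing BFS layers, exploiting the crucial fact that non-tree edges of a BFS tree only join vertices in the same or consecutive layers. The main obstacle, and where the constants $6$ and $3$ ultimately come from, is bounding the number of vertical paths that can simultaneously appear in a single bag; this requires a careful case analysis of how vertical paths can cross each face boundary as layers are peeled off. For part~(b), I would then refine this by bundling together up to three vertical paths that lie in a common ``column'' of the embedding, absorbing the multiplicity into a $K_3$ factor and thereby reducing the treewidth of $H$ from $6$ to $3$, along the lines of the Ueckerdt--Wood--Yi improvement.
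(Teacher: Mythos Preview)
This theorem is not proved in the present paper at all: it is quoted verbatim from \cite{DJMMUW20,UWY} as background motivation, and the paper explicitly says that ``\cref{KL} and \cref{PlanarPartition} will not be needed'' in the remainder. So there is no proof here to compare your proposal against.

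That said, a brief comment on your sketch relative to the actual proofs in the cited sources. Your reduction from a layered partition with layer-width $t$ to an embedding in $H\boxtimes P\boxtimes K_t$ is exactly right, and so is the choice of BFS layers together with vertical paths in a rooted BFS spanning tree as the parts. Where your outline drifts from the original argument is the mechanism controlling $\tw(H)$. The proof in \cite{DJMMUW20} does not proceed by ``peeling off BFS layers'' in the Pilipczuk--Siebertz style; instead one first triangulates, and then recursively decomposes the triangulation using \emph{tripods}: triples of vertical paths whose bottom endpoints form a triangular face. Each tripod separates the remaining region into near-triangulations, each bounded by at most three earlier tripods; the recursion tree yields a tree decomposition of the quotient whose bags contain the paths of at most three tripods, giving treewidth at most $8$ initially and $6$ after a sharpening. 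For part~(b), the improvement in \cite{UWY} is not ``bundling vertical paths in a common column of the embedding'': rather, each tripod is contracted to a \emph{single} vertex of the quotient instead of three, which drops the bag size to $4$ (hence treewidth~$3$) while pushing the three paths of each tripod into the $K_3$ factor. Your plan as written would likely stall at the step where you try to bound the number of vertical paths meeting a face boundary during a layer-by-layer peel, because that quantity is not bounded without the tripod structure to enforce it.
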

	
	The interested reader is referred to \citep{DJMMUW20,DMW,DHHW22,HW21b,ISW,UTW} for extensions of this result to graphs of bounded genus, and other
	natural generalisations of planar graphs.

	\subsection{Hex Lemma}\label{sec:hex}
	
	The famous Hex Lemma says that the game of Hex cannot end in a draw; see \citep{HT19} for an account of the rich history of this game. 
	As illustrated in \cref{Hex}, the Hex Lemma is equivalent to saying that in every 2-colouring of the vertices of the $n\times n$ triangulated grid, there is a monochromatic path from one side to the opposite side.
	
	\begin{figure}[!h]
		\centering\includegraphics{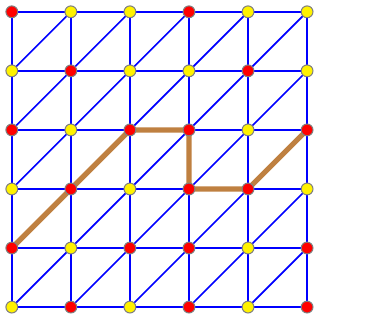}
		\caption{A Hex game.}
		\label{Hex}
	\end{figure}
	
	This result generalises to higher dimensions as follows. Let $G_n^d$ be the graph with vertex-set $\{1,\dots,n\}^d$, where distinct vertices $(v_1,\dots,v_d)$ and $(w_1,\dots,w_d)$ are adjacent in $G^d_n$ whenever $w_i\in\{v_i,v_i+1\}$ for each $i\in\{1,\dots,d\}$, or  $v_i\in\{w_i,w_i+1\}$ for each $i\in\{1,\dots,d\}$.
	Note that if each vertex $(v_1,\dots,v_d)$ is coloured $(\sum_i v_i)\bmod{(d+1)}$, then adjacent vertices $(v_1,\dots,v_d)$ and $(w_1,\dots,w_d)$ are assigned distinct colours, since $|(\sum_i v_i)-(\sum_i w_i)| \leq d$. Thus $\bigchi(G_n^d)\leq d+1$. In fact, $\bigchi(G^d_n)=d+1$ since $\{(v_1,\dots,v_d),(v_1+1,v_2,\dots,v_d),(v_1+1,v_2+1,v_3,\dots,v_d),\dots,(v_1+1,v_2+1,\dots,v_d+1)\}$ is a $(d+1)$-clique. The $d$-dimensional Hex Lemma provides a stronger lower bound: in every $d$-colouring of $G_n^d$ there is a monochromatic path from one `side' of $G^d_n$ to the opposite side \citep{Gale79}. Thus $$\cchi(\{ G^d_n: n \in \NN\})=d+1.$$ 
	See \citep{Gale79,LMST08,MP08,BDN17,MP08,Matdinov13,Karasev13} for related results. For example, \citet{Gale79} showed that this theorem is equivalent to the Brouwer Fixed Point Theorem. 
	
	These results are related to clustered colourings of strong products,
	as we now explain. Let $P_n$ denote the $n$-vertex path and  $\boxtimes_d P_{n}$ be the $d$-dimensional grid $P_n\boxtimes \cdots \boxtimes P_n$. Then $G_n^d$ is a subgraph of $\boxtimes_d P_{n}$. So 
	$$\cchi(\{ \boxtimes_d P_{n}  : n \in\NN\})\geq 
	\cchi(\{ G^d_n : n \in\NN\}) = d+1.$$ 
	A corollary of our main result is that equality holds here. In particular, \cref{HexGrid} shows that there is a $(d+1)$-colouring of $P^{\boxtimes d}_{n}$ with clustering $d!$. Thus
	$$\cchi(\{ \boxtimes_d P_{n} : n \in\NN\}) = d+1.$$
	
	%%%%%%%%%%%%%%%%
	\subsection{Asymptotic Dimension}\label{sec:asdim}
	
	Given a graph $G$ and an integer $\ell\ge 1$, $G^\ell$ is the graph
	obtained from $G$ by adding an edge between each pair of distinct vertices $u,v$
	at distance at most $\ell$ in $G$. Note that $G^1=G$. We say that a
	subset $S$ of vertices of a graph $G$ has \defn{weak diameter at most
		$d$} in $G$ if any two vertices of $S$ are at distance at most $d$
	in $G$.
	
	The asymptotic dimension of a metric space was introduced by
	\citet{Gro93} in the context of geometric group theory. For graph
	classes (and their shortest paths metric), it can be defined as
	follows~\citep{BBEGLPS}: the \defn{asymptotic dimension } of a graph
	class $\mathcal{F}$ is the minimum  $m\in\NN_0$ for which there is a function $f: \NN
	\rightarrow \NN$ such that for every $G \in \mathcal{F}$ and $\ell \in
	{\mathbb N}$, $G^\ell$ has an $(m+1)$-colouring in which each
	monochromatic component has  weak diameter at most $f(\ell)$ in
	$G^\ell$. (If no such integer $m$ exists, the asymptotic dimension of
	$\mathcal{F}$ is said to be $\infty$).
	
	Taking $\ell= 1$, we see that graphs from any graph class $\mathcal{F}$ of asymptotic dimension at most $m$ have an $(m+1)$-colouring in which each monochromatic component has bounded
	weak diameter. If, in addition, the graphs in $\mathcal{F}$ have bounded maximum degree, then all graphs in $\mathcal{F}$ have $(m+1)$-colourings with bounded clustering \citep{BBEGLPS}, implying $\cchi(\mathcal{F})\leq m+1$. 
	
	It is well-known  that the class of $d$-dimensional grids (with or
	without diagonals) has asymptotic dimension $d$ (see~\citep{Gro93}), and since they also
	have bounded degree,  it directly follows from the remarks above that $d$-dimensional grids have $(d+1)$-colourings with bounded clustering.
	
	An important problem is to bound the dimension of the product of topological or metric spaces as a function of their dimensions. It follows from the work of \citet{BD06} and \citet{BDLM08} that if
	$\mathcal{F}_1$ and $\mathcal{F}_2$ are classes of asymptotic
	dimension $m_1$ and $m_2$, respectively, then the class $\mathcal{F}_1 \boxtimes \mathcal{F}_2 := \{G_1\boxtimes G_2\,:\, G_1\in\mathcal{F}_1, G_2\in \mathcal{F}_2\}$ has asymptotic dimension at most $m_1+m_2$. For example, that $d$-dimensional grids have asymptotic dimension at most $d$ can be deduced from this product theorem by induction, using the fact that the family of paths has asymptotic dimension 1.
	In particular, if two classes $\mathcal{F}_1$ and
	$\mathcal{F}_2$ have asymptotic dimension $m_1$ and $m_2$,
	respectively, and have uniformly bounded maximum degree, then the 
	graphs in $ \mathcal{F}_1 \boxtimes \mathcal{F}_2$ have $(m_1+m_2+1)$-colourings
	with bounded clustering. Since  graphs of bounded treewidth have asymptotic dimension at most 1 \citep{BBEGLPS}, this implies the following.
	
	\begin{restatable}{thm}{DegreeTreewidthClustered}
		\label{DegreeTreewidthClustered}
		If $G_1,\dots,G_d$ are graphs with treewidth at most $k\in\NN$ and maximum degree at most $\Delta\in\NN$, then $G_1\boxtimes \dots\boxtimes G_d$ is $(d+1)$-colourable with clustering at most some function $c(d,\Delta,k)$.
	\end{restatable}
	
	Similarly, using the fact that graphs excluding some fixed minor have asymptotic dimension at most 2
	\citep{BBEGLPS}, we have the following.
	
	\begin{thm}
		\label{DegreeMinorClustered}
		Let $H$ be a graph. If $G_1,\dots,G_d$ are $H$-minor free graphs with maximum degree at most $\Delta\in\NN$, then $G_1\boxtimes \dots\boxtimes G_d$ is $(2d+1)$-colourable with clustering at most some function $c(d,\Delta,H)$.
	\end{thm}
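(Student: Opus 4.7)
The plan is to mimic exactly the argument sketched just before the statement of \cref{DegreeTreewidthClustered1}, replacing the asymptotic dimension bound for bounded treewidth by the bound for $H$-minor-free graphs. So the proof will combine three ingredients: (i) the theorem of~\citep{BBEGLPS} stating that the class of $H$-minor-free graphs has asymptotic dimension at most $2$; (ii) the product theorem of \citet{BD06} and \citet{BDLM08}, which bounds the asymptotic dimension of a strong product by the sum of the asymptotic dimensions; and (iii) the observation from~\citep{BBEGLPS} that, in a class of uniformly bounded maximum degree, bounded weak diameter of monochromatic components is equivalent to bounded clustering.

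First, I would let $\mathcal{F}$ be the class of $H$-minor-free graphs of maximum degree at most $\Delta$. By (i), $\mathcal{F}$ has asymptotic dimension at most $2$. Applying the product theorem (ii) inductively $d-1$ times to $\mathcal{F} \boxtimes \cdots \boxtimes \mathcal{F}$ ($d$ factors) yields that this class has asymptotic dimension at most $2d$. Taking $\ell=1$ in the definition of asymptotic dimension, every graph $G=G_1\boxtimes\cdots\boxtimes G_d$ with $G_i\in\mathcal{F}$ admits a $(2d+1)$-colouring in which each monochromatic component has weak diameter at most some constant $D=D(d,\Delta,H)$ in $G$.

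Now I would convert weak diameter to clustering using the bounded degree of the product. Since each $G_i$ has maximum degree at most $\Delta$, the strong product $G$ has maximum degree at most $(\Delta+1)^d-1$. Hence any ball of radius $D$ in $G$ contains at most $c(d,\Delta,H) := \sum_{i=0}^{D}((\Delta+1)^d-1)^i$ vertices. Since a set of weak diameter at most $D$ is contained in a ball of radius $D$ around any of its vertices, every monochromatic component has at most $c(d,\Delta,H)$ vertices, which is the claimed clustering bound.

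There is essentially no obstacle beyond quoting the right black boxes in the right order; the only small point to be careful about is that the product theorem in \citep{BD06,BDLM08} is stated for strong products of two classes, so one needs a short induction to handle $d$ factors (and to track that the bounded-degree assumption is preserved along the induction, which it is, with the bound $(\Delta+1)^d-1$ above). Likewise, the step from weak diameter to clustering relies only on the elementary fact that the strong product preserves bounded maximum degree, and so poses no difficulty.
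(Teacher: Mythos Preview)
Your proposal is correct and follows essentially the same approach as the paper: the paper does not give a formal proof of \cref{DegreeMinorClustered} but derives it, exactly as you do, by combining the asymptotic-dimension bound of at most~$2$ for $H$-minor-free classes~\citep{BBEGLPS}, the product theorem of \citet{BD06} and \citet{BDLM08}, and the passage from bounded weak diameter to bounded clustering under a maximum-degree assumption. Your additional care about the induction on~$d$ and the explicit degree bound $(\Delta+1)^d-1$ only makes the sketch more precise.
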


	The conditions that $\mathcal{F}_1$ and $\mathcal{F}_2$ have bounded asymptotic dimension
	and degree are quite strong, and instead we would like to obtain
	conditions only based on the fact that $\mathcal{F}_1$ and $\mathcal{F}_2$
	are themselves colourable with bounded clustering with few colours, and if possible, without the maximum degree assumption.

	%%%%%%%%%%%%%%%%%%%%%%%%%%%%%%%%%%%%%%%%%%%%%%
	\subsection{Fractional Colouring}\label{sec:frac}
	
	Let $G$ be a graph.  For $p,q\in\NN$ with $p\geq q$, a \defn{$(p\!:\!q)$-colouring} of $G$ is a function $f:V(G)\to \binom{C}{q}$ for some set $C$ with $|C|=p$. That is, each vertex is assigned a set of $q$ colours out of a palette of $p$ colours. 
	For $t\in\RR$, a \defn{fractional $t$-colouring} is a $(p\!:\!q)$-colouring for some $p,q\in\NN$ with $\frac{p}{q}\leq t$. 
	A \defn{$(p\!:\!q)$-colouring} $f$ of $G$ is \defn{proper} if $f(v)\cap f(w)=\emptyset$ for each edge $vw\in E(G)$. 
	
	The \defn{fractional chromatic number} of $G$ is 
	$$\fchi(G) := \inf\left\{ t \in\RR \,: \, \text{$G$ has a proper fractional $t$-colouring} \right\}.$$
	The fractional chromatic number is widely studied; see the textbook \citep{SU97}, which includes a proof of the fundamental property that $\fchi(G)\in\QQ$. 
	
	The next result relates $\fchi(G)$ and $\alpha(G)$, the size of the largest independent	set in $G$. 
	
	\begin{lem}[\citep{SU97}] 
		\label{fchiAlphaVertexTransitive}
		For every graph $G$,
		$$\fchi(G) \, \alpha(G) \geq |V(G)|,$$
		with equality if $G$ is vertex-transitive.
	\end{lem}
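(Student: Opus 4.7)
The plan is to prove the inequality by a double-counting argument, and to obtain equality for vertex-transitive graphs by constructing an explicit fractional colouring from a single maximum independent set together with the automorphism group.

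For the inequality $\fchi(G)\,\alpha(G) \ge |V(G)|$, I would take any proper $(p\!:\!q)$-colouring $f : V(G) \to \binom{C}{q}$ with $|C|=p$ and count incidences $(v,c)$ with $c \in f(v)$ in two ways. Summing over vertices gives $q|V(G)|$. On the other hand, for each colour $c \in C$, the set $f^{-1}(c):=\{v : c \in f(v)\}$ is an independent set in $G$ by properness of $f$, so $|f^{-1}(c)| \le \alpha(G)$; summing over colours gives at most $p\,\alpha(G)$. Hence $q|V(G)| \le p\,\alpha(G)$, i.e.\ $p/q \ge |V(G)|/\alpha(G)$, and taking the infimum over all proper $(p\!:\!q)$-colourings yields $\fchi(G) \ge |V(G)|/\alpha(G)$.

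For the reverse inequality when $G$ is vertex-transitive, I would use the automorphism group $\Gamma := \mathrm{Aut}(G)$ to produce a fractional colouring achieving the bound. Let $I$ be a maximum independent set in $G$ and consider the multiset of its translates $\{\sigma(I) : \sigma \in \Gamma\}$; every translate is independent, hence a candidate colour class. Using $\Gamma$ as the colour palette, assign to each vertex $v$ the set $f(v) := \{\sigma \in \Gamma : v \in \sigma(I)\}$. By vertex-transitivity, for any two vertices $v,w$ there is $\tau \in \Gamma$ with $\tau(v)=w$, and $\sigma \mapsto \tau\sigma$ is a bijection between $\{\sigma : v \in \sigma(I)\}$ and $\{\sigma : w \in \sigma(I)\}$, so every vertex lies in exactly the same number $q$ of translates; double counting $\sum_{\sigma\in\Gamma}|\sigma(I)| = |\Gamma|\,\alpha(G)$ then forces $q = |\Gamma|\,\alpha(G)/|V(G)|$. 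This gives a proper $(|\Gamma| : |\Gamma|\alpha(G)/|V(G)|)$-colouring, and hence $\fchi(G) \le |V(G)|/\alpha(G)$, matching the lower bound.

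The main delicate point is verifying that $q = |\Gamma|\,\alpha(G)/|V(G)|$ is an integer (which it is, since it is defined as a count of automorphisms) and that each colour class $\sigma(I)$ is indeed independent in $G$ (immediate from $\sigma$ being a graph automorphism). Everything else is routine bookkeeping, so I expect no serious obstacle beyond making the orbit-counting argument precise.
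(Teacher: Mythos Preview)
Your argument is correct and is exactly the standard proof: double-counting incidences for the lower bound, and averaging a maximum independent set over the automorphism group for the vertex-transitive equality case. Note, however, that the paper does not actually supply its own proof of this lemma; it is simply quoted from the textbook \citep{SU97}, so there is nothing in the paper to compare your approach against beyond the fact that you have reproduced the classical argument.
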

	
	%This implies that 
	%$$\lim_{k \to \infty} \fchi(G^{\boxtimes k})^{1/k}=|V(G)|/\Theta(G),$$ 
	%where $\Theta(G)$ is the Shannon capacity, which lies between $\alpha$ and $\alpha^*$, the fractional relaxation where each clique has weight at most 1.
	
	%$\alpha(G) = |V(G)| / \fchi(G)$
	%
	%$\Theta(G) = \lim_{d\rightarrow \infty } (\alpha (G^{\boxtimes d} ))^{1/d}$
	%
	%$\Theta(G) = \lim_{d\rightarrow \infty } ( |V(G^{\boxtimes d})| / \fchi(G^{\boxtimes d}) )^{1/d}$
	%
	%$\Theta(G) = \lim_{d\rightarrow \infty } |V(G)| / ( \fchi(G^{\boxtimes d}) )^{1/d}$
	%
	%$\Theta(G) = |V(G)| / \lim_{d\rightarrow \infty } ( \fchi(G^{\boxtimes d}) )^{1/d}$
	
	The following well-known observation shows an immediate connection between fractional colouring and strong products. 
	
	\begin{obs}
		\label{pqProduct}
		A graph $G$ is properly $(p\!:\!q)$-colourable if and only if $G\boxtimes K_q$ is properly $p$-colourable.
	\end{obs}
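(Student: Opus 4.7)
The plan is to unpack the definition of $G\boxtimes K_q$ and observe that it is precisely the ``$q$-blow-up'' of $G$: the vertex set is partitioned into $|V(G)|$ cliques of size $q$ (one per vertex of $G$, coming from the strong-product fibres $\{v\}\times V(K_q)$), and for every edge $vw\in E(G)$ every vertex in the fibre of $v$ is adjacent to every vertex in the fibre of $w$ (because $K_q$ is complete, so both the ``$i=j$'' and the ``$i\neq j$'' clauses in the definition of the strong product fire). Once this structural reformulation is in place, the equivalence with proper $(p\!:\!q)$-colourability becomes an almost direct translation between colour assignments and $q$-sets of colours.

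For the forward direction, I would start from a proper $(p\!:\!q)$-colouring $f:V(G)\to\binom{[p]}{q}$. For each $v$, pick an arbitrary bijection $\sigma_v:\{1,\dots,q\}\to f(v)$ and define a $p$-colouring $c$ of $G\boxtimes K_q$ by $c(v,i):=\sigma_v(i)$. Properness within a fibre follows because $\sigma_v$ is injective; properness across fibres corresponding to an edge $vw\in E(G)$ follows because $f(v)\cap f(w)=\emptyset$, so no colour appearing at $(v,i)$ can coincide with a colour appearing at $(w,j)$.

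For the converse, start from a proper $p$-colouring $c$ of $G\boxtimes K_q$ and define $f(v):=\{c(v,1),\dots,c(v,q)\}$. Since the fibre at $v$ induces a $K_q$, the values $c(v,1),\dots,c(v,q)$ are pairwise distinct, so $|f(v)|=q$ and hence $f:V(G)\to\binom{[p]}{q}$. If $vw\in E(G)$, then every $(v,i)$ is adjacent to every $(w,j)$ in $G\boxtimes K_q$, so $c(v,i)\neq c(w,j)$ for all $i,j$, giving $f(v)\cap f(w)=\emptyset$; thus $f$ is a proper $(p\!:\!q)$-colouring of $G$.

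There is no real obstacle here: the whole argument is a bookkeeping exercise. The only point that must be stated carefully, and which I would emphasise, is the characterisation of the edges of $G\boxtimes K_q$, because both adjacency clauses of the strong product collapse (thanks to $K_q$ being complete) into the clean statement ``$(v,i)\sim(w,j)$ iff either $v=w$ with $i\neq j$, or $vw\in E(G)$''. Once that is verified, the two implications above are each a couple of lines.
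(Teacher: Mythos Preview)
Your proof is correct and is exactly the standard argument. Note, however, that the paper does not actually prove this statement: it is recorded as a well-known observation (and the paper remarks that it is usually phrased via the lexicographic product $G[K_q]$, which coincides with $G\boxtimes K_q$), so there is no paper proof to compare against.
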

	
	\cref{pqProduct} is normally stated in terms of the lexicographic product $G[K_q]$, which equals $G\boxtimes K_q$ (although $G[H]\neq G\boxtimes H$ for other graphs $H$). 
	See \citep{KY02,Klavzar98} for results on the fractional chromatic number and the lexicographic product.

	%\begin{proof}
	%	First we prove the upper bound. 
	%As mentioned above $\fchi(G)=\frac{p}{q}$ for some $p,q\in\NN$, and
	%$\fchi(H)=\frac{s}{t}$ for some $s,t\in\NN$. Then $G\boxtimes K_q$ is properly $p$-colourable and $H\boxtimes K_t$ is properly $s$-colourable. Hence $\boxtimes K_q \boxtimes H \boxtimes K_t$, which is isomorphic to $G\boxtimes H \boxtimes K_{qt}$, is properly $ps$-colourable. Therefore $G\boxtimes H$ is properly $ps\!:\!qt$-colourable, and $\fchi(G\boxtimes H) \leq \frac{ps}{qt} = \fchi(G)\fchi(H)$.  
	%%See \citep{HIK11} for a proof that $\fchi(G[H]) = \fchi(G)\fchi(H)$.
	%
	%
	%
	%
	%
	%So $I\subseteq I_G\times I_H$, and the weight of $I$ is at most the weight of $I_G\times I_H$, which is
	%	\begin{align*}
		%	\sum_{x\in I_G} \sum_{v\in I_H} \alpha(x) \beta(v)
		%	= & \sum_{x\in I_G} \alpha(x) \sum_{v\in I_H} \beta(v)\\
		%	= & \sum_{x\in I_G} \alpha(x) \text{weight}(I_H)\\
		%	= & \text{weight}(I_H) \sum_{x\in I_G} \alpha(x)\\
		%	= & \text{weight}(I_H) \,\text{weight}(I_G)\\
		%	\leq & 1.
		%	\end{align*}
	%\comment{Write defective and clustered versions of the above.}
	%\end{proof}

	Fractional 1-defective colourings were first studied by \citet{FS15}; see \citep{GX16,MOS11,Klostermeyer02} for related results. Fractional clustered colourings were introduced by \citet{DS20} and subsequently studied by \citet{NSW22} and \citet{LW5}. 

The notions of clustered and defective colourings introduced
in \cref{sec:def} naturally extend to fractional colouring as follows. For a $(p\!:\!q)$-colouring $f:V(G)\to \binom{C}{q}$ of $G$ and for each colour $\alpha\in C$, the subgraph $G[ \{ v \in V(G): \alpha \in f(v) \} ]$ is called an \defn{$f$-monochromatic subgraph} or \defn{monochromatic subgraph} when $f$ is clear from the context. A connected component of an $f$-monochromatic subgraph is called an \defn{$f$-monochromatic component} or \defn{monochromatic component}. Note that $f$ is proper if and only if each $f$-monochromatic component has exactly one vertex. 
	
	A \defn{$(p\!:\!q)$-colouring} has \defn{defect} $c$ if every monochromatic subgraph has maximum degree at most $c$. A $(p\!:\!q)$-colouring has \defn{clustering} $c$ if every monochromatic component has at most $c$ vertices. 
	
	The \defn{fractional clustered chromatic number $\cfchi(\GG)$} of a graph class $\GG$ is the infimum of all $t\in\RR$ such that, for some $c\in\NN$, every graph in $\GG$ is fractionally $t$-colourable with clustering $c$. The \defn{fractional defective chromatic number $\dfchi(\GG)$} of a graph class $\GG$ is the infimum of $t>0$ such that, for some $c\in\NN$, every graph in $\GG$ is fractionally $t$-colourable with defect $c$.	
	
	\citet{Dvorak16} proved that every hereditary class admitting
	strongly sublinear separators and bounded maximum degree has
	fractional clustered chromatic number 1 (see also \cite{DS20}). Using, this result, \citet{LW5} proved that for every hereditary graph class $\GG$ admitting strongly sublinear separators, $$\cfchi(\GG)=\dfchi(\GG).$$ 
	\citet{LW5} also proved that for every monotone graph class $\GG$  admitting strongly sublinear separators and with $K_{s,t}\not\in\GG$, 
	$$\cfchi(\GG) = \dfchi(\GG) \leq \dchi(\GG) \leq s.$$
	
	\citet{NSW22} determined $\dfchi$ and $\cfchi$ for every minor-closed class. In particular, for every proper minor-closed class $\GG$, 
	$$\dfchi(\GG)=\cfchi(\GG)=\min\{k\in\NN:\exists
	n\,C_{k,n}\not\in\GG\},$$ where $C_{n,k}$ is a specific graph (see
	\cref{sec:ptc} for the definition of $C_{n,k}$ and for more details about this result).
	As an example, say $\GG_t$ is the class of $K_t$-minor-free graphs. \citet{Hadwiger43} famously conjectured that $\bigchi(\GG_t)=t-1$. It is even open whether $\fchi(\GG_t)=t-1$. 
	The best known upper bound is $\fchi(\GG_t) \leq 2t-2$ due to \citet{RS98}.
	\citet{EKKOS15} proved that $$\dchi(\GG_t)=t-1.$$
	It is open whether $\cchi(\GG_t)=t-1$. 
	The best known upper bound is $\cchi(\GG_t)\leq t+1$ due to \citet{LW2}. \citet{DN17} have announced that a forthcoming paper will prove that $\cchi(\GG_t)=t-1$. 
	The above result of \citet{NSW22} implies that 
	$$\dfchi(\GG_t)=\cfchi(\GG_t)=t-1.$$
	As another example, the result of \citet{NSW22} implies that the class of graphs embeddable in any fixed surface has fractional clustered chromatic number and fractional defective chromatic number 3.
	
	\subsection{Site percolation}\label{sec:perco}
	
	There is an interesting connection between percolation
	and fractional clustered colouring. Consider a graph $G$ and a real number $x>0$, and let $S$ be a
	random subset of vertices of $G$, such that $\Pr(v\in S)\ge x$ for every $v\in V(G)$. Then $S$ is a \defn{site percolation} of
	density at least $x$. If the events $(v\in S)_{v\in V(G)}$ are
	independent and $\Pr(v\in S)= x$ for every $v\in V(G)$, then $S$ is called
	a \defn{Bernouilli site percolation}, but in
	general the events $(v\in S)_{v\in V(G)}$  can be dependent. Each connected component of
	$G[S]$ is called a \defn{cluster} of $S$, and $S$ has
	\defn{bounded clustering} (for a family of graphs $G$) if all clusters have bounded size.
	An important problem in percolation theory is to understand when
	$S$ has finite clusters (when $G$ is infinite), or when $S$ has
	bounded clustering, independent of the size of $G$ (when $G$ is
	finite). Assume that all clusters of $S$ have bounded
	size almost surely. Then, by discarding the vanishing proportion of sets of
	unbounded clustering in the support of $S$, we obtain a probability distribution
	over the subsets of vertices of bounded clustering in $G$, such that
	each $v\in V(G)$ is in a random subset (according to the distribution)
	with probability at least $x-\epsilon$, for any $\epsilon>0$. If
	all graphs $G$ in some class $\GG$ satisfy this property (with
	uniformly bounded clustering), this implies that $\cfchi(\GG)\le
	\tfrac1{x-\epsilon}$, for any $\epsilon>0$, and thus  $\cfchi(\GG)\le
	\tfrac1{x}$. Conversely, if a class $\GG$ satisfies $\cfchi(\GG)\le
	\tfrac1{x}$, then this clearly gives a site percolation of bounded
	clustering for $\GG$, with density at least $x$.
	
	As an example, \citet{CGHV15} recently proved that in any cubic
	graph of sufficiently large (but constant) girth, there is a percolation of density at least
	$0.534$ in which all clusters are bounded almost surely. It follows
	that for this class of graphs, $\cfchi(\GG)\le \tfrac1{0.534}\le
	1.873$.
	
	Note that percolation in finite dimensional lattices (and in
	particular the critical probability at which an infinite cluster
	appears) is a well-studied topic in probability theory. Finite
	dimensional lattices themselves are easily expressed as a strong
	product of paths.

	%\comment{ALTERNATIVE DEFINITION \citep{NSW22}: For real $t \geq 1$, a graph $G$ is \defn{fractionally $t$-colourable with clustering $c$} if there exist $Y_1,Y_2, \ldots, Y_s \subseteq V(G)$ and $\alpha_1,\ldots,\alpha_s \in [0,1]$ such that\footnote{If $c=1$, then this corresponds to a (proper) fractional $t$-colouring, and if the $\alpha_i$ are integral, then this yields a $t$-colouring with clustering $c$.}:
		%	\begin{itemize}
			%		\item Every component of $G[Y_i]$ has at most $c$ vertices,
			%		\item $\sum_{i=1}^s \alpha_i \leq t$,
			%		\item $\sum_{i : v \in Y_i}\alpha_i \geq 1$ for every $v \in V(G)$.
			%	\end{itemize}	
		%The \defn{fractional clustered chromatic number $\cfchi(\GG)$} of a graph class $\GG$ is the infimum of $t\in\RR$ such that there exists $c=c(t,\GG)$ such that every $G \in \GG$ is   fractionally $t$-colourable with clustering $c$. \defn{Fractional defective chromatic number} $\dfchi(\GG)$ is defined in exactly the same way, except the condition on the component size is replaced by ``the maximum degree of $G[Y_i]$ is at most $d$''. }

	%%%%%%%%%%%%%%%%
	\subsection{Shannon Capacity}\label{sec:shannon}
	
	Motivated by connections to communications theory, \citet{Shannon56}
	defined what is now called the \defn{Shannon capacity} of  a graph $G$
	to be  $$\Theta(G)=\sup_{d}(\alpha (\boxtimes_d G))^{1/d}=\lim_{d\rightarrow \infty } (\alpha (\boxtimes_d G))^{1/d}.$$
	 \citet{Lovasz79} famously proved that $\Theta(C_5)=\sqrt{5}$. See
	\citep{AL06,Alon02,Alon98,Vesztergombi78,KM94,Vesztergombi81,Farber86,HR82,KM94,Klavzar93}
	for more results.
	
	By \cref{fchiAlphaVertexTransitive}, for any graph $G$ we have
	$\alpha(G) \ge |V(G)|/\fchi(G)$, with equality if $G$ is
	vertex-transitive. It follows that  for any integer $d\ge 1$, $$\alpha(\boxtimes_d G) \ge |V(\boxtimes_d G)|/\fchi(\boxtimes_d
	G)=|V(G)|^d/\fchi(\boxtimes_d G),$$ 
	with equality if $\boxtimes_d G$ is
	vertex-transitive, and in particular if $G$ itself is vertex-transitive.
	As a consequence, we have the
	following alternative definition of the Shannon capacity of
	vertex-transitive graphs in terms of the fractional chromatic number
	of their strong products.
	
	\begin{obs}
		\label{ShannonReformulation}
		For any graph $G$,  $$\Theta(G)\ge \frac{|V(G)|}{\inf_{d } (\fchi (\boxtimes_d G))^{1/d}}=\frac{|V(G)|}{\lim_{d\rightarrow
				\infty } (\fchi (\boxtimes_d G))^{1/d}},$$ with equality if $G$ is vertex-transitive.
	\end{obs}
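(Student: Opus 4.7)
The plan is to derive the bound and the equality case essentially as routine consequences of \cref{fchiAlphaVertexTransitive}, combined with a Fekete-type submultiplicativity argument to identify the infimum with the limit.

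First I would apply \cref{fchiAlphaVertexTransitive} to the graph $\boxtimes_d G$, using $|V(\boxtimes_d G)| = |V(G)|^d$, to obtain
$$\alpha(\boxtimes_d G) \;\ge\; \frac{|V(G)|^d}{\fchi(\boxtimes_d G)},$$
with equality if $\boxtimes_d G$ is vertex-transitive. Taking $d$-th roots gives $\alpha(\boxtimes_d G)^{1/d} \ge |V(G)|/\fchi(\boxtimes_d G)^{1/d}$, and then taking $\sup_d$ (or $\lim_d$) on both sides yields the inequality $\Theta(G) \ge |V(G)|/\inf_d \fchi(\boxtimes_d G)^{1/d}$ in the statement.

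Next I would establish the equality $\inf_d \fchi(\boxtimes_d G)^{1/d} = \lim_{d\to\infty} \fchi(\boxtimes_d G)^{1/d}$ via Fekete's lemma. The key submultiplicativity input is $\fchi(G_1 \boxtimes G_2) \le \fchi(G_1)\,\fchi(G_2)$, which is immediate: given a proper $(p_1\!:\!q_1)$-colouring $f_1$ of $G_1$ and a proper $(p_2\!:\!q_2)$-colouring $f_2$ of $G_2$, the map $(u,v) \mapsto f_1(u) \times f_2(v)$ is a proper $(p_1 p_2 \!:\! q_1 q_2)$-colouring of $G_1 \boxtimes G_2$, since any edge of $G_1 \boxtimes G_2$ projects to an edge or a loop in each factor but not a loop in both. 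Setting $a_d := \log \fchi(\boxtimes_d G)$, this gives $a_{m+n} \le a_m + a_n$, and Fekete's subadditive lemma yields $a_d/d \to \inf_d a_d/d$, which after exponentiation is the required equality of $\inf$ and $\lim$.

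Finally, for the equality when $G$ is vertex-transitive, I would invoke the standard fact that the strong product of vertex-transitive graphs is vertex-transitive: if $\varphi$ is an automorphism of $G$ sending $u$ to $v$, then the diagonal action $(x_1,\dots,x_d) \mapsto (\varphi(x_1),\dots,\varphi(x_d))$ is an automorphism of $\boxtimes_d G$, and these compose transitively across coordinates. Hence $\boxtimes_d G$ is vertex-transitive for every $d$, so \cref{fchiAlphaVertexTransitive} gives $\alpha(\boxtimes_d G) = |V(G)|^d/\fchi(\boxtimes_d G)$, turning each inequality above into an equality and producing the displayed formula.

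I do not expect any real obstacle; the only mildly technical step is the clean verification of $\fchi(G_1 \boxtimes G_2) \le \fchi(G_1)\,\fchi(G_2)$ that powers the Fekete argument, and the observation that vertex-transitivity is preserved under strong products.
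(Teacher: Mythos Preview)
Your proposal is correct and follows the same approach as the paper: the paper's justification is simply the paragraph preceding the observation, which applies \cref{fchiAlphaVertexTransitive} to $\boxtimes_d G$ and notes that vertex-transitivity of $G$ implies vertex-transitivity of $\boxtimes_d G$. You supply more detail than the paper does---in particular the explicit Fekete argument for $\inf_d=\lim_d$ (which the paper later proves as \cref{FractionalProduct}) and the vertex-transitivity of the product (where, incidentally, you want the coordinatewise product automorphism $(\varphi_1,\dots,\varphi_d)$ rather than the diagonal one).
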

	
	As a consequence, results on the Shannon capacity of graphs imply
	lower bounds (or exact bounds) on the fractional chromatic number of
	strong products of graphs.
	
	\subsection{Our results}\label{sec:results}
	
	We start by recalling basic results on the chromatic number of
	the product $G_1\boxtimes \cdots\boxtimes G_d$ in
	\cref{ProductColourings}: the chromatic number of the product is at
	most the product of the chromatic numbers. We show that the same holds
	for the fractional version and for the clustered version (for graph classes). While complete graphs show
	that the result on proper colouring is tight in general, other
	constructions are needed for (fractional) clustered chromatic
	number. In \cref{sec:ptc}, we show that for products of tree-closures, the
	(fractional) defective and clustered chromatic number is equal to the
	product of the (fractional) defective and clustered chromatic numbers.
	
	In \cref{sec:consistent}, we introduce consistent
	(fractional) colouring and use it to combine any proper
	$(p\!:\!q)$-colouring of a graph $G$ with a $(q\!:\!r)$-colouring of a
	graph $H$ with
	bounded clustering into a $(p\!:\!r)$-colouring of $G\boxtimes H$ of
	bounded clustering. Using consistent $(k+1\!:\!k)$-colourings of paths
	(and more generally bounded degree trees) with bounded clustering, we prove general results on the clustered chromatic number of the product of a graph
	with a path (and more generally a bounded degree tree, or a graph of
	bounded treewidth and maximum degree). We also study the fractional
	clustered chromatic number of graphs of bounded degree, showing that
	the best known lower bound for the non-fractional case also holds for
	the fractional relaxation.
	
	In \cref{sec:param}, we prove that many of our results on the clustered
	colouring of graph products can be extended to the broader setting of
	general graph parameters.
	
	\section{Basics}
	
	\subsection{Product Colourings}
	\label{ProductColourings}

	We start with the following folklore result about proper colourings of strong products; see the informative survey about proper colouring of graph products by \citet{Klavzar96}. 
	
	\begin{lem}
		\label{ChromaticNumber}
		For all graphs $G_1,\dots,G_d$, 
		$$\chi( G_1\boxtimes \cdots\boxtimes G_d) \leq \prod_{i=1}^d \chi(G_i).$$
	\end{lem}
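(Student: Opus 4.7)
The plan is to build a proper colouring of the product by assigning to each vertex the tuple of colours it receives in optimal proper colourings of the factors. Specifically, for each $i\in\{1,\dots,d\}$ fix a proper colouring $f_i:V(G_i)\to C_i$ with $|C_i|=\chi(G_i)$, and define
$$f:V(G_1\boxtimes\cdots\boxtimes G_d)\to C_1\times\cdots\times C_d,\qquad f(v_1,\dots,v_d):=(f_1(v_1),\dots,f_d(v_d)).$$
This uses at most $\prod_{i=1}^d\chi(G_i)$ colours, so the whole task reduces to verifying that $f$ is proper.

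The key step is to unpack the definition of the strong product: distinct vertices $(v_1,\dots,v_d)$ and $(w_1,\dots,w_d)$ are adjacent in $G_1\boxtimes\cdots\boxtimes G_d$ precisely when, in every coordinate $i$, either $v_i=w_i$ or $v_iw_i\in E(G_i)$. Since the two vertices are distinct there is some coordinate $j$ with $v_j\ne w_j$, and for that $j$ adjacency forces $v_jw_j\in E(G_j)$. Properness of $f_j$ then yields $f_j(v_j)\ne f_j(w_j)$, so the tuples $f(v_1,\dots,v_d)$ and $f(w_1,\dots,w_d)$ differ in coordinate $j$. Hence $f$ is proper, as required.

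Equivalently, one may first prove the two-factor bound $\chi(G\boxtimes H)\leq\chi(G)\,\chi(H)$ by the same construction and then induct on $d$ using associativity of $\boxtimes$; the inductive step is identical in content to the tuple construction above. There is essentially no obstacle: the only subtle point, and the one that distinguishes $\boxtimes$ from the Cartesian and direct products, is the ``equal or adjacent in each coordinate'' characterisation of adjacency, without which the argument would fail (indeed this is why the analogous bound is not the right one for $\CartProd$ or $\times$).
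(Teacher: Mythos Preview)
Your proof is correct and follows essentially the same approach as the paper: take a proper $\chi(G_i)$-colouring $\phi_i$ of each factor and colour $(v_1,\dots,v_d)$ by the tuple $(\phi_1(v_1),\dots,\phi_d(v_d))$, then observe that any edge of the strong product has $v_iw_i\in E(G_i)$ in some coordinate $i$, forcing the tuples to differ there. Your unpacking of the $d$-fold adjacency condition is a little more explicit than the paper's, but the argument is the same.
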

	
	\begin{proof}
		Let $\phi_i$ be a $\chi(G_i)$-colouring of $G_i$. Assign each vertex $(v_1,\dots,v_d)$ of $G_1\boxtimes \cdots\boxtimes G_d$ the colour $(\phi_1(v_1),\dots,\phi_d(v_d))$. If $(v_1,\dots,v_d)(w_1,\dots,w_d)$ is an edge of $G_1\boxtimes \cdots\boxtimes G_d$, then $v_iw_i$ is an edge of $G_i$ for some $i$, implying $\phi_i(v_i)\neq \phi_i(w_i)$ and 
		$G_1\boxtimes \cdots\boxtimes G_d$ is properly coloured with $\prod_{i=1}^d \chi(G_i)$ colours. 
	\end{proof}
	
	We have the following similar result for fractional colouring. 
	
	\begin{lem}
		\label{FractionalProduct}
		For all graphs $G_1,\dots,G_d$, 
		$$\fchi( G_1\boxtimes \cdots\boxtimes G_d) \leq \prod_{i=1}^d \fchi(G_i).$$
	\end{lem}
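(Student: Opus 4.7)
My plan is to mimic the proof of \cref{ChromaticNumber} for proper colourings, but with $(p\!:\!q)$-colourings in place of $p$-colourings. Since $\fchi$ is rational and attained on finite graphs, for each $i\in\{1,\dots,d\}$ I can pick integers $p_i,q_i$ with $\tfrac{p_i}{q_i}=\fchi(G_i)$ and a proper $(p_i\!:\!q_i)$-colouring $\phi_i:V(G_i)\to\binom{C_i}{q_i}$, where the palettes $C_1,\dots,C_d$ are pairwise disjoint sets with $|C_i|=p_i$.

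The key step is to define a colouring $\phi$ of $G_1\boxtimes\cdots\boxtimes G_d$ with palette $C_1\times\cdots\times C_d$ (so $p:=\prod_i p_i$ colours) by setting
\[
\phi(v_1,\dots,v_d) := \phi_1(v_1)\times\cdots\times\phi_d(v_d),
\]
which is a $q$-subset of the palette for $q:=\prod_i q_i$. Hence $\phi$ is a $(p\!:\!q)$-colouring.

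The verification of propriety is the heart of the argument, and it is where the definition of the strong product does the work. If $(v_1,\dots,v_d)(w_1,\dots,w_d)$ is an edge of $G_1\boxtimes\cdots\boxtimes G_d$, then by definition of $\boxtimes$ there exists some index $i$ with $v_iw_i\in E(G_i)$ (and for every other index $j$, either $v_j=w_j$ or $v_jw_j\in E(G_j)$). Since $\phi_i$ is proper, $\phi_i(v_i)\cap\phi_i(w_i)=\emptyset$, so no tuple $(c_1,\dots,c_d)$ can satisfy $c_j\in\phi_j(v_j)\cap\phi_j(w_j)$ for every $j$, and therefore $\phi(v_1,\dots,v_d)\cap\phi(w_1,\dots,w_d)=\emptyset$.

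This shows $\fchi(G_1\boxtimes\cdots\boxtimes G_d)\leq \tfrac{p}{q}=\prod_{i=1}^d\tfrac{p_i}{q_i}=\prod_{i=1}^d\fchi(G_i)$, as required. I do not foresee a genuine obstacle: the only subtlety is the standard fact that proper $(p\!:\!q)$-colourings of the $G_i$ that exactly witness $\fchi(G_i)$ exist on finite graphs, which follows from the rationality of the fractional chromatic number cited in \cref{sec:frac}; if one wished to avoid invoking this, an $\epsilon$-approximation followed by taking an infimum would work equally well.
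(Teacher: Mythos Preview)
Your proof is correct. The underlying construction is the same as the paper's---take the Cartesian product of the colour sets---but the paper packages it differently: it invokes \cref{pqProduct} to turn each proper $(p_i\!:\!q_i)$-colouring of $G_i$ into a proper $p_i$-colouring of $G_i\boxtimes K_{q_i}$, applies \cref{ChromaticNumber} to the product $(G_1\boxtimes K_{q_1})\boxtimes\cdots\boxtimes(G_d\boxtimes K_{q_d})$, then uses the isomorphism with $(G_1\boxtimes\cdots\boxtimes G_d)\boxtimes K_Q$ and \cref{pqProduct} again to recover a $(P\!:\!Q)$-colouring. Your route is more self-contained and makes the propriety check explicit; the paper's route is shorter because it reuses already-proved lemmas, at the cost of relying on the (easy but unstated) isomorphism $K_{q_1}\boxtimes\cdots\boxtimes K_{q_d}\cong K_Q$. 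Neither approach has any real advantage over the other.
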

	
	\begin{proof}
		$\fchi(G_i)=\frac{p_i}{q_i}$ for some $p_i,q_i\in\NN$. By \cref{pqProduct}, $\chi(G_i\boxtimes K_{q_i} )\leq p_i$. 
		Let $P:=\prod_i p_i$ and $Q:=\prod_i q_i$. 
		By \cref{ChromaticNumber}, 
		$\chi( ( G_1\boxtimes K_{q_1} ) \cdots ( G_d \boxtimes K_{q_d} ) ) \leq P.$
		Since $( G_1\boxtimes K_{q_1} ) \cdots ( G_d \boxtimes K_{q_d} ) \cong ( G_1\boxtimes \cdots \boxtimes G_d) \boxtimes K_Q$, we have
		$\chi( ( G_1\boxtimes \cdots \boxtimes G_d ) \boxtimes K_Q ) \leq P$. 
		By \cref{pqProduct} again, 
		$G_1\boxtimes \cdots \boxtimes G_d $ is $(P\!:\!Q)$-colourable, and 
		$\fchi( G_1\boxtimes \cdots \boxtimes G_d ) \leq P/Q = \prod_{i=1}^d \fchi(G_i)$.
	\end{proof}
	
	Equality holds in \cref{FractionalProduct} when $G_1,\dots,G_d$ are
	complete graphs, for example. However, equality does not always hold
	in \cref{FractionalProduct}. For example, if $G=H=C_5$ then
	$\fchi(C_5)=\frac{5}{2}$ but $\fchi(C_5\boxtimes C_5)\leq \chi(C_5
	\boxtimes C_5) \leq 5$, where a proper 5-colouring of $C_5\boxtimes
	C_5$ is shown in \cref{C5C5}. In fact, a simple case-analysis shows
	that $\alpha(C_5\boxtimes C_5)=5$, implying that $\fchi(C_5\boxtimes
	C_5)=\bigchi(C_5\boxtimes C_5)=5$ (since $\alpha(G)\, \fchi(G) \geq
	|V(G)|$ for every graph $G$). Note that using \cref{ShannonReformulation}, the
	classical result of \citet{Lovasz79} stating that
	$\Theta(C_5)=\sqrt{5}$ can be rephrased as $\fchi(\boxtimes_d
	C_5)=5^{d/2}$ for any $d\ge 2$.
	
	\begin{figure}[!h]
		\centering 
		\includegraphics{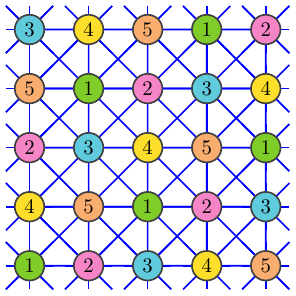}
		\caption{A proper 5-colouring of $C_5\boxtimes C_5$}
		\label{C5C5}
	\end{figure}

	%\citet{KY02,Klavzar98} 
	
	By \cref{FractionalProduct},  for all graphs $G_1$ and $G_2$, 
	$$\fchi(G_1 \boxtimes G_2) \le \fchi(G_1) \fchi(G_2),$$ with equality when $G_1$ or $G_2$ is a complete graph~\citep{KY02}\footnote{\citet{KY02} proved that $\fchi(G_1 \circ G_2) = \fchi(G_1) \fchi(G_2)$ for all graphs $G_1$ and $G_2$, where $\circ$ denotes the lexicographic product. Since $G\boxtimes K_t = G \circ K_t$, we have $\fchi(G \boxtimes K_t) = t\, \fchi(G)$ for every graph $G$.}. It is tempting therefore to hope for an analogous lower bound on $\fchi(G_1 \boxtimes G_2)$ in terms of $\fchi(G_1)$ and $\fchi(G_2)$ for all graphs $G_1$ and $G_2$. The following lemma dashes that hope. 
	
	\begin{lem}
		For infinitely many $n\in\NN$ there is an $n$-vertex graph $G$ such that 
		$$\fchi(G \boxtimes G) \le \frac{ (256+o(1))\log^4n}{n} \, \fchi(G)^2.$$
	\end{lem}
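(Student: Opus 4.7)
The plan is to take $G$ to be a vertex-transitive graph on $n$ vertices, so that the identity $\fchi(H)=|V(H)|/\alpha(H)$ for vertex-transitive $H$ (from \cref{fchiAlphaVertexTransitive}) applies both to $G$ and to $G\boxtimes G$. The latter is vertex-transitive whenever $G$ is, since a pair of automorphisms of $G$ induces one of $G\boxtimes G$. This reduces the lemma to estimates on independence numbers, via
\[
\frac{\fchi(G\boxtimes G)}{\fchi(G)^2}
\;=\;\frac{n^2/\alpha(G\boxtimes G)}{n^2/\alpha(G)^2}
\;=\;\frac{\alpha(G)^2}{\alpha(G\boxtimes G)}.
\]
It then suffices to exhibit, for infinitely many $n$, a vertex-transitive graph $G$ on $n$ vertices satisfying
$\alpha(G)\le (2+o(1))\log n$ and $\alpha(G\boxtimes G)\ge (1-o(1))\,n/(64\log^2 n)$; substituting into the displayed identity yields exactly $(256+o(1))\log^4 n/n$.

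A natural candidate for $G$ is a random Cayley graph on $\mathbb{Z}_n$ whose symmetric connection set is drawn uniformly at density $\tfrac12$. A standard first-moment computation (which goes through despite the correlations imposed by the group structure: an independent set $I$ forces the random connection set to avoid all of $I-I$, which has $\Theta(|I|^2)$ elements) gives $\alpha(G)\le(2+o(1))\log n$ almost surely, matching the usual Ramsey-type bound for $G(n,\tfrac12)$.

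The main obstacle is the lower bound $\alpha(G\boxtimes G)\ge \Omega(n/\log^2 n)$. Products of two independent sets of $G$ only yield $\alpha(G\boxtimes G)\ge \alpha(G)^2=O(\log^2 n)$, which misses the target by a polynomial factor; so a stronger construction, exploiting the structure of $G\boxtimes G$ beyond product sets, is needed. Two routes seem plausible. The first is Fourier-analytic: $G\boxtimes G$ is itself a Cayley graph on $\mathbb{Z}_n\times\mathbb{Z}_n$ with connection set $(\{0\}\cup S)\times(\{0\}\cup S)\setminus\{(0,0)\}$, and one can try to build large independent sets by choosing characters whose spectral weight lies off this set. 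The second is a direct probabilistic construction on $V(G)^2$: select a random subset parametrised by a random matching between the ``columns'' $V(G)\times\{x\}$ (which are induced copies of $G$), then apply a Lovász-Local-Lemma or second-moment argument to extract an independent subset of size $\Omega(n/\log^2 n)$. Establishing this estimate is the sole non-routine step; once it is in hand, the claim follows immediately from the displayed ratio above.
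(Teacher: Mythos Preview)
Your reduction via vertex-transitivity is sound, but the ``sole non-routine step'' you flag is in fact a genuine gap, and for the graph you chose it is probably unfillable. For a random Cayley graph on $\mathbb{Z}_n$ at density $\tfrac12$ there is no known lower bound on $\alpha(G\boxtimes G)$ beyond the trivial $\alpha(G)^2=\Theta(\log^2 n)$. Getting $\alpha(G\boxtimes G)\ge \Omega(n/\log^2 n)$ would amount to showing that such graphs have Shannon capacity at least $n^{1/2-o(1)}$, which is a well-known open-type question; neither of your two sketched routes (Fourier-analytic or Local-Lemma on a random matching) is known to work, and there is no reason to expect the bound to hold for this particular model.

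The paper avoids this difficulty altogether. It does not try to lower-bound $\alpha(G\boxtimes G)$; instead it quotes a construction of Alon and Orlitsky (Theorems~5 and~6 of \citep{AO95}) giving, for infinitely many $n$, a Cayley graph $G$ on $n$ vertices with $\chi(G\boxtimes G)\le n$ and $\alpha(G)\le (16+o(1))\log^2 n$. Then one simply writes
\[
\fchi(G\boxtimes G)\le \chi(G\boxtimes G)\le n
\quad\text{and}\quad
\fchi(G)=\frac{n}{\alpha(G)}\ge \frac{n}{(16+o(1))\log^2 n},
\]
using vertex-transitivity only for $G$, and the inequality $\fchi(G\boxtimes G)\le (256+o(1))\tfrac{\log^4 n}{n}\,\fchi(G)^2$ drops out. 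Note the trade-off: Alon--Orlitsky's $\alpha(G)$ bound is weaker than your hoped-for $(2+o(1))\log n$, but their $\chi(G\boxtimes G)\le n$ is far stronger than $\alpha(G\boxtimes G)\ge n/\log^2 n$, and---crucially---it is actually proved. Your outline would become a proof if you replaced the random Cayley graph by the Alon--Orlitsky graph and dropped the attempt to push $\alpha(G)$ down to $O(\log n)$.
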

	
	\begin{proof}
		\citet[Theorems~5~and~6]{AO95} proved that for infinitely many $n\in\NN$, there is a (Cayley) graph $G$ on $n$ vertices with $\chi(G \boxtimes G)\le n$ and $\alpha(G)\leq (16+o(1))\log^2 n$. By \cref{fchiAlphaVertexTransitive}, 
		\begin{align*}
			\fchi(G \boxtimes G) 
			%= \frac{n^2}{\alpha(G\boxtimes G)} 
			\leq \chi(G\boxtimes G)
			\leq n
			\leq\;  & \frac{ (256+o(1))\log^4n}{n} \left(\frac{n}{\alpha(G)}\right)^2 \\
			\le\; &  
			\frac{ (256+o(1))\log^4n}{n}\,\fchi(G)^2.\qedhere
		\end{align*}
	\end{proof}

	The next lemma generalises \cref{ChromaticNumber,FractionalProduct}. 
	
	\begin{lem}
		\label{ClusteredProductColouring}
		Let $G_1,\dots,G_d$ be graphs, such that $G_i$ is
		$(p_i\!:\!q_i)$-colourable with clustering $c_i$, for each
		$i\in[1,d]$. Then $G:= G_1\boxtimes \cdots\boxtimes G_d$ is $(\prod_i
		p_i:\prod_i q_i)$-colourable with clustering $\prod_ic_i$.
	\end{lem}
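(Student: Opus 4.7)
The plan is to simply take the ``tensor product'' of the given colourings. For each $i$, write $f_i: V(G_i) \to \binom{C_i}{q_i}$ with $|C_i| = p_i$, and set
\[
C := C_1 \times \cdots \times C_d, \qquad f(v_1, \dots, v_d) := f_1(v_1) \times \cdots \times f_d(v_d).
\]
Then $|C| = \prod p_i$ and $|f(v_1, \dots, v_d)| = \prod q_i$, so $f$ is a $(\prod p_i : \prod q_i)$-colouring of $G$. The remaining task is to bound the size of each monochromatic component of $f$.

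Next I would analyse one monochromatic subgraph at a time. Fix a colour $\alpha = (\alpha_1, \dots, \alpha_d) \in C$, and for each $i$ let $A_i := \{v \in V(G_i) : \alpha_i \in f_i(v)\}$. By definition of $f$, the $f$-monochromatic subgraph for $\alpha$ is induced on the set $A_1 \times \cdots \times A_d$. The key structural observation is that the induced subgraph of the strong product on a ``product set'' is itself a strong product:
\[
(G_1 \boxtimes \cdots \boxtimes G_d)[A_1 \times \cdots \times A_d] \;=\; G_1[A_1] \boxtimes \cdots \boxtimes G_d[A_d].
\]
This is immediate from the definition of $\boxtimes$, since adjacency only ever involves coordinate-wise comparisons.

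Now I would invoke the basic fact that the connected components of a strong product are exactly the strong products of connected components of the factors. Indeed, each $K_1 \boxtimes \cdots \boxtimes K_d$, where $K_i$ is a component of $G_i[A_i]$, is connected (one may alter a single coordinate at a time along a path within its component), and conversely any connected subgraph projects into a single component in each coordinate. By hypothesis, each component of $G_i[A_i]$ is a monochromatic component of $f_i$ and so has at most $c_i$ vertices; therefore every connected component of the $\alpha$-monochromatic subgraph of $G$ has at most $\prod_i c_i$ vertices. Since $\alpha$ was arbitrary, $f$ has clustering $\prod_i c_i$, as required.

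I do not anticipate any genuine obstacle: the argument is direct and requires only the product-set identity and the component decomposition of strong products. The only point to be mildly careful about is that one should verify the component decomposition of $\boxtimes$ rather than of $\Box$ or $\times$, since these products differ in general; but for the strong product the argument outlined above is straightforward.
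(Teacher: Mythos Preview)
Your proof is correct and is essentially the same as the paper's: both take the product colouring $f(v_1,\dots,v_d)=f_1(v_1)\times\cdots\times f_d(v_d)$ and argue that each monochromatic component for a colour $(\alpha_1,\dots,\alpha_d)$ is contained in $X_1\boxtimes\cdots\boxtimes X_d$ for monochromatic components $X_i$ of $f_i$. The only cosmetic difference is that you invoke the general fact that components of a strong product are products of components of the factors, whereas the paper verifies maximality of $X_1\boxtimes\cdots\boxtimes X_d$ directly by checking that every edge leaving it reaches a vertex missing the colour $(\alpha_1,\dots,\alpha_d)$.
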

	
	\begin{proof}
		For $i\in[1,d]$, let $\phi_i$ be a $(p_i\!:\!q_i)$-colouring of $G_i$ with clustering $c_i$.
		Let $\phi$ be the colouring of $G$, where each vertex
		$v=(v_1,\dots,v_d)$ of $G$ is coloured $\phi(v) := \{ (a_1,\dots,a_d):
		a_i \in \phi_i(v_i), i \in [1,d] \}$. So each vertex of $G$ is
		assigned a set of $\prod_i q_i$ colours, and there are $\prod_i p_i$
		colours in total. Let $X:= X_1\boxtimes\dots\boxtimes X_d$, where each
		$X_i$ is a monochromatic component of $G_i$ using colour $a_i$. Then
		$X$ is a monochromatic connected induced subgraph of $G$ using colour
		$(a_1,\dots,a_d)$. Consider any edge $(v_1,\dots,v_d)(w_1,\dots,w_d)$
		of $G$ with $(v_1,\dots,v_d)\in V(X)$ and $(w_1,\dots,w_d)\not\in
		V(X)$. Thus $v_iw_i\in E(G_i)$ and  $w_i\not\in V(X_i)$ for some
		$i\in\{1,\dots,d\}$. Hence $a_i\not\in \phi_i(w_i)$, implying
		$(a_1,\dots,a_d)\not\in \phi(w)$. Hence $X$ is a monochromatic
		component of $G$ using colour $(a_1,\dots,a_d)$. As $X$ contains at
		most $\prod_ic_i$ vertices, it follows that $\phi$ has clustering at
		most $\prod_ic_i$.
	\end{proof}

	\cref{ClusteredProductColouring} implies the following analogues of \cref{ChromaticNumber} for clustered colouring.

	\begin{thm}
		\label{ClusteredProductColouringClasses}
		For all graph classes $\GG_1,\dots,\GG_d$ 
		$$\cchi( \GG_1 \boxtimes \dots\boxtimes \GG_d) \leq \prod_{i=1}^d \cchi(\GG_i).$$
	\end{thm}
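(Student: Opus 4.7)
The plan is to deduce this directly from \cref{ClusteredProductColouring} by specializing $q_i = 1$ for each $i$.

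First, I would handle the trivial case: if $\cchi(\GG_i) = \infty$ for some $i$, then $\prod_i \cchi(\GG_i) = \infty$ and the inequality holds vacuously. So assume $k_i := \cchi(\GG_i) < \infty$ for every $i \in \{1,\dots,d\}$. By definition of the clustered chromatic number, for each $i$ there exists an integer $c_i \in \NN$ such that every $G_i \in \GG_i$ admits a $k_i$-colouring with clustering $c_i$. A $k_i$-colouring is exactly a $(k_i\!:\!1)$-colouring in the framework of \cref{ClusteredProductColouring}.

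Next, given any product $G_1 \boxtimes \cdots \boxtimes G_d$ with $G_i \in \GG_i$, I would apply \cref{ClusteredProductColouring} with $p_i = k_i$ and $q_i = 1$. This immediately yields a $\bigl(\prod_i k_i : \prod_i 1\bigr)$-colouring of $G_1 \boxtimes \cdots \boxtimes G_d$ with clustering $\prod_i c_i$; that is, an ordinary $\bigl(\prod_i k_i\bigr)$-colouring with clustering $\prod_i c_i$.

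Finally, since the constant $c := \prod_i c_i$ depends only on the classes $\GG_1,\dots,\GG_d$ and not on the particular choice of $G_i \in \GG_i$, every graph in $\GG_1 \boxtimes \cdots \boxtimes \GG_d$ admits a $\bigl(\prod_i k_i\bigr)$-colouring with clustering at most $c$. By definition, this proves
$$\cchi(\GG_1 \boxtimes \cdots \boxtimes \GG_d) \leq \prod_{i=1}^{d} k_i = \prod_{i=1}^d \cchi(\GG_i).$$
There is no real obstacle here: the entire content is packaged in \cref{ClusteredProductColouring}, and the only thing to notice is that the clustered chromatic number corresponds to the $q_i = 1$ slice of the $(p\!:\!q)$-coloured framework.
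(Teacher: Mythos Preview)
Your proposal is correct and matches the paper's own approach: the paper simply states that \cref{ClusteredProductColouringClasses} follows from \cref{ClusteredProductColouring}, and your argument is exactly the $q_i=1$ specialization that makes this implication explicit.
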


	It is interesting to consider when the naive upper bound in
	\cref{ClusteredProductColouringClasses} is tight.
	\cref{FracClosureProduct} below shows that this result is tight
	for products of closures of high-degree trees.

	Finally, note that \cref{ClusteredProductColouring} implies the following basic observation.
	
	\begin{lem}
		\label{BlowUp}
		If a graph $G$ is $k$-colourable with clustering $c$, then $G\boxtimes K_t$ is $k$-colourable with clustering $ct$.
	\end{lem}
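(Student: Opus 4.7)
The plan is to derive \cref{BlowUp} as a direct specialisation of \cref{ClusteredProductColouring} with $d=2$, where the second factor is chosen to absorb the blow-up $K_t$ ``for free''.

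First I would reinterpret the hypothesis: a $k$-colouring of $G$ with clustering $c$ is precisely a $(k\!:\!1)$-colouring of $G$ with clustering $c$, in the language of \cref{sec:frac}. Next I would handle the second factor: $K_t$ admits the trivial $(1\!:\!1)$-colouring in which every vertex receives the unique colour. Since $K_t$ is connected, there is a single monochromatic component, of size $t$, so this $(1\!:\!1)$-colouring has clustering exactly $t$.

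Then I would feed these two colourings into \cref{ClusteredProductColouring} with $(p_1,q_1,c_1)=(k,1,c)$ and $(p_2,q_2,c_2)=(1,1,t)$. The lemma produces a $(k\cdot 1\!:\!1\cdot 1)$-colouring of $G\boxtimes K_t$ with clustering $c\cdot t$, i.e.\ a $k$-colouring with clustering $ct$, which is exactly the conclusion.

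There is no real obstacle here: once $K_t$ is viewed as trivially $(1\!:\!1)$-coloured, the statement is a one-line instance of \cref{ClusteredProductColouring}. The only thing to verify carefully is that the monochromatic components of the product colouring are indeed of the form $M\boxtimes K_t$, where $M$ is a monochromatic component of $G$; this is immediate from the product structure of monochromatic components established in the proof of \cref{ClusteredProductColouring}, since $K_t$ is connected and contributes the full factor $t$ to the size.
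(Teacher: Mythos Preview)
Your proposal is correct and is exactly the approach the paper takes: it simply asserts that \cref{BlowUp} follows from \cref{ClusteredProductColouring}, and your choice of parameters $(p_1,q_1,c_1)=(k,1,c)$ and $(p_2,q_2,c_2)=(1,1,t)$ for the trivial $(1\!:\!1)$-colouring of $K_t$ is the intended specialisation.
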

	
	More generally, \cref{ClusteredProductColouringClasses} implies:
	
	\begin{lem}
		\label{FractionalBlowUp}
		If a graph $G$ is $(p\!:\!q)$-colourable with clustering $c$, then $G\boxtimes K_t$ is $(p\!:\!q)$-colourable with clustering $ct$.
	\end{lem}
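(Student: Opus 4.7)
The plan is to obtain \cref{FractionalBlowUp} as an immediate specialisation of \cref{ClusteredProductColouring} with $d=2$. I would take $G_1 := G$, equipped with its hypothesised $(p\!:\!q)$-colouring of clustering $c$, and take $G_2 := K_t$ together with the trivial $(1\!:\!1)$-colouring that assigns every vertex of $K_t$ the sole available colour. Since $K_t$ is connected on $t$ vertices, this $(1\!:\!1)$-colouring has clustering exactly $t$. Feeding the pair into \cref{ClusteredProductColouring} then yields a $(p\cdot 1 \!:\! q\cdot 1) = (p\!:\!q)$-colouring of $G\boxtimes K_t$ with clustering at most $c\cdot t$, which is exactly the stated conclusion.

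There is essentially no obstacle here. The only mildly non-obvious step is the deliberate decision to give $K_t$ a \emph{single-colour} $(1\!:\!1)$-colouring, a choice that would be useless in any proper-colouring context, but which is precisely what preserves both parameters $p$ and $q$ in the product while absorbing the ``blow-up factor'' $t$ entirely into the clustering. If one preferred a self-contained verification rather than citing \cref{ClusteredProductColouring}, one could instead note directly that assigning each vertex $(v,x)\in V(G)\times V(K_t)$ the colour set $\phi(v)$ (where $\phi$ is the given $(p\!:\!q)$-colouring of $G$) produces monochromatic subgraphs of the form $M\boxtimes K_t$ for $M$ a $\phi$-monochromatic subgraph of $G$; since $K_t$ is connected, each such monochromatic component has exactly $t\cdot|V(M)| \le ct$ vertices.
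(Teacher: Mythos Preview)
Your proposal is correct and matches the paper's own reasoning: the paper simply states that \cref{FractionalBlowUp} follows from the product-colouring lemma (the paper cites \cref{ClusteredProductColouringClasses}, but the underlying mechanism is exactly \cref{ClusteredProductColouring} with $G_2=K_t$ and the trivial $(1\!:\!1)$-colouring, as you describe). Your explicit observation that the single-colour $(1\!:\!1)$-colouring of $K_t$ has clustering $t$ and preserves both $p$ and $q$ in the product is precisely the point.
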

	
	\section{Products of Tree-Closures}\label{sec:ptc}
	
	This section presents examples of graphs that show that the naive upper bound in  \cref{ClusteredProductColouringClasses} is tight. 

        The \emph{depth} of a node in a rooted tree is its distance to
        the root plus one.
	For $k,n\in\NN$, let $T_{k,n}$ be the rooted tree in which
        every leaf is at depth $k$, and every non-leaf has $n$ children. Let $C_{k,n}$ be the graph obtained from $T_{k,n}$ by adding an edge between every ancestor and descendant (called the \defn{closure} of $T_{k,n}$). Colouring each vertex by its distance from the root gives a $k$-colouring of $C_{k,n}$, and any root-leaf path in $C_{k,n}$ induces a $k$-clique. So $\chi(C_{k,n})=k$.
	
	The class $\CC_k:= \{ C_{k,n} : n \in \NN\}$ is important for
        defective and clustered colouring, and is often called the
        `standard' example. It is well-known and easily proved (see
        \citep{WoodSurvey})
        that $$\dchi(\CC_k)=\cchi(\CC_k)=\bigchi(\CC_k)=k.$$

	\citet{NSW22} extended this result (using a result of \citet{DS20}) to the setting of defective and clustered fractional chromatic number by showing that
	$$\dfchi(\CC_k)= \cfchi(\CC_k) = \bigchi^f(\CC_k) = \dchi(\CC_k)=\cchi(\CC_k)=\bigchi(\CC_k)=k.$$
	Here we give an elementary and self-contained proof of this
        result. In fact, we prove the following generalisation in
        terms of strong products.  This shows that
        \cref{ClusteredProductColouringClasses} is tight,
% for all $k_1,\dots,k_d$,
        even for fractional colourings. 

	\begin{thm}
		\label{FracClosureProduct}
		For all $k,d\in\NN$, if $\GG := \boxtimes_d \CC_{k}$ then 
		$$ \dfchi(\GG) = \cfchi(\GG) = \fchi(\GG) =\dchi( \GG ) = \cchi( \GG ) = \bigchi( \GG ) =k^d.$$
    \end{thm}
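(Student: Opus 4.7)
\emph{Proof proposal.} The six parameters satisfy
$$\dfchi(\GG)\le\min\{\cfchi(\GG),\dchi(\GG)\},\quad \cfchi(\GG)\le\fchi(\GG)\le\chi(\GG),\quad \dchi(\GG)\le\cchi(\GG)\le\chi(\GG),$$
so it is enough to prove the upper bound $\chi(\GG)\le\prod_i k_i$ and the lower bound $\dfchi(\GG)\ge\prod_i k_i$. The upper bound is immediate: colouring each vertex of $C_{k,n}$ by its depth in $T_{k,n}$ shows $\chi(C_{k,n})=k$, and then \cref{ChromaticNumber} yields $\chi(G_1\boxtimes\dots\boxtimes G_d)\le\prod_i\chi(G_i)=\prod_i k_i$ for any $G_i\in\CC_{k_i}$.

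For the lower bound I would argue by induction on $d$. The base case $d=1$ is the statement $\dfchi(\CC_k)\ge k$. This is proved by induction on $k$: given a $(p\!:\!q)$-colouring of $C_{k,n}$ with defect $c$, the root $r$ carries $q$ colours, so by the defect hypothesis at most $cq$ descendants of $r$ use any of these colours; as soon as $n>cq$, pigeonhole supplies a child subtree $C_{k-1,n}$ on which no vertex uses $\phi(r)$, so that subtree is $(p-q\!:\!q)$-coloured with defect $c$ and the inductive hypothesis gives $p-q\ge(k-1)q$. Choosing $n$ appropriately in terms of $c$ and $k$ forces $p/q\ge k$ for every admissible $(p,q)$, so $\dfchi(\CC_k)\ge k$.

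For the inductive step, write $\GG=\CC_{k_1}\boxtimes\HH$ with $\HH=\CC_{k_2}\boxtimes\dots\boxtimes\CC_{k_d}$; by the outer induction $\dfchi(\HH)=\prod_{i\ge 2}k_i$. I would then establish the multiplicativity statement $\dfchi(\CC_{k_1}\boxtimes\HH)\ge k_1\cdot\dfchi(\HH)$ by a second induction on $k_1$. Given a $(p\!:\!q)$-colouring $\phi$ of $G=C_{k_1,n_1}\boxtimes H$ (with $H\in\HH$) of defect $c$, the ``root layer'' $\{r_1\}\times V(H)$ is an induced copy of $H$, and the restriction of $\phi$ there is a $(p\!:\!q)$-colouring of $H$ of defect at most $c$; by the outer induction (taking $n_2,\dots,n_d$ large and $H$ witnessing $\dfchi(\HH)$ up to $\epsilon$), the set $C_R$ of colours appearing on this layer satisfies $|C_R|\ge(\dfchi(\HH)-\epsilon)q$. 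Call a vertex of a coord-$1$ child subtree \emph{bad} if it uses some colour in $C_R$. I would then bound the total number of bad vertices, in terms of $c$, $q$, and $|V(H)|$, using the defect constraints at the root-layer vertices (each $(r_1,h)\in V_\alpha$ contributes at most $c$ to the count of child-layer $\alpha$-vertices in its closed $H$-neighbourhood, summed over $h\in V_\alpha^{(r_1)}$) together with the universality of $r_1$ in $C_{k_1,n_1}$. Taking $n_1$ larger than this bound yields, by pigeonhole, a coord-$1$ child subtree $T_{1,j}$ such that $T_{1,j}\boxtimes H\cong C_{k_1-1,n_1}\boxtimes H$ is coloured entirely outside $C_R$, i.e.\ by a $(p-|C_R|\!:\!q)$-colouring of defect $c$. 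The inductive hypothesis on $k_1$ applied to $\CC_{k_1-1}\boxtimes\HH$ gives $p-|C_R|\ge(k_1-1)\dfchi(\HH)q-o(q)$, and combining with $|C_R|\ge(\dfchi(\HH)-\epsilon)q$ yields $p/q\ge k_1\dfchi(\HH)-o(1)=\prod_i k_i-o(1)$.

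The main obstacle is the bookkeeping in the bad-vertex bound: colours in $C_R$ that do not lie in $\phi(r_1,r_H)$ are not directly constrained by the super-root, and the argument has to balance the defect charges at root-layer vertices against the possibility that some $V_\alpha^{(r_1)}$ fails to dominate $H$ in $H$-distance one. Handling this correctly, so that the estimate on the total number of bad vertices depends only on $c,q$ and $|V(H)|$ (and not on $n_1$), is what makes Step~2 work and thereby drives the induction.
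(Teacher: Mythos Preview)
Your outline has two genuine gaps, one of which you flag yourself without resolving.

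\textbf{The circularity in ``choose $n>cq$''.} In the definition of $\dfchi$, the integers $p,q$ in a witnessing $(p\!:\!q)$-colouring may depend on the graph, hence on $n$. Your pigeonhole step in the $d=1$ base case (``at most $cq$ descendants share a root colour, so if $n>cq$ some child subtree is clean'') therefore cannot be triggered by a choice of $n$ made in advance ``in terms of $c$ and $k$'': an adversary can simply take $q_n\geq n/c$. The same issue recurs in the inductive step, where you need $n_1$ larger than a quantity involving $q$ and $|V(H)|$. The paper avoids this by replacing pigeonhole with averaging: for each vertex $v$ let $Y_v$ be the union of the colour-sets along its ``ancestor box'', and prove by induction on the total depth $s$ that $\sum_{v\text{ at depth }s}|Y_v|\geq (s+1)qn^s-O(qn^{s-1})$. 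The $q$ then cancels in the final inequality $p\cdot n^{K-d}\geq \sum_v|Y_v|$, yielding $p/q\geq K-O(1/n)$ with the $O(\cdot)$ depending only on $K,d,c$.

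\textbf{The bad-vertex bound in the inductive step does not exist.} You correctly identify ``the main obstacle'', but it is fatal to the scheme, not just bookkeeping. Fix $\alpha\in C_R$ and let $(r_1,h_1),\dots,(r_1,h_m)$ be the root-layer $\alpha$-vertices. In $C_{k_1,n_1}\boxtimes H$, a vertex $(v,h')$ with $v\neq r_1$ is adjacent to $(r_1,h_i)$ if and only if $h'\in N_H[h_i]$ (the universality of $r_1$ helps in the first coordinate, but adjacency still requires closeness in $H$). Hence if $h'\notin\bigcup_i N_H[h_i]$, the defect constraint at root-layer vertices says nothing whatsoever about whether $\alpha\in\phi(v,h')$; for each such $h'$ there are at least $n_1$ choices of $v$, and nothing prevents all of them from carrying $\alpha$. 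So the number of bad vertices is not bounded independently of $n_1$, and no clean coord-$1$ child subtree can be guaranteed. The paper sidesteps this entirely by \emph{not} inducting on $d$: it treats all $d$ coordinates symmetrically, moving one coordinate at a time from a vertex $u$ to a child, and uses only that every child is adjacent to every vertex of the ancestor box $X_u$---which is exactly the adjacency that your product-with-$H$ decomposition destroys.
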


\begin{proof}
Let $K:=k^d$. It follows from the definitions that 
		$\dfchi(\GG)\leq \dchi(\GG) \leq \chi(\GG)=K$ and 
		$\dfchi(\GG)\leq \cfchi(\GG) \leq \cchi(\GG) \leq \chi(\GG)=K$ and
		$\dfchi(\GG)\leq \bigchi^f(\GG) \leq \chi(\GG)=K$. 
		Thus it suffices to prove that $\dfchi(\GG)\geq K$. Recall that $\dfchi(\GG)$ is the infimum of all $t\in\RR$ such that, for some $c\in\NN$, for every $G\in\GG$ there exists $p,q\in\NN$ such that $p\leq tq$ and $G$ is $(p\!:\!q)$-colourable with defect $c$. 
		
		Suppose for the sake of contradiction that $\dfchi(\GG)\le K-\epsilon$, for some $\epsilon>0$. It follows that there
                exists $c\in\NN$ such that for every integer
                $n$ there exist $p,q\in\NN$ such that $p\leq
                (K-\epsilon)q$ and $\boxtimes_d C_{k,n}$ is $(p\!:\!q)$-colourable with defect $c$.

For $n\in \NN$, consider the graph $G=\boxtimes_d C_{k,n}$ and a
$(p\!:\!q)$-colouring of $G$ with defect $c$ such that $p\leq
(K-\epsilon)q$. We will prove that for fixed $k$, $d$, and $\epsilon$, the value of $c$ must be at least linear in $n$. Since $n$ can be chosen to be arbitrary, this immediately yields the desired contradiction.

Each vertex $x$ of $G$ is a  $d$-tuple $(x_1,\ldots,x_d)$, where each
$x_i$ is a vertex of $C_{k,n}$. Whenever we mention ancestors,
descendants, leaves and the depth of vertices in $C_{k,n}$, these terms refer to the corresponding notions in the spanning subgraph $T_{k,n}$ of $C_{k,n}$. Note that distinct vertices
$x=(x_1,\ldots,x_d)$ and $y=(y_1,\ldots,y_d)$ are adjacent in $G$ if
and only if for each $i\in[d]$, $x_i$ is an ancestor of $y_i$ or
$y_i$ is an ancestor of $x_i$ in $C_{k,n}$  (where we adopt the convention
that every vertex is an ancestor and descendant of itself).

For each $k$-tuple of non-negative integers $s=(s_1,\ldots,s_k)$ such that $\sum_{i=1}^k s_i=d$, define $V_s$ to be the set of vertices $x=(x_1,\ldots,x_d)$ of $G$ such that for each
$i\in [k]$, there are precisely $s_i$ indices $j\in [d]$ such
that $x_j$ has depth $i$ in $C_{k,n}$. Since $C_{k,n}$ contains $n^{i-1}$
vertices at depth $i$ (for each $i\in [d]$) and since $\binom{a}{b}\leq 2^a$, 
$$|V_s|={d\choose s_1}{d-s_1\choose s_2}\cdots {d-s_1-\cdots -s_{k-1}\choose s_k}\cdot n^{s_2}n^{2s_3}\cdots n^{(k-1)s_k}\le 2^{dk}\cdot n^{\sum_{i=1}^k  (i-1)s_i}.$$

Let $V^*:= V_{(0,\ldots,0,d)}$; that is, $V^*$ is the
set of vertices $x=(x_1,\ldots,x_d)$ of $G$ such that $x_1,\ldots,x_d$ are leaves of $C_{k,n}$. Note that
$|V^*|=n^{d(k-1)}$.  

For each vertex $x=(x_1,\ldots,x_d)\in V^*$, let $Q(x)$ be the set of vertices $y=(y_1,\ldots,y_d)$ of  $G$
such that for each $i\in [d]$, $y_i$ is an ancestor of $x_i$ in
$C_{k,n}$. By the definition of $G$, $Q(x)$ is a clique of
size $k^d=K$ (including $x$). 
Let $S_1,\ldots,S_K$ be the sets of colours assigned to
the elements of $Q(x)$ (so each $S_i$ is a $q$-element subset of $[p]$, with $p\le (K-\epsilon)q$). We claim that there are indices $i<j$ such that $|S_i\cap S_j|\ge \tfrac{\epsilon}{K^2}\cdot q$. If not,  for each $i\in[K]$, $S_i$
has at least $q-(i-1)\tfrac{\epsilon}{K^2}\cdot q$ elements not in
$S_1,S_2,\ldots S_{i-1}$. Thus 
$$|\bigcup_{i=1}^K S_i| \geq 
\sum_{i=1}^K (q-(i-1)\tfrac{\epsilon}{K^2}\cdot q )
%=Kq- \tfrac{\epsilon}{K^2}\cdot q  \sum_{i=1}^K (i-1)
%=Kq- \tfrac{\epsilon}{K^2}\cdot q \frac{K(K-1)}{2}
%>Kq- \epsilon\cdot q 
>Kq-\tfrac{K^2}2\cdot \tfrac{\epsilon}{K^2}\cdot
q
>(K-\epsilon)q\ge p,$$ 
which is a contradiction. Thus there exist distinct vertices $u,v\in Q(x)$ whose sets of colours  intersect in at least
$\tfrac{\epsilon}{K^2}\cdot q$ elements. Assume without loss of generality
that $u\in V_{s}$ and $v\in V_t$, and the sequence $s$ precedes $t$ in
lexicographic order. Orient the edge $uv$ from $u$ to $v$ and \emph{charge} $x$ to the arc $(u,v)$.

We now bound the number of vertices $x=(x_1,\ldots,x_d)\in V^*$ that
are charged to a given arc $(u,v)$, with $u=(u_1,\ldots,u_d)\in V_s$ and $v=(v_1\ldots,v_d)\in V_t$, where $s$ precedes $t=(t_1,\ldots,t_k)$ in lexicographic order. By definition, if $x$ is charged to $(u,v)$, each
$x_i$ is a descendant of $u_i$ and $v_i$ (and in particular $u_i$ and
$v_i$ are also in ancestor relationship). Each vertex at
depth $i$ in $C_{k,n}$ has precisely $n^{k-i}$ descendants that are
leaves in $C_{k,n}$. Thus (considering only $v_1,\dots,v_d$), 
at most 
$$ \prod_{i=1}^k n^{t_i(k-i)} =n^{\sum_{i=1}^kt_i(k-i)}$$ 
vertices of $V^*$ are charged to $(u,v)$.

We claim that there is an index $i\in [d]$, such that $u_i$ is a
strict descendant of $v_i$. Since $x_i$ is a descendant of $u_i$, it
follows that the bound above can be divided by a factor $n$, and thus at most $n^{-1+\sum_{i=1}^kt_i(k-i)}$ vertices of $V^*$ are charged to $(u,v)$. 
To prove the claim, consider first the $t_1$ indices $i\in [d]$ such that $v_i$ has depth 1. If $u_i$ has depth greater than 1 for one of these indices, then the desired property holds. So we may assume that all the corresponding $u_i$'s also have depth 1. Since $s$ precedes $t$ in lexicographic order, $s_1\le t_1$ and
thus $s_1=t_1$. It follows that for each $i\in [d]$, $u_i$ has depth 1
if and only if $v_i$ has depth 1. By considering the $t_2$ indices
$i$ such that $v_i$ has depth 2, the same reasoning shows that for
each $i\in [d]$, $v_i$ has depth 2 if and only if $u_i$ has depth
2. By iterating this argument, for each $i\in [d]$ and
$j\in [k]$, $v_i$ has depth $j$ if and only if $u_i$ has depth
$j$. Since $u_i$ and $v_i$ are ancestors for each $i\in [d]$, we have that
$u=v$, which is a contradiction. It follows that some $u_i$ is a strict ancestor of $v_i$, and thus (as argued above) at most $n^{-1+\sum_{i=1}^kt_i(k-i)}$ vertices of $V^*$ are charged to $(u,v)$.

Each vertex of $V^*$ is charged to some arc $(u,v)$, where $u$ and $v$
share at least $\tfrac{\epsilon}{K^2}\cdot q$ colours. We claim that
for each $v\in V^*$, there are at most $cK^2/\epsilon$ such arcs
$(u,v)$. If not, the $q$ colours of $v$ must appear (with repetition)
more than $\tfrac{\epsilon}{K^2}\cdot q \cdot cK^2/\epsilon=cq$ times
in the neighbourhood of $v$. By the pigeonhole principle some colour
of $v$ appears more than $c$ times in the neighbourhood of $v$, which
contradicts the assumption that the colouring has defect at most $c$.

For each vertex $v\in V_t$, where $t=(t_1,\ldots,t_k)$, and for each of the at most $cK^2/\epsilon$ arcs $(u,v)$ as above, we have proved that  at most  $n^{-1+\sum_{i=1}^kt_i(k-i)}       $ vertices of $V^*$ are
charged to $(u,v)$. Since $|V_t|\le 2^{dk}\cdot n^{\sum_{i=1}^k
  (i-1)t_i}$ and $\sum_{i=1}^kt_i=d$, at most $$c\tfrac{K^2}{\epsilon}\cdot2^{dk}\cdot n^{\sum_{i=1}^k
  (i-1)t_i}\cdot n^{-1+\sum_{i=1}^kt_i(k-i)} =c\tfrac{K^2}{\epsilon}\cdot 2^{dk}\cdot n^{-1+\sum_{i=1}^k(k-1)t_i}=c\tfrac{K^2}{\epsilon}\cdot2^{dk}\cdot n^{d(k-1)-1}$$ vertices of $V^*$ are charged to an arc $(u,v)$. 
Since there are at most $(d+1)^k$ possible $k$-tuples of integers
$t=(t_1,\ldots,t_k)$ with $\sum_{i=1}^kt_i=d$, it follows that
$$n^{d(k-1)}=|V^*|\le (d+1)^k c\cdot \tfrac{K^2}{\epsilon}\cdot2^{dk}\cdot n^{d(k-1)-1} ,$$ and
thus $c\ge n\cdot \tfrac{\epsilon}{k^{2d} (d+1)^k 2^{dk}}$, as desired.
\end{proof}

	Let $\STAR$ be the class of all star graphs; that is, $\STAR:=\{K_{1,n}:n\in\NN\}$. 
	Since $K_{1,n} \cong C_{2,n}$, \cref{FracClosureProduct} implies:
	
	\begin{cor}
		\label{StarProduct}
		For $d\in\mathbb{N}$, let $\STAR^d$ be the class of all $d$-dimensional strong products of star graphs. Then 
		$$\dfchi(\STAR^d) = \cfchi(\STAR^d) = \fchi(\STAR^d) = \dchi( \STAR^d) = \cchi( \STAR^d ) = \bigchi( \STAR^d ) = 2^d.$$
	\end{cor}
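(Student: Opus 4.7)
The plan is to observe that this statement is essentially a direct specialisation of \cref{FracClosureProduct}, and the proof reduces to identifying the class $\STAR$ with $\CC_2$ and then substituting $k_1=\dots=k_d=2$.

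First I would verify the isomorphism $K_{1,n} \cong C_{2,n}$ explicitly. Recall that $T_{2,n}$ is the rooted tree of depth $k-1=1$ in which the root has $n$ children; these children are the leaves. Taking its closure $C_{2,n}$ adds edges between ancestors and descendants, but since the only ancestor-descendant pairs are already (root, leaf) edges of $T_{2,n}$, no new edges appear. Hence $C_{2,n}$ is precisely the star with the root as centre and $n$ leaves, i.e.\ $K_{1,n}$. Consequently $\STAR = \CC_2$ as graph classes, and therefore
$$\STAR^d = \CC_2 \boxtimes \cdots \boxtimes \CC_2 \quad (d \text{ factors}).$$

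Now I would apply \cref{FracClosureProduct} with $k_1=\dots=k_d=2$. That theorem gives
$$\dfchi(\STAR^d) = \cfchi(\STAR^d) = \fchi(\STAR^d) = \dchi(\STAR^d) = \cchi(\STAR^d) = \bigchi(\STAR^d) = \prod_{i=1}^d k_i = 2^d,$$
which is exactly the desired conclusion.

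There is really no substantive obstacle here: the content lives entirely in \cref{FracClosureProduct}, and the only thing to check is the (immediate) identification $K_{1,n}\cong C_{2,n}$. The one cosmetic subtlety worth a sentence in the write-up is that in the definition of $\STAR$ we only include stars $K_{1,n}$ for $n\in\NN$ rather than all of $\CC_2$, but since the bound in \cref{FracClosureProduct} is achieved in the limit as $n\to\infty$ and the inequalities $\dfchi \leq \cdots \leq \chi \leq 2^d$ hold for every fixed star from \cref{ClusteredProductColouring}, the restriction to stars changes nothing.
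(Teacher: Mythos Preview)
Your proposal is correct and matches the paper's approach exactly: the paper simply notes that $K_{1,n}\cong C_{2,n}$ and invokes \cref{FracClosureProduct} with $k_1=\cdots=k_d=2$. Your final paragraph about a ``cosmetic subtlety'' is unnecessary, since $\CC_2=\{C_{2,n}:n\in\NN\}=\{K_{1,n}:n\in\NN\}=\STAR$ literally as classes, so no additional argument is needed.
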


	\section{Consistent colourings}\label{sec:consistent}

	A $(p\!:\!q)$-colouring $\alpha$ of a graph $G$ is \defn{consistent} if for each vertex $x\in V(G)$, there is an ordering $\alpha_x^1,\dots,\alpha_x^q$ of $\alpha(x)$, such that $\alpha_x^{i} \neq \alpha_y^{j}$ for each edge $xy\in E(G)$ and for all distinct $i,j\in[1,q]$. For example, the following $(4\!:\!3)$-colouring of a path is consistent:
	$$\begin{array}{c} 0\\ 1\\ 2\end{array}, \begin{array}{c} 0\\ 1\\
		3\end{array}, \begin{array}{c}
		0\\ 2\\
		3\end{array}, \begin{array}{c}
		1\\
		2\\
		3\end{array}, \begin{array}{c}
		1\\
		2\\
		0\end{array}, \begin{array}{c}
		1\\
		3\\
		0\end{array}, \begin{array}{c}
		2\\
		3\\
		0\end{array}, \begin{array}{c} 2\\ 3\\ 1\end{array}, \begin{array}{c} 2\\ 0\\ 1\end{array}, \begin{array}{c} 3\\ 0\\ 1\end{array}, \begin{array}{c} 3\\ 0\\ 2\end{array}, \begin{array}{c} 3\\ 1\\ 2\end{array}, \begin{array}{c} 0\\ 1\\ 2\end{array},\dots$$

	\begin{lem}
		\label{Consistent}
		If a graph $G$ has a consistent $(p\!:\!q)$-colouring with clustering $c_1$, and 
		a graph $H$ has a $(q\!:\!r)$-colouring with clustering $c_2$, then
		$G\boxtimes H$ has a $(p\!:\!r)$-colouring with clustering $c_1c_2$. 
		%Moreover, if in addition the $(q\!:\!r)$-colouring of $H$ is
		%consistent, then the  $(p\!:\!r)$-colouring of $G\boxtimes H$ is also consistent.
	\end{lem}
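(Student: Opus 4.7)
The plan is to build the $(p\!:\!r)$-colouring $\gamma$ of $G\boxtimes H$ directly from the two given colourings by using $\beta(y)\subseteq[1,q]$ as a set of \emph{indices} into the ordered palette $\alpha_x^1,\dots,\alpha_x^q$ of each vertex $x$ of $G$. Explicitly, I set
\[
\gamma(x,y) := \{\,\alpha_x^i : i\in\beta(y)\,\}.
\]
Since $\alpha_x^1,\dots,\alpha_x^q$ are distinct and $|\beta(y)|=r$, each vertex receives exactly $r$ colours out of a total palette of size $p$, so $\gamma$ is a $(p\!:\!r)$-colouring.

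To bound clustering, fix a colour $c$ and consider a $\gamma$-monochromatic component $M$ of $G\boxtimes H$ using colour $c$. For each $(x,y)\in M$ there is a unique index $i(x,y)\in\beta(y)$ with $\alpha_x^{i(x,y)}=c$, uniqueness following from the fact that $\alpha_x^1,\dots,\alpha_x^q$ is an \emph{ordering} of $\alpha(x)$. The central observation is that $i(x,y)$ is constant on $M$. Indeed, for any edge $(x,y)(x',y')$ of $G\boxtimes H$ with both endpoints in $M$, either $x=x'$ (in which case $\alpha_x^{i(x,y)}=\alpha_{x'}^{i(x',y')}$ forces $i(x,y)=i(x',y')$ trivially), or $xx'\in E(G)$ (in which case consistency of $\alpha$ rules out $i(x,y)\ne i(x',y')$, since otherwise $\alpha_x^{i(x,y)}\ne\alpha_{x'}^{i(x',y')}$). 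Propagating along a path in $M$ gives a common index $i$.

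Let $\pi_G(M):=\{x : \exists y,\,(x,y)\in M\}$ and $\pi_H(M):=\{y : \exists x,\,(x,y)\in M\}$. Both projections are connected in their respective factors, because every edge of $G\boxtimes H$ projects to either an equality or an edge in each factor. Every $x\in\pi_G(M)$ satisfies $\alpha_x^i=c$, so $c\in\alpha(x)$; combined with connectedness, $\pi_G(M)$ lies in a single $\alpha$-monochromatic component of $G$ of colour $c$, hence $|\pi_G(M)|\le c_1$. Similarly, every $y\in\pi_H(M)$ satisfies $i\in\beta(y)$, so $\pi_H(M)$ lies in a single $\beta$-monochromatic component of $H$ of colour $i$, giving $|\pi_H(M)|\le c_2$. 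Since $M\subseteq\pi_G(M)\times\pi_H(M)$, we conclude $|M|\le c_1c_2$.

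The only non-routine step is showing that the index $i(x,y)$ is constant on $M$, and this is precisely what the consistency hypothesis was designed for; once that is in hand, the projection argument and the size bound follow immediately.
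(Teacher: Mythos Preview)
Your proof is correct and follows essentially the same approach as the paper: define $\gamma(x,y)=\{\alpha_x^i:i\in\beta(y)\}$, use consistency of $\alpha$ (together with distinctness of the $\alpha_x^j$) to show that the index $i$ is constant along a monochromatic component, and conclude that the component sits inside the product of an $\alpha$-monochromatic component of $G$ and a $\beta$-monochromatic component of $H$. Your presentation via the projections $\pi_G(M)$ and $\pi_H(M)$ is a minor stylistic variation on the paper's edge-by-edge argument that $A_x$ and $B_v$ are invariant, but the substance is identical.
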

	
	\begin{proof}
		Let $\alpha$ be a consistent $(p\!:\!q)$-colouring of $G$ with
		clustering $c_1$, that is each vertex $x\in V(G)$ has colours
		$\alpha^1_x,\ldots,\alpha^q_x$ such that $\alpha_x^{i} \neq
		\alpha_y^{j}$ for each edge $xy\in E(G)$ and for all distinct
		$i,j\in[1,q]$. Let $\beta$ be a
		$(q\!:\!r)$-colouring of $H$ with clustering $c_2$, with colours from
		$[q]$. Colour each vertex  $(x,v)$ of $G\boxtimes H$ by $\{\alpha_x^i:i\in \beta(v)\} \in \binom{[p]}{r}$. 
		
		Let $Z$ be a monochromatic component of $G\boxtimes H$ defined by colour $c\in[p]$. Let $(x,v)$ be any vertex in $Z$. So $c=\alpha_x^i$ for some $i\in \beta(v)$. Let $A_x$ be the $\alpha$-component of $G$ defined by $c$ and containing $x$. Let $B_v$ be the $\beta$-component of $H$ defined by $i$ and containing $v$. 
		
		Consider an edge $(x,v)(y,w)$ of $Z$. Thus $c=\alpha_y^j$ for some $j\in \beta(w)$. Since $A_x$ is an $\alpha$-component containing $x$ and ($xy\in E(G)$ or $x=y$), the vertex $y$ is also in $A_x$. 
		If $xy\in E(G)$, then $i=j$ since $\alpha$ is consistent. Otherwise, $x=y$ and $\alpha_x^i=c=\alpha_y^j =\alpha_x^j$, again implying $i=j$. In both cases $i=j \in \beta(w)$. Since $B_v$ is a $\beta$-component defined by $i$ and containing $v$ and ($vw\in E(G)$ or $v=w$), the vertex $w$ is also in $B_v$. 
		
		For every edge $(x,v)(y,w)$ of $Z$, we have shown that $y\in V(A_x)$ and $w\in V(B_v)$. Thus $A_y=A_x$ and $B_w=B_v$. Since $Z$ is connected, for all vertices $(x,v)$ and $(y,w)$ of $Z$, we have $A_x=A_y$ and $B_v=B_w$. Hence $Z\subseteq A_x\boxtimes B_v$ for any $(x,v)\in V(Z)$. 
		
		Since $A_x$ is $\alpha$-monochromatic, $|A_x|\leq c_1$. 
		Since $B_v$ is $\beta$-monochromatic, $|B_v|\leq c_2$. As $Z\subseteq A_x\boxtimes B_v$, $|Z| \leq |A_x \boxtimes B_v| \leq c_1c_2$. 
		Hence, our colouring of $G\boxtimes H$ has clustering $c_1c_2$.
	\end{proof}
	
	Every proper colouring is consistent, so \cref{Consistent} implies:
	
	\begin{cor}
		\label{ConsistentClustered}
		If a graph $G$ has a proper $(p\!:\!q)$-colouring, and a graph $H$ has a $(q\!:\!r)$-colouring with clustering $c$, then  $G\boxtimes H$ has a $(p\!:\!r)$-colouring with clustering $c$.
	\end{cor}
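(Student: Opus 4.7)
The plan is to derive this as an immediate consequence of \cref{Consistent} by observing that a proper $(p\!:\!q)$-colouring is automatically consistent and, as a colouring, has clustering $1$.

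First I would verify the consistency claim. Given a proper $(p\!:\!q)$-colouring $\alpha$ of $G$, fix an arbitrary ordering $\alpha_x^1,\dots,\alpha_x^q$ of $\alpha(x)$ for each vertex $x$. Since $\alpha$ is proper, for every edge $xy\in E(G)$ we have $\alpha(x)\cap\alpha(y)=\emptyset$, and hence $\alpha_x^i\neq\alpha_y^j$ holds for \textbf{all} pairs $i,j\in[1,q]$, not merely for distinct ones. This is the consistency condition, so $\alpha$ qualifies as a consistent $(p\!:\!q)$-colouring.

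Next I would observe that a proper colouring trivially has clustering $1$: each monochromatic subgraph is an independent set, so every monochromatic component is a single vertex. Thus the hypothesis of \cref{Consistent} is satisfied with $c_1=1$ and $c_2=c$.

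Applying \cref{Consistent} with these parameters yields a $(p\!:\!r)$-colouring of $G\boxtimes H$ with clustering $c_1c_2 = 1\cdot c = c$, which is the desired conclusion. There is no substantive obstacle here: the whole point of the corollary is to record the useful special case of \cref{Consistent} where $G$ is properly (rather than merely consistently) coloured, and the proof reduces to these two one-line observations.
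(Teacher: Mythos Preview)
Your proposal is correct and matches the paper's approach exactly: the paper simply states ``Every proper colouring is consistent, so \cref{Consistent} implies'' the corollary, which is precisely your argument (applying \cref{Consistent} with $c_1=1$).
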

	
	\begin{cor}
		\label{ConsistentProperProper}
		If a graph $G$ has a proper $(p\!:\!q)$-colouring and a graph $H$ has a proper $(q\!:\!r)$-colouring, then $G\boxtimes H$ has a proper $(p\!:\!r)$-colouring.
	\end{cor}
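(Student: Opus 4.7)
The plan is to derive this directly from \cref{ConsistentClustered}, which requires only a proper $(p\!:\!q)$-colouring of $G$ together with a $(q\!:\!r)$-colouring of $H$ with bounded clustering, and concludes that $G\boxtimes H$ has a $(p\!:\!r)$-colouring with matching clustering.

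First I would observe that a proper colouring is exactly a colouring in which every monochromatic component is a single vertex; equivalently, it has clustering $1$. Thus the hypothesis that $H$ has a proper $(q\!:\!r)$-colouring is a special case of the hypothesis of \cref{ConsistentClustered} with $c=1$.

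Next I would apply \cref{ConsistentClustered} to the given proper $(p\!:\!q)$-colouring of $G$ and to the proper $(q\!:\!r)$-colouring of $H$ viewed as a $(q\!:\!r)$-colouring with clustering $1$. The conclusion yields a $(p\!:\!r)$-colouring of $G\boxtimes H$ with clustering $1\cdot 1=1$, which is precisely a proper $(p\!:\!r)$-colouring. This completes the proof; there is no real obstacle, since the work has already been done in \cref{Consistent}. The only subtlety worth noting is that \cref{ConsistentClustered} itself relied on the fact (used in the proof of \cref{Consistent}) that every proper colouring is consistent, so invoking it here is legitimate even when both inputs are proper.
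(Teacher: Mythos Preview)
Your argument is correct and matches the paper's intended derivation: the paper presents \cref{ConsistentProperProper} immediately after \cref{ConsistentClustered} as a further consequence of \cref{Consistent}, with no separate proof, and your observation that ``proper'' is the same as ``clustering~$1$'' is exactly the missing line. One tiny remark: \cref{ConsistentClustered} outputs clustering~$c$ rather than $c_1c_2$, so the ``$1\cdot 1$'' is superfluous---but of course harmless.
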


\cref{ClusteredProductColouring} states that a $(p_1\!:\!q_1)$-colouring
of a graph $G$ (with bounded clustering) can be combined with a $(p_2\!:\!q_2)$-colouring
of a graph $H$ (with bounded clustering) to produce a $(p_1p_2\!:\!q_1q_2)$-colouring
of $G\boxtimes H$ (with bounded clustering). A natural
question is whether this fractional colouring can be simplified; that
is, is there a $(p_3\! :\!q_3)$-colouring
of $G\boxtimes H$ with
$\tfrac{p_3}{q_3}\le\tfrac{p_1p_2}{q_1q_2}$ and $p_3<p_1p_2$? 
There is no hope to obtain such a simplification in general, since if $G$ and $H$ are complete graphs and $q_1=q_2=1$, then $G\boxtimes H$ is a complete graph on $p_1p_2$ vertices. However,
\cref{ConsistentClustered} shows that when the fractional colouring of $G$ is proper
and $q_1=p_2$, the resulting fractional colouring of $G\boxtimes H$ can
be simplified significantly.

We now show another way to simplify the $(p_1p_2\!:\!q_1q_2)$-colouring
of the graph  $G\boxtimes H$, by allowing a
small loss on the fraction $\tfrac{p_1p_2}{q_1q_2}$. Below we  only
consider the case $p_1=p_2$ and $q_1=q_2$ for simplicity, but the
technique can be extended to the more general case. We use the Chernoff bound: For any $0 \le t \le nx$, the probability that the binomial distribution
$\mathrm{Bin}(n,x) $ with parameters $n$ and $x$ differs from its
expectation $nx$ by at least $t$ satisfies
  \[\Pr(|\mathrm{Bin}(n,x)-nx| > t) < 2\exp(-t^2/(3nx)).\]

\begin{lem}
	Assume that $G$ has a $(p\!:\!q)$-colouring (with bounded clustering) and $H$ has
	a $(p\!:\!q)$-colouring (with bounded clustering). Then for any real
	number $0<x\le  1$,
	$G\boxtimes H$ has a $\Big(xp^2+O(p\sqrt{x}):\big(xq^2-O(q^{3/2}\sqrt{x\log p})\big)\Big)$-colouring (with bounded clustering).
\end{lem}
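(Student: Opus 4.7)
The plan is to start by applying \cref{ClusteredProductColouring} to combine the two given colourings into a $(p^2\!:\!q^2)$-colouring $\phi$ of $G\boxtimes H$ with some bounded clustering $c$. Concretely, if $\alpha,\beta$ are the given colourings of $G$ and $H$, then $\phi(v,w)=\alpha(v)\times\beta(w)\subseteq[p]\times[p]$. The goal is then to randomly sparsify this colouring: select a random subset $S\subseteq[p]\times[p]$ by including each colour independently with probability $x$, and keep, at each vertex, the intersection $\phi(v,w)\cap S$ (trimmed down to a fixed size $r$). Since the new colour set at each vertex is contained in the old one, monochromatic components of the new colouring are subgraphs of monochromatic components of $\phi$, so the clustering remains at most $c$.

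The technical heart of the proof is showing that with positive probability $|S|\le(1+o(1))xp^2$ and simultaneously $|\phi(v,w)\cap S|\ge r:=xq^2-O(q^{3/2}\sqrt{x\log p})$ for every vertex of $G\boxtimes H$. The first bound follows from a single application of the Chernoff inequality to $|S|\sim\mathrm{Bin}(p^2,x)$. The second is where the main obstacle lies: a naive union bound over all vertices of $G\boxtimes H$ is hopeless because $|V(G\boxtimes H)|$ is not controlled by $p$ and $q$. The key observation that rescues the argument is that the \emph{distinct} colour sets appearing as $\phi(v,w)$ have the product form $A\times B$ with $A,B\in\binom{[p]}{q}$, so there are at most $\binom{p}{q}^2\le p^{2q}$ of them. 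For any fixed product set $T=A\times B$, the variable $|T\cap S|$ is $\mathrm{Bin}(q^2,x)$, and the Chernoff bound stated in the paper gives
\[
\Pr\bigl(|T\cap S|<xq^2-t\bigr)<2\exp\!\bigl(-t^2/(3xq^2)\bigr).
\]
Choosing $t=\Theta(q^{3/2}\sqrt{x\log p})$ makes this probability at most $p^{-3q}$, say, so a union bound over the $\le p^{2q}$ product sets leaves room to also absorb the event on $|S|$.

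With such an $S$ fixed, we produce the final colouring by selecting at each vertex any $r$-subset of $\phi(v,w)\cap S$. This yields a $\bigl((1+o(1))xp^2\!:\!r\bigr)$-colouring of $G\boxtimes H$ with clustering at most $c$, which is exactly the statement. I expect the only delicate step to be verifying that the Chernoff parameters line up so that both the bound on $|S|$ and the union bound over the at most $\binom{p}{q}^2\le p^{2q}$ distinct product colour sets hold simultaneously with positive probability; once the count $\binom{p}{q}^2$ is in hand, the rest is a routine calculation.
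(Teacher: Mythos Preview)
Your proposal is correct and follows essentially the same route as the paper: pick a random subset of $[p]^2$ with inclusion probability $x$, use Chernoff to control its size, and use a union bound over the at most $\binom{p}{q}^2\le p^{2q}$ product sets $A\times B$ together with Chernoff (with deviation $t=\Theta(q^{3/2}\sqrt{x\log p})$) to guarantee every vertex keeps at least $q'$ colours. The only cosmetic difference is that you first build the full $(p^2\!:\!q^2)$-product colouring via \cref{ClusteredProductColouring} and then sparsify, whereas the paper fixes the random subset first and then defines the colour classes; you are also slightly more explicit about the final trimming to exactly $r$ colours per vertex.
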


\begin{proof}
	Let $X$ be a random subset of $[p]^2$ obtained by including each
	element of $[p]^2$ independently with probability $x$. By the
	Chernoff bound, $p':=|X|\le xp^2+O(p\sqrt{x})$ with high probability
	(i.e., with probability tending to $1
	$ as $p\to \infty$).
	
	Consider two $q$-element subsets  $S,T\subseteq [p]$. Then it
	follows from the Chernoff bound that for any $0\le t \le xq^2$, the probability that $S\times
	T$ contains less than $xq^2-t$ elements of $X$ is at most
	$2\exp(-t^2/(3q^2x))$. By the union bound, the probability that
	there exist two $q$-element subsets  $S,T\subseteq [p]$ with
	$|(S\times T)\cap X|<xq^2-t$ is at most $p^q\cdot p^q\cdot
	2\exp(-t^2/(3q^2x))<2\exp(2q\log p-t^2/(3q^2x))$. By taking
	$t=\Theta(q^{3/2}\sqrt{x\log p})$, this quantity is less than
	$\tfrac12$. 
	
	It follows that there exists a subset $X\subseteq [p]^2$ of at most
	$p'=xp^2+O(p\sqrt{x})$ elements, such that for all $q$-elements subsets
	$S,T\subseteq [p]$, 
	$|(S\times T)\cap X|\ge q':=xq^2-O(q^{3/2}\sqrt{x\log p})$.
	
	\smallskip
	
	Let $c_G$ be a $(p\!:\!q)$-colouring of $G$ with colours $[p]$ and
	let $c_H$ be a $(p\!:\!q)$-colouring of $H$ with colours $[p]$.
	For any pair $(i,j)\in X$ define the colour class 
	$C_{ij}=\{u\in V(G)\,:\, i \in c_G(u)\}\times
	\{v\in V(H)\,:\, j\in c_H(v)\} $ in $G\boxtimes H$.

	Let $c$ denote the resulting colouring of
	$G\boxtimes H$, and observe that if $c_G$ and $c_H$ are proper, so
	is $c$, and if  $c_G$ and $c_H$ have bounded clustering, so does
	$c$, since each colour class in $c$ is the cartesian product of a
	colour class of $G$ and a colour class of $H$. Moreover $c$ uses at
	most $p'$ colours, and each vertex of $G\boxtimes H$ receives at least $q'$
	colours. It follows that $c$ is a $(p'\!:\!q')$-colouring of
	$G\boxtimes H$ (with bounded clustering), as desired.
\end{proof}

%%%%%%%%%%%%%%%%%%%%%%%%%%%%%%%%%%%%%%%%%%%%%%%%%%%%%
\subsection{Paths and Cycles}\label{sec:pathscycles}

The next lemma shows how to obtain a consistent colouring of a tree with small clustering. 

\begin{lem}
	\label{EdgePartitionConsistentColouring}
	If a tree $T$ has an edge-partition $E_1,\dots,E_k$ such that for each $i\in[1,k]$  
	each component of $T-E_i$ has at most $c$ vertices, then $T$ has a consistent $(k+1\!:\!k)$-colouring with clustering $c$. 
\end{lem}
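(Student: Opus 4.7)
The plan is to root the tree and construct the colouring inductively, choosing colours one vertex at a time so that each function $f_i(v) := \alpha_v^i$ is constant on the components of $T-E_i$.

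Concretely, root $T$ at an arbitrary vertex $r$. For each non-root vertex $v$, let $\pi(v)$ denote its parent and $i(v)\in[1,k]$ be the unique index with $v\pi(v) \in E_{i(v)}$. I initialise $f_1(r),\dots,f_k(r)$ to be any $k$ distinct elements of $[k+1]$ and process the remaining vertices in BFS order. At a vertex $v \neq r$, for every $i \neq i(v)$ the vertices $v$ and $\pi(v)$ lie in the same component of $T - E_i$, so I must set $f_i(v) := f_i(\pi(v))$. For $i = i(v)$, I set $f_{i(v)}(v)$ to be the unique colour in $[k+1] \setminus \{f_1(\pi(v)),\dots,f_k(\pi(v))\}$---the ``missing colour'' at $\pi(v)$. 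Since the missing colour is, by definition, distinct from each $f_i(\pi(v))$, the $k$ values $f_1(v),\dots,f_k(v)$ remain pairwise distinct, yielding a $(k+1\!:\!k)$-colouring.

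Consistency is then a short case analysis. Fix an edge $xy \in E_{i_0}$ and distinct indices $i,j \in [1,k]$. Since $i \neq j$, at most one of them equals $i_0$, so at least one of $f_i,f_j$ is constant across $xy$; the condition $f_i(x) \neq f_j(y)$ therefore reduces to distinctness of the $k$ values at $x$ or at $y$, which was just established.

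The main obstacle, and the real content of the argument, is bounding the clustering. Let $Z$ be a monochromatic component for some colour $\alpha^\star$. At each $v \in Z$ there is a \emph{unique} index $i_v \in [1,k]$ with $f_{i_v}(v) = \alpha^\star$ (by distinctness at $v$), and consistency forces $i_x = i_y$ for every edge $xy$ of $Z$; since $Z$ is connected this means there is a single $i^\star$ with $f_{i^\star}(v) = \alpha^\star$ for all $v \in Z$. I claim $Z$ uses no edge of $E_{i^\star}$: if such an edge $xy$ with $\pi(y)=x$ existed then $i(y) = i^\star$, so by construction $f_{i^\star}(y)$ equals the missing colour at $x$, which differs from $f_{i^\star}(x) = \alpha^\star$, contradicting $y \in Z$. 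Hence $Z$ is contained in a single component of $T - E_{i^\star}$, which has at most $c$ vertices. The clever ingredient is the choice of the missing colour for $f_{i(v)}(v)$: it simultaneously guarantees distinctness at $v$ and ensures that each $f_i$ changes value across every $E_i$-edge, which is exactly what caps the cluster size at $c$.
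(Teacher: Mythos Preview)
Your proof is correct and follows essentially the same approach as the paper: both root the tree, process vertices top-down, and at each edge in $E_i$ change only the $i$-th coordinate to the parent's missing colour, then use consistency to pin down a single index $i^\star$ on each monochromatic component and conclude it avoids $E_{i^\star}$. Your clustering argument (showing no $E_{i^\star}$-edge lies \emph{inside} $Z$) is the contrapositive of the paper's (showing every boundary edge of $X$ lies in $E_j$), and your consistency verification is spelled out in slightly more detail, but the constructions and ideas are identical.
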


\begin{proof} 
	Root $T$ at some leaf vertex $r$ and orient $T$ away from
	$r$. We now label each vertex $v$ of $T$ by a sequence
	$(\ell^1_v,\dots,\ell^k_v)$ of distinct elements of
	$\{1,\dots,k+1\}$. First label $r$ by $(1,\dots,k)$. Now label
	vertices in order of non-decreasing distance from
	$r$. Consider an arc $vw$ with $v$ labelled
	$(\ell^1_v,\dots,\ell^k_v)$ and $w$ unlabelled. Say $vw\in
	E_i$. Let $z$ be the element of
	$\{1,\dots,k+1\}\setminus\{\ell^1_v,\dots,\ell^k_v\}$. Then
	label $w$ by
	$(\ell^1_v,\dots,\ell^{i-1}_v,z,\ell^{i+1}_v,\dots,\ell^k_v)$. Label
	every vertex in $T$ by repeated application of this rule. It
	is immediate that this labelling determines a consistent
	$(k+1\!:\!k)$-colouring of $T$. 	
	
	Consider a monochromatic component $X$ of $T$ determined by colour $i$. If $vw$ is an edge of $T$ with $v\in V(X)$ and $w\not\in V(X)$ and $vw\in E_j$, then $\ell_j(v)=i$ and $\ell_j(w)\neq i$ and the only colour not assigned to $w$ is $\ell_j(v)=i$. By consistency, $\ell_j(x)=i$ for every $x \in V(X)$, and for every edge $xy\in E(T)$ with $x\in V(X)$ and $y\not\in V(X)$ we have $xy\in E_j$. Thus $X$ is contained in $T-E_j$ and has at most $c$ vertices. 
\end{proof}

\begin{lem}
	\label{ConsistentPath}
	For every $k\in\NN$, every path has a consistent $(k+1\!:\!k)$-colouring with clustering $k$. 
\end{lem}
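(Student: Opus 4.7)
The plan is to apply \cref{EdgePartitionConsistentColouring} directly with $c = k$. To do so, I need to exhibit an edge-partition $E_1,\dots,E_k$ of the path such that each $P - E_i$ has components of size at most $k$.

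Enumerate the edges of the path as $e_1, e_2, e_3, \dots$ in order along the path, and define $E_i := \{ e_j : j \equiv i \pmod{k} \}$ for each $i \in [1,k]$. Clearly $E_1,\dots,E_k$ partition $E(P)$. Within any single class $E_i$, two consecutive edges are separated by exactly $k-1$ edges of $P$. Thus removing $E_i$ decomposes $P$ into subpaths each containing at most $k$ consecutive vertices (with possibly shorter pieces at the two ends of $P$). So each component of $P - E_i$ has at most $k$ vertices.

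Applying \cref{EdgePartitionConsistentColouring} with this partition yields a consistent $(k+1 \!:\! k)$-colouring of $P$ with clustering $k$, as required. There is no real obstacle: the edge-partition is explicit, the bound $c = k$ is immediate from the spacing, and the consistency and clustering claims are handed to us by the preceding lemma.
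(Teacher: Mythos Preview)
Your proof is correct and essentially identical to the paper's own argument: both enumerate the edges along the path, partition them into residue classes modulo $k$, observe that deleting any one class leaves subpaths on at most $k$ vertices, and then invoke \cref{EdgePartitionConsistentColouring}. The only cosmetic difference is that the paper indexes the classes $E_0,\dots,E_{k-1}$ rather than $E_1,\dots,E_k$.
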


\begin{proof}
	Let $e_1,\dots,e_n$ be the sequence of edges in a path $P$. For $i\in[0,k-1]$ let $E_i:=\{e_j: j \equiv i \pmod{k}\}$. So $E_0,\dots,E_{k-1}$ is an edge-partition of $P$ such that for each $i\in[0,k-1]$  each component of $T-E_i$ has at most $k$ vertices. By \cref{EdgePartitionConsistentColouring}, $P$ has a consistent $(k+1\!:\!k)$ colouring with clustering $k$. 
\end{proof}

\cref{ConsistentPath,Consistent} imply the following result. Products with paths are of particular interest because of the results in \cref{SubgraphsStrongProducts}.

\begin{thm}
\label{MultiplyPath}
If a graph $G$ is $k$-colourable with clustering $c$ and $P$ is a path, then $G\boxtimes P$ is $(k+1)$-colourable with clustering $ck$.
\end{thm}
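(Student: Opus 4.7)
The plan is to apply \cref{Consistent} with the path $P$ playing the role of the first graph and $G$ playing the role of the second graph. The key observation is that a proper $k$-colouring of $G$ is exactly a $(k\!:\!1)$-colouring, so we can feed it directly into \cref{Consistent} if we can supply a consistent $(k+1\!:\!k)$-colouring of $P$ with bounded clustering.

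First, I invoke \cref{ConsistentPath} to obtain a consistent $(k+1\!:\!k)$-colouring of $P$ with clustering $k$. Second, I view the given $k$-colouring of $G$ with clustering $c$ as a $(k\!:\!1)$-colouring using the same colour set $[k]$ (so that the matching condition $q=k$ required by \cref{Consistent} holds). Now the two fractional colourings are compatible in the sense needed, and applying \cref{Consistent} with parameters $p=k+1$, $q=k$, $r=1$, $c_1 = k$, $c_2 = c$ yields a $(k+1\!:\!1)$-colouring of $P \boxtimes G$ with clustering $c_1 c_2 = ck$. Since a $(k+1\!:\!1)$-colouring is just an ordinary $(k+1)$-colouring, and since the strong product is commutative (so $G \boxtimes P \cong P \boxtimes G$), this gives the desired conclusion.

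There is essentially no obstacle here beyond checking that the parameters line up correctly: the crucial alignment is that the denominator $q=k$ of the fractional colouring of $P$ matches the numerator $q=k$ of the trivial $(k\!:\!1)$-colouring of $G$. This is exactly the scenario \cref{Consistent} was built to handle, and \cref{ConsistentPath} was proved for exactly this purpose, so the theorem follows in one or two lines.
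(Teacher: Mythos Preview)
Your proposal is correct and follows exactly the paper's approach: the paper's proof is literally the one-line statement that \cref{ConsistentPath} and \cref{Consistent} together imply the result, which is precisely what you carry out. One minor wording slip: in your opening plan you say ``a proper $k$-colouring of $G$'', but the hypothesis only gives a $k$-colouring with clustering $c$; you correctly handle this in the body of the argument, so the proof is fine.
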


Recall that $\boxtimes_d P_{n}$ denotes the $d$-dimensional grid $P_n\boxtimes \cdots \boxtimes P_n$. \cref{MultiplyPath} implies the upper bound in the following result. As discussed in \cref{sec:hex}, the lower bound comes from the $d$-dimensional Hex Lemma~\citep{Gale79,LMST08,MP08,BDN17,MP08,Matdinov13,Karasev13}. 

\begin{cor}
	\label{HexGrid}
	$\boxtimes_d P_{n}$ is $(d+1)$-colourable with clustering $d!$. Conversely, every $d$-colouring of $\boxtimes_d P_{n}$ has a monochromatic component of size at least $n$. Hence $$\cchi(\{ \boxtimes_d P_{n}  : n \in\NN\})=d+1.$$
\end{cor}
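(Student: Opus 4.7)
The plan is to prove the two bounds separately. For the upper bound, I proceed by induction on $d$, repeatedly applying \cref{MultiplyPath}. The base case $d=1$ is trivial: $P_n$ admits a proper $2$-colouring (alternating colours), which has clustering $1 = 1!$. For the inductive step, assume $\boxtimes_{d-1} P_n$ is $d$-colourable with clustering $(d-1)!$. Writing $\boxtimes_d P_n = (\boxtimes_{d-1} P_n) \boxtimes P_n$, \cref{MultiplyPath} with $k=d$ and $c=(d-1)!$ immediately gives a $(d+1)$-colouring of $\boxtimes_d P_n$ with clustering $(d-1)! \cdot d = d!$, as required.

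For the lower bound, I would invoke the $d$-dimensional Hex Lemma as stated in \cref{sec:hex}: in every $d$-colouring of $G_n^d$ there is a monochromatic path joining two opposite sides of the grid, and such a path has at least $n$ vertices. Now $G_n^d$ and $\boxtimes_d P_n$ have the same vertex set $\{1,\dots,n\}^d$, and by construction every edge of $G_n^d$ is also an edge of $\boxtimes_d P_n$ (both adjacency conditions in $G_n^d$ are special cases of coordinates differing by at most $1$). Hence $G_n^d \subseteq \boxtimes_d P_n$. Given any $d$-colouring $f$ of $\boxtimes_d P_n$, restrict $f$ to obtain a $d$-colouring of $G_n^d$; the Hex Lemma produces a monochromatic path of length $\geq n$ in $G_n^d$, and this path remains monochromatic and connected in $\boxtimes_d P_n$. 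Therefore some $f$-monochromatic component of $\boxtimes_d P_n$ has at least $n$ vertices.

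Combining the two bounds, no $d$-colouring has bounded clustering (as $n \to \infty$), while a $(d+1)$-colouring with clustering $d!$ exists for every $n$, giving $\cchi(\{\boxtimes_d P_n : n \in \NN\}) = d+1$.

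The main obstacle is really only bookkeeping: the upper bound is a clean induction, and the lower bound relies entirely on the $d$-dimensional Hex Lemma, which is cited as known. The only subtle point worth checking carefully is the containment $G_n^d \subseteq \boxtimes_d P_n$, which is routine from the definitions.
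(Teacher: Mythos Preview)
Your proof is correct and follows exactly the approach the paper takes: the upper bound by induction on $d$ via \cref{MultiplyPath}, and the lower bound via the $d$-dimensional Hex Lemma together with the containment $G_n^d \subseteq \boxtimes_d P_n$ already noted in \cref{sec:hex}. The paper merely sketches this, so your write-up fills in the details faithfully.
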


Note that the corollary above can also be deduced from the
following simple lemma, which does not use consistent
colourings (however we need this notion to prove the stronger
\cref{MultiplyPath} above, and its generalisation \cref{MultiplyTree} in \cref{sec:treestreewidth}). 

\begin{lem}\label{lem:comment}
	If $G$ is $(p\!:\!q)$-colourable with clustering $c_1$ and $H$ is
	$(p\!:\!r)$-colourable with clustering $c_2$, and $q+r>p$, then $G\boxtimes H$ is
	$(p\!:\!(q+r-p))$-colourable with clustering $c_1c_2$.
\end{lem}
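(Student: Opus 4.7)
The plan is to build the colouring of $G\boxtimes H$ directly from the given colourings by intersecting colour sets, and then argue about clustering via a projection argument similar to the one used in the proof of \cref{Consistent}.

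More precisely, assume both colourings use the same palette $[p]$ (any $(p\!:\!q)$- and $(p\!:\!r)$-colouring can be so normalized). Let $\alpha : V(G) \to \binom{[p]}{q}$ be the colouring of $G$ with clustering $c_1$, and $\beta : V(H) \to \binom{[p]}{r}$ the colouring of $H$ with clustering $c_2$. For each vertex $(x,v) \in V(G\boxtimes H)$, define $\gamma(x,v) := \alpha(x) \cap \beta(v)$. Since $\alpha(x), \beta(v) \subseteq [p]$ with $|\alpha(x)|=q$ and $|\beta(v)|=r$, inclusion–exclusion gives
\[
|\gamma(x,v)| \;=\; |\alpha(x)| + |\beta(v)| - |\alpha(x)\cup\beta(v)| \;\geq\; q+r-p.
\]
By arbitrarily extending each $\gamma(x,v)$ to a set of exactly $q+r-p$ colours (if necessary), we obtain a $(p\!:\!(q+r-p))$-colouring of $G\boxtimes H$, provided $q+r-p \geq 1$ (otherwise the statement is vacuous).

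It remains to bound the clustering. Fix a colour $\sigma \in [p]$, and let $Z$ be a connected monochromatic component of $G\boxtimes H$ in colour $\sigma$, so every $(x,v) \in V(Z)$ satisfies $\sigma \in \alpha(x) \cap \beta(v)$. Pick a base vertex $(x_0,v_0)\in V(Z)$, let $A$ be the $\alpha$-monochromatic component of $G$ in colour $\sigma$ containing $x_0$, and let $B$ be the $\beta$-monochromatic component of $H$ in colour $\sigma$ containing $v_0$. The key claim is that $V(Z)\subseteq V(A)\times V(B)$. This follows by induction along any path in $Z$ from $(x_0,v_0)$: if $(x,v)(y,w)$ is an edge of $Z$, then either $x=y$ or $xy\in E(G)$, and in both cases $y$ lies in the same $\alpha$-monochromatic subgraph of $G$ in colour $\sigma$ as $x$, hence in the same component; similarly $w$ lies in the same $\beta$-component as $v$. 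Consequently $|V(Z)| \leq |V(A)|\cdot|V(B)| \leq c_1 c_2$.

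The only mild subtlety is the step of ensuring that $\gamma(x,v)$ has \emph{exactly} $q+r-p$ colours: extending the intersection by arbitrary colours could, a priori, damage the clustering argument, since a vertex would then carry colours not forced by $\alpha$ and $\beta$. I would address this by observing that the argument above bounds only the components in colours forced by the intersection; for the padding colours, one can extend each vertex's colour set independently and arbitrarily, which at worst creates additional monochromatic components but does not enlarge the intersection-based ones — and a small bookkeeping check (or a direct proof that $|\alpha(x)\cap \beta(v)|$ may without loss of generality be taken to equal $q+r-p$ at every vertex, by renaming colours) finishes the proof.
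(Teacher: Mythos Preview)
Your approach is essentially identical to the paper's: define the colour class of each $i\in[p]$ in $G\boxtimes H$ to be the product of the colour-$i$ classes in $G$ and in $H$, which amounts exactly to your assignment $\gamma(x,v)=\alpha(x)\cap\beta(v)$; the inclusion--exclusion (pigeonhole) bound gives $|\gamma(x,v)|\geq q+r-p$, and the clustering bound follows since the monochromatic subgraph in colour $i$ is contained in the strong product of the colour-$i$ subgraphs of $G$ and $H$.

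There is, however, a small but genuine slip in your last two paragraphs: you have the direction of the adjustment backwards. Since $|\gamma(x,v)|\geq q+r-p$, to obtain an honest $(p\!:\!(q+r-p))$-colouring you must \emph{delete} excess colours from each vertex, not \emph{extend} the set. Once the direction is corrected, the concern in your final paragraph evaporates: removing colours from vertices can only shrink monochromatic subgraphs, and hence can only shrink monochromatic components, so the bound $c_1c_2$ is preserved automatically. No bookkeeping or renaming argument is needed.
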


\begin{proof}
	Consider a $(p\!:\!q)$-colouring of $G$ and a
	$(p\!:\!r)$-colouring of $H$ and for each $i\in [p]$, let the colour class
	of colour $i$ in $G\boxtimes H$ be the product of the colour class of
	colour $i$ in $G$ and the colour class of
	colour $i$ in $H$. Clearly, monochromatic components in $G\boxtimes H$
	have size at most $c_1c_2$. Moreover, a pigeonhole argument tells us
	that each vertex of $G\boxtimes H$ is covered by at least  $q+r-p$
	colours in $G\boxtimes H$, as desired.
\end{proof}

In particular \cref{lem:comment}  shows that if $G$ is $(p\!:\!q)$-colourable with
clustering $c$, and $P$ is a path,  then $G\boxtimes P$ is $(p\!:\!q-1)$-colourable
with clustering $(p-1)c$. Here we have used the statement of
\cref{ConsistentPath}, that every path has a $(p\!:\!p-1)$-colouring
with clustering $p-1$ (but we did not use the additional property that such a
colouring could be taken to be consistent). By induction this easily implies \cref{HexGrid}.

\medskip

It is an interesting open problem to determine the minimum clustering
function in a $(d+1)$-colouring of $\boxtimes_d P_{n}$. Since $\boxtimes_d P_{n}$ contains a $2^d$-clique, every $(d+1)$-colouring has a monochromatic component with at least $2^d/(d+1)$ vertices. 

The fractional clustered chromatic number of Hex grid graphs is very different from the clustered chromatic number. 

\begin{prop}
	\label{cfchi-hexgrid}
	For fixed $d\in\NN$, 
	$$\cfchi(\{ \boxtimes_d P_{n}  : n \in\NN\})=1.$$
\end{prop}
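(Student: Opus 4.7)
The plan is to show $\cfchi \le 1+\epsilon$ for every $\epsilon > 0$ by combining consistent (or simply fractional) colourings of each path factor via the product lemma, while relying on the fractional freedom to make the ratio of colours tend to $1$.

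First I would invoke \cref{ConsistentPath} (or, strictly speaking, only the $(p\!:\!q)$-colouring part of it, since consistency is not needed here): for every $k\in\NN$, the path $P_n$ admits a $(k+1\!:\!k)$-colouring with clustering $k$. Note the ratio $(k+1)/k = 1 + 1/k$ can be made arbitrarily close to $1$. Then I would apply \cref{ClusteredProductColouring} to the $d$ factors $G_1 = \cdots = G_d = P_n$, each coloured in this way with $p_i = k+1$, $q_i = k$, $c_i = k$. This directly yields a $((k+1)^d : k^d)$-colouring of $\boxtimes_d P_n$ with clustering at most $k^d$.

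The resulting fraction is $\bigl(\tfrac{k+1}{k}\bigr)^d = (1+1/k)^d$, which, since $d$ is fixed, tends to $1$ as $k\to\infty$. Given any $\epsilon>0$, I would choose $k = k(d,\epsilon)$ large enough so that $(1+1/k)^d \le 1+\epsilon$; this gives, for every $n$, a fractional $(1+\epsilon)$-colouring of $\boxtimes_d P_n$ with clustering $k^d$, a bound depending only on $d$ and $\epsilon$. Hence $\cfchi(\{\boxtimes_d P_n : n\in\NN\}) \le 1 + \epsilon$, and since $\epsilon$ was arbitrary, $\cfchi \le 1$. The reverse inequality $\cfchi \ge 1$ is immediate whenever the class contains a graph with at least one vertex.

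There is really no main obstacle here: the argument is essentially a routine product construction combined with the observation that the fractional setting permits slightly less-than-perfect coverage, turning the multiplicative gap $(1+1/k)^d$ into a harmless factor for fixed $d$. The only minor point to be careful with is that the clustering bound $k^d$ depends on $\epsilon$ (through $k$) but not on $n$, as required by the definition of $\cfchi$.
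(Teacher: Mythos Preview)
Your proposal is correct and essentially identical to the paper's proof: both invoke \cref{ConsistentPath} to get a $(k+1\!:\!k)$-colouring of each path with clustering $k$, apply \cref{ClusteredProductColouring} to obtain a $((k+1)^d\!:\!k^d)$-colouring of $\boxtimes_d P_n$ with clustering $k^d$, and then observe that $(1+1/k)^d\to 1$ as $k\to\infty$. The only cosmetic difference is that the paper makes the choice of $k$ explicit (taking $k=\lceil 2d/\epsilon\rceil$ and using the elementary bound $(1+1/k)^d\le 1+2d/k$ for $k\ge 2d$), whereas you appeal directly to the limit.
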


\begin{proof}
	Fix $\epsilon\in(0,1)$ and let $k:= \ceil{2d/\epsilon}$. By \cref{ConsistentPath}, every path has a  $(k+1\!:\!k)$-colouring with clustering $k$. By \cref{ClusteredProductColouring}, for every $n\in\NN$, the graph $\boxtimes_d P_n$ is $((k+1)^d\!:\!k^d)$-colourable with clustering $k^d$. For $k\geq 2d$, it is easily proved by induction on $d$ that $(k+1)^d/k^d \leq 1 + 2d/k$. Thus 
	$(k+1)^d/k^d \leq 1+\epsilon$. This says that for any $\epsilon>0$ there exists $c$ (namely, $\ceil{2d/\epsilon}^d$) such that for every $n\in\NN$, the graph $\boxtimes_d P_n$ is fractionally $(1+\epsilon)$-colourable with clustering $c$. The result follows. 
	% 
	%%
	%IH $\left(\frac{k+1}{k}\right)^{d} \leq 1 + \frac{2d}{k}$
	%
	%$d=1$: $\frac{k+1}{k} \leq 1 + \frac{2}{k}$ TRUE
	%
	%$$\left(\frac{k+1}{k}\right)^{d} 
	%= \left(\frac{k+1}{k}\right)^{d-1} 
	%\left(\frac{k+1}{k}\right)^{d} 
	%\leq
	%\left(1 + \frac{2(d-1)}{k}\right)
	%\left(\frac{k+1}{k}\right)
	%\leq 1 + \frac{2d}{k}$$
	%
	%$$\left(1 + \frac{2(d-1)}{k}\right)
	%\left(\frac{k+1}{k}\right)
	%\leq 1 + \frac{2d}{k}$$
	%
	%$$\left(k + 2(d-1)\right)
	%\left(k+1\right)
	%\leq k^2 + 2dk$$
	%
	%$$k^2   + k + 2(d-1)k + 2(d-1) \leq k^2 + 2dk$$
	%
	%$$k + 2dk -2k + 2(d-1) \leq 2dk$$
	%
	%$$2(d-1) \leq k $$
	%
	%
	%
	%
\end{proof}

%\cref{cfchi-hexgrid} also follows from the above-mentioned result of
%\citet{DS20} since $P^{\boxtimes d}_{n}$ and all its subgraphs admit
%strongly sublinear separators (why for the subgraphs?)

\cref{cfchi-hexgrid} can also be deduced from a result of
\citet{Dvorak16} (who proved that the conclusion holds for any class
of bounded degree having sublinear separators). It can also be deduced
from a result of \citet{BDLM08}, which states that classes of bounded
asymptotic dimension have fractional asymptotic dimension 1 (combined
with the discussion of \cref{sec:asdim} on the connection between
asymptotic dimension and clustered colouring for classes of graphs of
bounded degree). Note that the main result of \cite{BDLM08}, which
states that if $\mathcal{F}_1$ and $\mathcal{F}_2$ have asymptotic
dimension $m_1$ and $m_2$, respectively, then $\mathcal{F}_1\boxtimes
\mathcal{F}_2$ has asymptotic dimension $m_1+m_2$, is obtained by
combining the result on fractional asymptotic dimension mentioned
above with an elaborate version of \cref{lem:comment}.

\begin{lem}
	\label{ConsistentCycle}
	For every $k\in\NN$, every cycle has a consistent $(k+1\!:\!k)$-colouring with clustering $k^2+3k-1$. 
\end{lem}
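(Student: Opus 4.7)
I will handle short and long cycles separately, using the path colouring from \cref{ConsistentPath} together with a ``constant block'' to close up the cycle.

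\emph{Short cycles} ($n \le k(k+1)$). Assign every vertex the same label $(1,2,\ldots,k)$. Two adjacent vertices with identical labels trivially agree on the positions of their common colours, so this is a consistent $(k+1\!:\!k)$-colouring. Each of the $k$ used colours forms a single monochromatic component of size $n \le k^2 + k \le k^2 + 3k - 1$, and colour $k+1$ is unused.

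\emph{Long cycles} ($n > k(k+1)$). Let $v_0,v_1,\ldots,v_{n-1}$ be the vertices of $C_n$ in cyclic order, and choose $L \in \{1,\ldots,k(k+1)\}$ with $L \equiv n+1 \pmod{k(k+1)}$. Label the ``constant block'' $v_0,\ldots,v_{L-1}$ all with $(1,2,\ldots,k)$, and label the remaining vertices using the path construction of \cref{ConsistentPath}: view $v_{L-1},v_L,\ldots,v_{n-1},v_0$ as a path with the cyclic edge $v_{n-1}v_0$ as its last edge, start at $v_{L-1}$ with label $(1,2,\ldots,k)$, and follow the edge pattern $E_1,E_2,\ldots,E_k,E_1,\ldots$ as in \cref{ConsistentPath}. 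By the choice of $L$ the number of pattern edges traversed is $n-L+1$, which is a positive multiple of $k(k+1)$; since the pattern has period $k(k+1)$ (each coordinate is swapped with the missing element exactly $k+1$ times per period), the labelling returns to $(1,2,\ldots,k)$ at $v_0$, matching the block label and closing the cycle consistently. Identity edges inside the block are consistent, and pattern edges (including the two interface edges $v_{L-1}v_L$ and $v_{n-1}v_0$) are consistent by \cref{ConsistentPath}.

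\emph{Clustering.} Away from the block, the monochromatic components are exactly those of the path colouring of \cref{ConsistentPath}, so they have size at most $k$. For each colour $c \in [k]$, the entire constant block lies in a single monochromatic component of colour $c$, which extends into the path part on both sides. Using the periodicity of the missing-colour sequence (colour $c \in [k]$ is missing exactly at pattern positions $p \equiv c \pmod{k+1}$, and colour $k+1$ at $p \equiv 0$), the forward extension from $v_{L-1}$ contains exactly $c-1$ vertices (positions $1,\ldots,c-1$ all contain colour $c$, but position $c$ does not) and the backward extension from $v_0$ contains exactly $k-c$ vertices. Hence the large monochromatic component for colour $c$ has size $L + (c-1) + (k-c) = L + k - 1 \le k(k+1) + k - 1 = k^2 + 2k - 1 \le k^2 + 3k - 1$. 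Colour $k+1$ is absent from the block, so its components keep size $k$.

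\emph{Main obstacle.} The main technical point is choosing $L$ so that the path pattern closes up consistently at $v_0$, and tracking how far each colour extends from the block into the path on both sides; beyond this bookkeeping, the proof is a direct application of \cref{ConsistentPath}.
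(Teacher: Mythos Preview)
Your proof is correct and follows essentially the same strategy as the paper: colour a long arc of the cycle with the periodic path colouring from \cref{ConsistentPath}, then fill the remaining short arc with a constant label block, using the period $k(k+1)$ to make the two ends match. Your clustering analysis is in fact slightly sharper than the paper's---by tracking exactly how far each colour $c\in[k]$ extends from the block (namely $c-1$ vertices forward and $k-c$ backward), you obtain $L+k-1\le k^2+2k-1$, whereas the paper simply bounds each tail by $k$ to get $k^2+3k-1$.
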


\begin{proof}
	Let $C=(v_1,\dots,v_n)$ be an $n$-vertex cycle. Consider integers $a$ and $b\in[0,k(k+1)-1]$ such that $n= ak(k+1)+b$. By \cref{ConsistentPath}, the path $(v_1,\dots,v_{n-b})$ has a consistent $(k+1\!:\!k)$-colouring with clustering $k$. Observe that the colour sequences assigned to vertices repeat every $k(k+1)$ vertices. Thus $v_1$ and $v_{n-b}$ are assigned the same sequence of $k$ colours. Give this colour sequence to all of $v_{n-b+1},\dots,v_n$. We obtain a consistent $(k+1\!:\!k)$-colouring of $C$ with clustering $k+k(k+1)-1+k=k^2+3k-1$.
\end{proof}

\cref{ConsistentPath,ConsistentCycle} imply that for every
$\epsilon>0$ there exists $c\in\NN$, such that every graph with
maximum degree 2 is fractionally $(1+\epsilon)$-colourable with clustering $c$. 
%$\epsilon = 1/k$
%$k = 1/\epsilon $
%$c = k^2+3k-1 = (1/\epsilon)^2+3/\epsilon-1 $
Thus the fractional clustered chromatic number of graphs with maximum
degree 2 equals 1. The following open problem naturally arises:

\begin{qu}\label{q:clusteringDelta}
	What is the fractional clustered chromatic number of graphs with
	maximum degree $\Delta$?
\end{qu}

We now show that the same lower bound from the non-fractional setting (see \citep{WoodSurvey}) holds in the fractional setting.

\begin{prop}
	For every even integer $\Delta$, the fractional clustered chromatic number of the class of graphs with maximum degree $\Delta$ is at least $\frac{\Delta}{4}+\half$.
\end{prop}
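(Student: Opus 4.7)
The plan is to adapt the proof of the non-fractional lower bound $\cchi(\GG_\Delta) \geq \tfrac{\Delta}{4}+\tfrac{1}{2}$ from \citep{WoodSurvey}, where $\GG_\Delta$ denotes the class of graphs of maximum degree $\Delta$. Writing the bound as $\tfrac{\Delta+2}{4}$, the non-fractional argument rests on the following isoperimetric ingredient: for every $c \in \NN$ and every real $\rho > \tfrac{4}{\Delta+2}$, there exists a graph $G_{\rho,c}$ of maximum degree $\Delta$ such that every vertex subset $S \subseteq V(G_{\rho,c})$ with $|S| \geq \rho |V(G_{\rho,c})|$ induces a subgraph $G_{\rho,c}[S]$ containing a connected component of size greater than $c$. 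Such graphs are supplied by large random $\Delta$-regular graphs (or explicit Ramanujan-type expanders), where spectral expansion forces every sufficiently dense subset to contain a giant component in its induced subgraph, and hence prevents a decomposition into pieces of bounded size. I would invoke this as a black box.

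Granting the ingredient, the fractional statement follows by a one-line pigeonhole. Suppose for contradiction that $\cfchi(\GG_\Delta) < \tfrac{\Delta+2}{4}$. Then there exist $t < \tfrac{\Delta+2}{4}$ and $c \in \NN$ such that every graph in $\GG_\Delta$ admits a $(p\!:\!q)$-colouring with $p/q \leq t$ and clustering at most $c$. Fix $\rho$ with $\tfrac{4}{\Delta+2} < \rho < \tfrac{1}{t}$ and let $G := G_{\rho,c}$. Apply such a colouring $f$ to $G$: since each vertex of $G$ receives $q$ colours out of $p$, we have $\sum_{\alpha} |\{v : \alpha \in f(v)\}| = q|V(G)|$, so by pigeonhole some colour class $S_\alpha := \{v : \alpha \in f(v)\}$ satisfies
$$|S_\alpha| \;\geq\; q|V(G)|/p \;\geq\; |V(G)|/t \;>\; \rho |V(G)|.$$
The defining property of $G_{\rho,c}$ then guarantees that $G[S_\alpha]$ contains a connected component of size more than $c$, contradicting the clustering assumption on $f$.

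The main (and essentially only) obstacle is the combinatorial ingredient producing $G_{\rho,c}$, but that is precisely the content of the non-fractional lower bound and requires no modification. The genuinely new content of the fractional version is the observation that a $(p\!:\!q)$-colouring has a colour class of density at least $q/p \geq 1/t$, which is exactly the density threshold demanded by the ingredient, so that the non-fractional proof transfers verbatim once this single substitution is made.
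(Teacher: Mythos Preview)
Your pigeonhole reduction is sound and is the contrapositive of what the paper does: the paper bounds every colour class by $|S_\alpha| < |V(H)|$ and sums over the $p$ colours to obtain $q|V(G)| = \sum_\alpha |S_\alpha| < p|V(H)|$, while you pigeonhole to find one $\alpha$ with $|S_\alpha| \geq q|V(G)|/p$ and derive a contradiction from that same per-class bound. So once the isoperimetric ingredient is in hand, the two arguments coincide.

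The gap is in how you source that ingredient. Random $\Delta$-regular graphs and Ramanujan graphs do \emph{not} satisfy it: both contain independent sets of density well above $\tfrac{4}{\Delta+2}$ for large $\Delta$ (roughly $\tfrac{2\ln\Delta}{\Delta}$ and $\Theta(1/\sqrt{\Delta})$ respectively), and an independent set is a subset in which every component has size $1$. Spectral expansion controls edge counts between sets, not the component structure of induced subgraphs at this density scale, so ``spectral expansion forces every sufficiently dense subset to contain a giant component'' is simply false at threshold $\tfrac{4}{\Delta+2}$.

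The construction that does work --- and the one the paper uses, following the non-fractional argument in \citep{WoodSurvey} --- is $G = L(H)$, the line graph of a $(\tfrac{\Delta}{2}+1)$-regular graph $H$ of girth greater than $c$ (such $H$ exist by Erd\H{o}s--Sachs). Then $G$ is $\Delta$-regular, and if $S \subseteq V(G)$ induces only components of size at most $c$, each such component corresponds to a connected subgraph of $H$ with at most $c$ edges, hence a tree by the girth hypothesis; thus $S$ corresponds to a forest in $H$ and $|S| \leq |V(H)| - 1 < \tfrac{4}{\Delta+2}|V(G)|$. This is exactly your ingredient at the sharp threshold, and plugging it into your pigeonhole step recovers the paper's proof.
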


\begin{proof}
	% Assume for the sake of contradiction that the class of graphs with
	% maximum degree $\Delta$ has fractional clustered chromatic number less
	% than $\frac{\Delta}{4}+\half$. Then there exist a constant $c$, such
	% that for every graph $G$ of maximum
	% degree $\Delta$, there exist $p,q$ with $\tfrac{p}{q}<\frac{\Delta}{4}+\half$, such that
	% $G$ is $(p\!:\!q)$-colourable with clustering $c$.
	
	% Let $H$ be a $(\frac{\Delta}{2}+1)$-regular graph with girth greater than
	% $c$ (such a graph exists, as proved by \citet{ES63}), and let $G=L(H)$
	% be the line-graph of $H$. Note that $G$ is $\Delta$-regular. Let $p,q$
	% with $\tfrac{p}{q}<\frac{\Delta}{4}+\half$ be such that
	% $G$ is $(p\!:\!q)$-colourable with clustering $c$, and let $f$ be such
	% a colouring. 
	% Consider an $f$-monochromatic subgraph of $G$ with vertex set $X$, so every component
	% of $G[X]$ has at most $c$ vertices. Let $F_X$ be the set of edges of $H$
	% corresponding to the vertices of $X$ in $G$.
	
	% Since $H$ has girth greater than $c$, the subgraph of $H$ determined
	% by the edges of $F$ is a forest, and thus $|X|=|F_X|< |V(H)|$. There are $p$ such monochromatic subgraphs and each vertex of $G$ is in exactly $q$ such subgraphs. Thus 
	% $$ q \half (\tfrac{\Delta}{2}+1) |V(H)| = q |E(H)| = q |V(G)| = \sum_X |X| = \sum_{X} |F_X| < p |V(H)|.$$
	% Hence
	% $\frac{p}{q} >
	% \frac{\Delta}{4} + \half $, which is the desired
	% contradiction. Hence the fractional clustered chromatic number
        % of the class of graphs with maximum degree $\Delta$ is at
        % least $\frac{\Delta}{4}+\half$.
%
%
        We need to prove that for any even integer $\Delta$ and any integer $c$, there is a graph
        $G$ of maximum degree $\Delta$ such that for any integers $p,q$, if 
        $G$ is $(p\!:\!q)$-colourable with clustering $c$, then $\tfrac{p}{q}\ge\frac{\Delta}{4}+\half$.

        Fix an even integer $\Delta$ and an integer $c$, and consider a
        $(\frac{\Delta}{2}+1)$-regular graph $H$ with girth greater than
	$c$ (such a graph exists, as proved by \citet{ES63}). Let $G=L(H)$
	be the line-graph of $H$. Note that $G$ is $\Delta$-regular. Let $p,q$
	 be such that
	$G$ is $(p\!:\!q)$-colourable with clustering $c$, and let $f$ be such
	a colouring. 
	Consider an $f$-monochromatic subgraph of $G$ with vertex set $X$, so every component
	of $G[X]$ has at most $c$ vertices. Let $F_X$ be the set of edges of $H$
	corresponding to the vertices of $X$ in $G$.
	
	Since $H$ has girth greater than $c$, the subgraph of $H$ determined
	by the edges of $F$ is a forest, and thus $|X|=|F_X|< |V(H)|$. There are $p$ such monochromatic subgraphs and each vertex of $G$ is in exactly $q$ such subgraphs. Thus 
	$$ q \half (\tfrac{\Delta}{2}+1) |V(H)| = q |E(H)| = q |V(G)| = \sum_X |X| = \sum_{X} |F_X| < p |V(H)|.$$
	Hence
	$\frac{p}{q} >
	\frac{\Delta}{4} + \half $, as desired.
\end{proof}

The $\Delta=3$ case of \cref{q:clusteringDelta} is an interesting problem. The line graph of the 1-subdivision of a high girth cubic graph provides a lower bound of $\frac{6}{5}$ on the fractional clustered chromatic number.

%%%%%%%%%%%%%%%%%%%%	
\subsection{Trees and Treewidth}\label{sec:treestreewidth}

\cref{ConsistentPath} is generalised for bounded degree trees as follows:

\begin{lem}
	\label{ConsistentTree}
	For all $k,\Delta\in\NN$, every tree $T$ with maximum degree $\Delta\geq 3$ has a consistent $(k+1\!:\!k)$-colouring with clustering less than  $2(\Delta-1)^{k-1}$.
%	\begin{equation*}
%		\begin{cases}
%			k & \text{ if }\Delta=2\\
%			\frac{(\Delta-1)^k-1}{\Delta-2} & \text{ otherwise}.		
%		\end{cases}
%	\end{equation*}
\end{lem}

\begin{proof}
	If $k=1$ then a proper 2-colouring of $T$ suffices. Now assume that $k\geq 2$. Root $T$ at some leaf vertex $r$. Consider the edge-partition $E_0,\dots,E_{k-1}$ of $T$, where $E_i$ is the set of edges $uv$ in $T$ such that $u$ is the parent of $v$ and $\dist_T(r,u)\equiv i\pmod{k}$. Each component $X$ of $T-E_i$ has height at most $k-1$ and each vertex $v$ in $X$ has at most $\Delta-1$ children in $X$, implying 
	$|V(X)|\leq 	1+(\Delta-1)+(\Delta-1)^2+\dots+(\Delta-1)^{k-1}=
	\frac{(\Delta-1)^k-1}{\Delta-2}
	<2(\Delta-1)^{k-1}$. 	The result then follows from \cref{EdgePartitionConsistentColouring}. 
\end{proof}

\cref{Consistent,ConsistentTree} imply the following generalisation of 
\cref{MultiplyPath}:

\begin{lem}
\label{MultiplyTree}
If a graph $G$ is $k$-colourable with clustering $c$ and $T$ is a tree with maximum degree $\Delta\geq 3$, then $G\boxtimes T$ is $(k+1)$-colourable with clustering less than $2c(\Delta-1)^{k-1}$. 
\end{lem}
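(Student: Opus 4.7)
The plan is to combine \cref{ConsistentTree} with \cref{Consistent}. Given a graph $G$ that is $k$-colourable with clustering $c$, view this colouring as a $(k\!:\!1)$-colouring of $G$ with clustering $c$. Given a tree $T$ of maximum degree $\Delta$, apply \cref{ConsistentTree} to obtain a consistent $(k+1\!:\!k)$-colouring of $T$ whose clustering we will denote by $c_1$. Since $G\boxtimes T \cong T \boxtimes G$, \cref{Consistent} combines these into a $(k+1\!:\!1)$-colouring of $G\boxtimes T$ with clustering $c_1 \cdot c$; that is, a $(k+1)$-colouring of $G\boxtimes T$ with clustering $c\cdot c_1$.

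All that remains is to verify the numerical bound $c_1 \leq \Delta^{k-1}$. For $k=1$ the consistent tree colouring has clustering $1 = \Delta^{0}$, and the conclusion is immediate. For $k\geq 2$, \cref{ConsistentTree} gives $c_1 \leq 1 + a + a^2 + \dots + a^{k-1}$ where $a := \lceil \tfrac{k-2}{k-1}(\Delta-1)\rceil \leq \Delta-1$ (with the third case being absorbed into this geometric-sum bound, since there the clustering is $k \leq 1 + 1 + \dots + 1 \leq \Delta^{k-1}$ when $\Delta \geq 2$). I would then verify the clean inequality
\[
\sum_{i=0}^{k-1}(\Delta-1)^{i}\;\leq\;\Delta^{k-1}
\]
by a short induction on $k$: the base $k=1,2$ are trivial, and the step uses
\[
1+(\Delta-1)\sum_{i=0}^{k-2}(\Delta-1)^{i}\;\leq\;1+(\Delta-1)\Delta^{k-2}\;\leq\;\Delta^{k-1},
\]
since $\Delta^{k-1} - (\Delta-1)\Delta^{k-2} = \Delta^{k-2} \geq 1$.

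The only real subtlety is the bookkeeping: one must be careful that the $k=1$ and the small-$\Delta$ edge cases of \cref{ConsistentTree} (where its formula degenerates) are handled separately, and that $a \leq \Delta - 1$ is used rather than $a \leq \Delta$, since otherwise the bound $\Delta^{k-1}$ would not be tight enough. Apart from that, the proof is a direct two-line application of the consistent-colouring machinery developed earlier in the section.
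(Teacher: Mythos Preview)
Your argument is correct: applying \cref{Consistent} with the consistent $(k+1\!:\!k)$-colouring of $T$ from \cref{ConsistentTree} and the $(k\!:\!1)$-colouring of $G$ yields a $(k+1)$-colouring of $T\boxtimes G\cong G\boxtimes T$ with clustering $c\cdot c_1$, and your case analysis verifying $c_1\le\Delta^{k-1}$ from the three clauses of \cref{ConsistentTree} is sound (including the observation that the third clause only arises when $\Delta\ge 2$, and the geometric-sum estimate $\sum_{i=0}^{k-1}(\Delta-1)^i\le\Delta^{k-1}$).

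However, this is not the route the paper takes. The paper does \emph{not} invoke \cref{ConsistentTree} at all; instead it first colours $G\boxtimes P$ for an infinite path $P=(x_0,x_1,\dots)$ via \cref{MultiplyPath}, notes that each monochromatic component there has the form $X\times I$ for a monochromatic component $X$ of $G$ and an interval $I$ of $k$ consecutive path vertices, and then pulls this colouring back to $G\boxtimes T$ through the depth map $w\mapsto x_{d(w)}$ (after rooting $T$). Monochromatic components in $G\boxtimes T$ then have the form $X\times T'$ where $T'$ is a connected subtree spanning $k$ consecutive depth levels, and the bound $|V(T')|\le\Delta^{k-1}$ drops out immediately with no case distinction. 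Your approach is the more ``modular'' one---it uses exactly the machinery set up in \cref{ConsistentTree,Consistent} and treats trees on the same footing as paths---at the cost of the somewhat fiddly numerical bookkeeping you describe. The paper's depth-projection trick trades that bookkeeping for a one-line geometric observation, but relies on the specific structure of the path colouring rather than on \cref{ConsistentTree}.
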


\cref{MultiplyTree} leads to the next result. We emphasise that $T_1$
may have arbitrarily large maximum degree (if $T_1$ also has bounded
degree then the result is again a simple consequence of
\cref{lem:comment}, which does not use consistent colourings). 

\begin{thm}
	\label{TreeProduct}
	If $T_1,\dots,T_d$ are trees, such that each of $T_2,\dots,T_d$ have maximum degree at most $\Delta\geq3$, then $T_1\boxtimes \dots\boxtimes T_d$ is $(d+1)$-colourable with clustering less than $2^d(\Delta-1)^{ \binom{d}{2} }$.
\end{thm}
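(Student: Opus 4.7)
The plan is to prove this by induction on $d$, using \cref{MultiplyTree} as the engine of the induction. Crucially, \cref{MultiplyTree} requires the \emph{tree} factor to have bounded degree, but makes no assumption on the degree of the graph $G$; this matches the hypothesis of the theorem, where $T_1$ is allowed to have arbitrarily large maximum degree and only $T_2,\dots,T_d$ must have degree at most $\Delta$.

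For the base case $d=1$, the tree $T_1$ is properly $2$-colourable, which has clustering $1=\Delta^{0}=\Delta^{\binom{1}{2}}$, as required.

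For the inductive step, assume the result holds for $d-1$. Applying the inductive hypothesis to the trees $T_1,\dots,T_{d-1}$ (note that $T_2,\dots,T_{d-1}$ still have maximum degree at most $\Delta$), the graph $G:=T_1\boxtimes\cdots\boxtimes T_{d-1}$ is $d$-colourable with clustering at most $\Delta^{\binom{d-1}{2}}$. Now apply \cref{MultiplyTree} with this $G$, the parameter $k:=d$, clustering $c:=\Delta^{\binom{d-1}{2}}$, and the tree $T:=T_d$ (which has maximum degree at most $\Delta$). The conclusion is that $G\boxtimes T_d=T_1\boxtimes\cdots\boxtimes T_d$ is $(d+1)$-colourable with clustering at most
\[
c\,\Delta^{k-1}=\Delta^{\binom{d-1}{2}}\cdot\Delta^{d-1}=\Delta^{\binom{d-1}{2}+(d-1)}=\Delta^{\binom{d}{2}},
\]
using the elementary identity $\binom{d-1}{2}+(d-1)=\binom{d}{2}$. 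This completes the induction.

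There is no real obstacle here: the structure of the clustering bound $\Delta^{\binom{d}{2}}$ is precisely what comes out of iterating \cref{MultiplyTree}, since at the $j$-th step we multiply the existing clustering by $\Delta^{j-1}$, and $\sum_{j=1}^{d}(j-1)=\binom{d}{2}$. The only subtle point worth flagging is the asymmetric role of $T_1$ versus the other trees, which is handled by arranging the induction so that $T_1$ is always absorbed into the graph $G$ (whose degree is irrelevant) rather than playing the role of the tree $T$ in \cref{MultiplyTree}.
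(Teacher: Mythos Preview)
Your proof is correct and matches the paper's argument essentially line for line: induction on $d$, base case $d=1$ via a proper 2-colouring of $T_1$, and the inductive step via \cref{MultiplyTree} applied with $G=T_1\boxtimes\cdots\boxtimes T_{d-1}$ and $T=T_d$, yielding clustering $\Delta^{\binom{d-1}{2}}\cdot\Delta^{d-1}=\Delta^{\binom{d}{2}}$. Your remark about why $T_1$ must sit inside $G$ rather than play the role of $T$ is exactly the point behind the asymmetric hypothesis.
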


\begin{proof}
	We proceed by induction on $d\geq 1$. In the base case, $T_1$ is 2-colourable with clustering 1. Now assume that $T_1\boxtimes \dots \boxtimes T_{d-1}$ is $d$-colourable with clustering less than $2^{d-1}(\Delta-1)^{ \binom{d-1}{2} }$. \cref{MultiplyTree} with $G=T_1\boxtimes\dots\boxtimes T_{d-1}$ implies that $T_1\boxtimes\dots\boxtimes T_{d}$ is $(d+1)$-colourable with clustering  less than $2\cdot 2^{d-1}(\Delta-1)^{ \binom{d-1}{2} } (\Delta-1)^{d-1} = 2^d(\Delta-1)^{ \binom{d}{2} }$. 
\end{proof}

% (d-1)(d-2)/2 + (d-1) = (d^2-3d + 2 + 2d-2)/2 = (d^2 - d )/2 = d(d-1)/2 

\cref{TreeProduct} is in sharp contrast with \cref{StarProduct}: for the strong product of $d$ stars we need $2^d$ colours even for defective colouring, whereas for bounded degree trees we only need $d+1$ colours in the stronger setting of clustered colouring. This highlights the power of assuming bounded degree in the above results. 

Let	$\TT_k$ be the class of graphs with treewidth $k$. Such graphs
are $k$-degenerate and $(k+1)$-colourable. Since the graph $C_{n,k}$ (defined in
\cref{sec:ptc}) has treewidth $k-1$, \cref{FracClosureProduct} implies that 
$$\dfchi(\TT_k) = \cfchi(\TT_k) = \fchi(\TT_k) = \dchi(\TT_k) = \cchi(\TT_k) = \bigchi(\TT_k) = k+1.$$
\citet{ADOV03} showed that graphs of bounded treewidth and bounded
degree are 2-colourable with bounded clustering. Note that
\cref{DegreeTreewidthClustered} in \cref{sec:asdim} generalises this result ($d=1$) and generalises \cref{TreeProduct}
($k=1$). We now give a short proof of \cref{DegreeTreewidthClustered}
(which we restate below for convenience) that does not use asymptotic dimension, or any results related to it.

\DegreeTreewidthClustered*

\begin{proof}
	By \cref{DegreeTreewidthStructure}, $G_i$ is a subgraph of $T_i \boxtimes K_{20k\Delta}$ for some tree $T_i$ with maximum degree at most $20k\Delta^2$. Thus  $G_1\boxtimes \dots\boxtimes G_d$ is  a subgraph of $ ( T_1 \boxtimes K_{20k\Delta} ) \boxtimes \dots \boxtimes ( T_d \boxtimes K_{20k\Delta} )$, which is a subgraph of $(T_1\boxtimes \dots\boxtimes T_d ) \boxtimes  K_t$, where $t:= (20k\Delta)^d$.  By \cref{TreeProduct},  $T_1\boxtimes \dots\boxtimes T_d$ is $(d+1)$-colourable with clustering at most some function $c'(d,k,\Delta)$. By \cref{BlowUp}, $(T_1\boxtimes \dots\boxtimes T_d ) \boxtimes  K_t$ and thus $G_1\boxtimes \dots\boxtimes G_d$ is $(d+1)$-colourable with clustering $c(d,k,\Delta) :=c'(d,k,\Delta) \cdot t$.
\end{proof}

The next result shows that for any sequence of non-trivial classes
$\GG_1,\dots,\GG_d$, the bound on the number of colours in
\cref{DegreeTreewidthClustered} is best possible.

\begin{thm}
	If $\GG_1,\dots,\GG_d$ are graph classes with $\cchi(\GG_i)\geq 2$ for each $i\in\{1,\dots,d\}$, then $\cchi(\GG_1\boxtimes \dots \boxtimes \GG_d) \geq d+1$.
\end{thm}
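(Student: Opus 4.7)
The plan is to argue by induction on $d$. The base case $d=1$ is immediate, since $\cchi(\GG_1)\geq 2 = d+1$ holds by hypothesis.

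For the inductive step, suppose the statement holds for $d-1$ and let $\GG_1,\dots,\GG_d$ be graph classes with $\cchi(\GG_i)\geq 2$ for each $i$. Suppose, for a contradiction, that $\cchi(\GG_1\boxtimes\cdots\boxtimes\GG_d)\leq d$; then there is a constant $c\in\NN$ such that every graph in $\GG_1\boxtimes\cdots\boxtimes\GG_d$ admits a $d$-colouring with clustering at most $c$. The aim is to produce, from such a colouring, a $(d-1)$-colouring of some graph $G'\in\GG_1\boxtimes\cdots\boxtimes\GG_{d-1}$ with bounded clustering $c^*$ (depending on $c$ and $d$), contradicting the inductive hypothesis, which guarantees the existence of $G'\in\GG_1\boxtimes\cdots\boxtimes\GG_{d-1}$ admitting no $(d-1)$-colouring with clustering $c^*$ for any prescribed $c^*$.

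Concretely, invoking $\cchi(\GG_d)\geq 2$ we choose $G_d\in\GG_d$ containing a connected component $H_d$ of order strictly greater than $c$. The graph $G:=G'\boxtimes G_d$ lies in $\GG_1\boxtimes\cdots\boxtimes\GG_d$ and hence admits a $d$-colouring $f$ with clustering at most $c$. The key local observation is that, for every $u\in V(G')$, the restriction of $f$ to the fibre $\{u\}\times V(H_d)$ is a $d$-colouring of a copy of the connected graph $H_d$ with clustering at most $c$; since $|V(H_d)|>c$, this restriction must use at least two distinct colours. So in every fibre over $G'$ the colouring $f$ displays a genuine two-colour pattern inherited from the non-trivial factor $\GG_d$, and the plan is to exploit this to save one colour when projecting down to $G'$.

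The main obstacle is precisely the projection/merging step. Directly merging two colours of $f$ on a fixed horizontal slice $V(G')\times\{v_0\}$ can blow up clustering uncontrollably when the two colour classes are interleaved along $G'$. To overcome this I would apply a pigeonhole argument to the $\binom{d}{2}$ possible pairs of colours that appear in each fibre (for instance, the colours at a designated pair of adjacent vertices of $H_d$), and restrict attention to the vertices of $G'$ whose fibre exhibits a common pair $\{\alpha,\beta\}$. On this restricted vertex set, the connectedness of $H_d$ together with the clustering bound $c$ confines any merged monochromatic component to live inside the union of $O(|V(H_d)|)$ clusters of $f$, so that merging $\alpha$ and $\beta$ produces a $(d-1)$-colouring with clustering bounded by an explicit function $c^*(c,d,|V(H_d)|)$. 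Setting $c^*$ to this bound at the outset yields the desired contradiction and completes the induction; the delicate point, and the step I expect to be the hardest, is proving the quantitative clustering bound after merging, where both the connectedness of $H_d$ and the strong-product structure between $G'$ and $G_d$ are essential.
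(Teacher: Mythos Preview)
Your inductive plan has a genuine gap at the projection step, and it is not merely a matter of filling in estimates.

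After you restrict to the set $S_{\alpha\beta}:=\{u\in V(G'): \{f(u,v_0),f(u,v_1)\}=\{\alpha,\beta\}\}$, every vertex of $S_{\alpha\beta}$ already has $f(\cdot,v_0)\in\{\alpha,\beta\}$. Thus once you merge $\alpha$ and $\beta$ you are not producing a $(d-1)$-colouring of $G'[S_{\alpha\beta}]$ at all: you are producing the trivial $1$-colouring. Your clustering claim then reduces to the (true, and nice) observation that every connected component of $G'[S_{\alpha\beta}]$ has at most $c$ vertices, because across any edge $uu'$ of $G'[S_{\alpha\beta}]$ the diagonal edges of the strong product link the $\alpha$-vertex in $\{u\}\times\{v_0,v_1\}$ to the $\alpha$-vertex in $\{u'\}\times\{v_0,v_1\}$, so a whole component of $G'[S_{\alpha\beta}]$ sits inside a single $f$-cluster. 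But a subgraph of $G'$ whose components have at most $c$ vertices is trivially $1$-colourable with clustering $c$; this says nothing whatsoever about $(d-1)$-colourability of $G'$ itself, which is what you need to contradict the inductive hypothesis. The sets $S_{\alpha\beta}$ (together with the ``diagonal'' set $\{u:f(u,v_0)=f(u,v_1)\}$) merely give a partition of $V(G')$ into $\binom{d}{2}+d$ pieces each inducing small components; that is a $\Theta(d^2)$-colouring with bounded clustering, not a $(d-1)$-colouring. No pigeonhole over the pairs recovers what is lost, because you need a colouring of \emph{all} of $G'$, and $G'[S_{\alpha\beta}]$ need not lie in $\GG_1\boxtimes\cdots\boxtimes\GG_{d-1}$ anyway.

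The paper's argument takes a completely different route and avoids this obstacle. It first passes to monotone closures and observes that $\cchi(\GG_i)\geq 2$ forces each $\GG_i$ to contain either all paths or all stars; hence it suffices to bound $\cchi\big((\boxtimes_k\mathcal{P})\boxtimes(\boxtimes_{d-k}\mathcal{S})\big)$ from below. The heart of the proof is then a ``star-to-path'' lemma: from a $k$-colouring with clustering $c$ of $G\boxtimes K_{1,n}$ for $n=c\cdot k^{|V(G)|}$, a pigeonhole over the $k^{|V(G)|}$ possible leaf-slice colourings yields two colourings $f_c$ (central copy) and $f_l$ (a repeated leaf copy) of $G$ with $f_l(v)\neq f_c(v)$ and $f_l(u)\neq f_c(v)$ for every edge $uv$, so one can alternate $f_c,f_l$ along any path $P$ to colour $G\boxtimes P$. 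This reduces everything to products of paths, and the $d$-dimensional Hex Lemma finishes the lower bound. If you want to rescue an inductive approach, the missing ingredient is exactly this kind of global argument that manufactures a colouring of the \emph{whole} of $G'$ (or of $G'\boxtimes P$) rather than of a sparse induced subgraph.
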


\begin{proof}
	By replacing each class $\GG_i$ by its monotone closure if
	necessary, we can assume without loss of generality that each class
	$\GG_i$ is monotone (i.e., closed under taking subgraphs).
	If there is a constant $d$ such that every component of a graph of $\GG_i$ has
	maximum degree at most $d$ and diameter at most $d$, then 
	$\cchi(\GG_i)\le 1$. It follows that for any $1\le i\le d$, the
	graphs of the class  $\GG_i$ contain arbitrarily large degree
	vertices or arbitrarily long paths. By monotonicity, it follows that
	for some constant $1\le k \le d$,
	$\GG_1\boxtimes \dots \boxtimes \GG_d$ contains the class
	$(\boxtimes_k \mathcal{P} )\boxtimes (\boxtimes_{d-k}\mathcal{S})$,
	where $\mathcal{P}$ denotes the class of all paths, and
	$\mathcal{S}$ denotes the class of all stars.
	
	We claim that for any graph $G$, $\cchi(G\boxtimes \mathcal{P})\le
	\cchi(G\boxtimes \mathcal{S})$. To see this, assume that $G\boxtimes
	\mathcal{S}$ is $\ell$-colourable with clustering $c$, and consider
	an $\ell$-colouring $f$ of $G\boxtimes K_{1,n}$ with clustering $c$, with $n=c\cdot
	\ell^{|V(G)|}$. The graph $G\boxtimes K_{1,n}$ can be considered as
	the union of $n+1$ copies of $G$, one copy for the centre of the star
	$K_{1,n}$ (call it the central copy of $G$), and $n$ copies for the leafs
	of $K_{1,n}$ (call them the leaf copies of $G$). By the pigeonhole
	principle, at least $c$ leaf copies $G_1,\ldots,G_c$ of $G$ have precisely the same
	colouring, that is for each vertex $u$ of $G$, and any two copies
	$G_i$ and $G_j$ with $1\le i<j\le c$, the two copies of $u$ in $G_i$
	and $G_j$ have the same colour in $f$. Let us denote this colouring of
	$G$ by $f_l$, and let us denote the restriction of $f$ to the
	central copy by $f_c$ (considered as a colouring of $G$). Note that
	for any vertex $v$ of $G$ we have $f_l(v)\ne f_c(v)$, and for any
	edge $uv$ of $G$ we have $f_l(u)\ne f_c(v)$, otherwise $f$ would
	contain a monochromatic star on $c+1$ vertices. We can now obtain a
	colouring of $G\boxtimes P$, for any path $P$, by alternating the
	colourings $f_l$ and $f_c$ of $G$ along the path. This shows that
	$G\boxtimes \mathcal{P}$ is $\ell$-colourable with clustering $c$.
	
	It follows from the previous paragraph that $\cchi((\boxtimes_k
	\mathcal{P} )\boxtimes (\boxtimes_{d-k}\mathcal{S}))\ge
	\cchi(\boxtimes_k \mathcal{P} )$. By the Hex lemma (see
	\cref{sec:hex}), this implies $\cchi(\GG_1\boxtimes \dots \boxtimes
	\GG_d) \geq d+1$, as desired.
\end{proof}

\subsection{Graph Parameters}\label{sec:param}

We now explain how some results of this paper can be proved in a more general setting. 
For the sake of readability, we chose to present them (and prove them) 
only for the case of (clustered) colouring in the previous sections.

\medskip

A \defn{graph parameter} is a function $\eta$ such that $\eta(G)\in\RR\cup\{\infty\}$ for every graph $G$, and $\eta(G_1)=\eta(G_2)$ for all isomorphic graphs $G_1$ and $G_2$. For a graph parameter $\eta$ and a set of graphs $\GG$, let $\eta(\GG):=\sup\{\eta(G):G\in\GG\}$ (possibly $\infty$).

For a graph parameter $\eta$, a colouring $f:V(G)\to C$ of a graph $G$ has \defn{$\eta$-defect $d$} if  $\eta( G[ f^{-1}(i) ] ) \leq d$ for each $i\in C$. Then a graph class $\GG$ is \defn{$k$-colourable with bounded $\eta$} if there exists $d\in\RR$ such that every graph in $\GG$ has a $k$-colouring with $\eta$-defect $d$. 
Let $\bigchi_\eta(\GG)$ be the minimum integer $k$ such that $\GG$ is $k$-colourable with bounded $\eta$, called the \defn{$\eta$-bounded chromatic number}. 

Maximum degree, $\Delta$, is a graph parameter, and the $\Delta$-bounded chromatic number coincides with the defective chromatic number, both denoted $\dchi(\GG)$. 

Define $\star(G)$ to be the maximum number of vertices in a connected component of a graph $G$. Then $\star$ is a graph parameter, and the $\star$-bounded chromatic number coincides with the clustered chromatic number, both denoted $\cchi(\GG)$. 

These definitions  also capture the usual chromatic number. For every graph $G$, define 
\begin{equation*}
	\iota(G):=\begin{cases}
		1 & \text{ if $E(G)=\emptyset$}\\
		\infty & \text{ otherwise }\\
	\end{cases}
\end{equation*}
For $d\in\RR$, a colouring $f$ of $G$ has $\iota$-defect $d$ if and only if $f$ is proper. Then 
$\bigchi_\iota(\{G\})=\bigchi(G)$.

A graph parameter $\eta$ is \defn{$g$-well-behaved} with respect to a particular graph product $\ast\in\{\square,\boxtimes\}$ if:
\begin{enumerate}[(W1)]
	\item $\eta(H) \leq \eta(G)$ for every graph $G$ and every subgraph $H$ of $G$, 
	\item $\eta( G_1 \cup G_2) = \max\{ \eta(G_1), \eta(G_2) \}$ for all disjoint graphs $G_1$ and $G_2$. 
	\item $\eta( G_1 \ast G_2) \leq g( \eta(G_1), \eta(G_2))$ for all graphs $G_1$ and $G_2$. 
\end{enumerate} 
A graph parameter is \defn{well-behaved} if it is $g$-well-behaved for some function $g$. For example:
\begin{compactitem}
	\item $\Delta$ is $g$-well-behaved with respect to $\square$, where $g(\Delta_1,\Delta_2)=\Delta_1+\Delta_2$. 
	\item $\Delta$ is $g$-well-behaved with respect to $\boxtimes$, where $g(\Delta_1,\Delta_2)=(\Delta_1+1)(\Delta_2+1)-1$.
	\item $\star$ is $g$-well-behaved with respect to $\square$ or $\boxtimes$, where $g(\star_1,\star_2)=\star_1 \star_2$.
	\item $\iota$ is $g$-well-behaved with respect to $\square$ or $\boxtimes$, where $g(\iota_1,\iota_2)= \iota_1 \iota_2 $.
\end{compactitem}
On the other hand, some graph parameters are $g$-well-behaved for no function $g$.
One such example is treewidth, since if $G_1$ and $G_2$ are $n$-vertex paths, then $\tw(G_1)=\tw(G_2)=1$ but  $\tw(G_1\boxtimes G_2) \geq \tw(G_1 \CartProd G_2) = n$, implying there is no function $g$ for which  (W3) holds. 

Let $\eta$ be a graph parameter. A fractional colouring of a graph $G$ has \defn{$\eta$-defect $d$} if $\eta(X)\leq d$ for each monochromatic subgraph $X$ of $G$. 

%Such a colouring is a \defn{fractional $t$-colouring with $\eta$-defect $d$} if $\frac{p}{q}\leq t$. 
A graph class $\GG$ is \defn{fractionally $t$-colourable with bounded
	$\eta$} if there exists $d\in\RR$ such that every graph in $\GG$ has
a fractional $t$-colouring with $\eta$-defect $d$. Let
$\bigchi^f_\eta(\GG)$ be the infimum of all  $t\in\RR^+$ such that
$\GG$ is fractionally $t$-colourable with bounded $\eta$, called the
\defn{fractional $\eta$-bounded chromatic number}.

The next lemma generalises \cref{ChromaticNumber,FractionalProduct}. 

\begin{lem}
	\label{ProductColouringEta}
	Let $\eta$ be a $g$-well-behaved parameter with respect to $\boxtimes$. Let $G_1,\dots,G_d$ be graphs, such that $G_i$ is $(p_i\!:\!q_i)$-colourable with $\eta$-defect $c_i$, for each $i\in[1,d]$. Then $G:= G_1\boxtimes \cdots\boxtimes G_d$ is $(\prod_i p_i\!:\!\prod_i q_i)$-colourable with $\eta$-defect $g( c_1, g( c_2, \ldots , g( c_{d-1}, c_d ) ) )$. 
\end{lem}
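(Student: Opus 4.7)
The plan is to mimic the product-colouring construction used in the proof of \cref{ClusteredProductColouring}. For each $i\in[1,d]$, let $\phi_i$ be a $(p_i\!:\!q_i)$-colouring of $G_i$ with $\eta$-defect $c_i$, and colour each vertex $v=(v_1,\dots,v_d)$ of $G$ by $\phi(v):=\{(a_1,\dots,a_d):a_i\in\phi_i(v_i)\text{ for each }i\in[1,d]\}$. By construction $\phi$ uses $\prod_i p_i$ colours and assigns exactly $\prod_i q_i$ colours to each vertex, so only the $\eta$-defect remains to be bounded.

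Next I would fix a colour $c=(a_1,\dots,a_d)$ and let $X_i$ denote the $\phi_i$-monochromatic subgraph of $G_i$ for colour $a_i$, so $\eta(X_i)\le c_i$ by hypothesis. The $\phi$-monochromatic subgraph $Y$ of $G$ for colour $c$ has vertex set $\{v\in V(G):c\in\phi(v)\}=V(X_1)\times\cdots\times V(X_d)$, and because each $X_i$ is an induced subgraph of $G_i$, a routine check on the adjacency rule of $\boxtimes$ shows that the induced edges of $Y$ in $G$ are precisely the edges of $X_1\boxtimes\cdots\boxtimes X_d$. Hence $Y=X_1\boxtimes\cdots\boxtimes X_d$ as graphs.

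Then I would apply property (W3) iteratively, using associativity of $\boxtimes$: writing $Y=X_1\boxtimes(X_2\boxtimes\cdots\boxtimes X_d)$ gives $\eta(Y)\le g(\eta(X_1),\eta(X_2\boxtimes\cdots\boxtimes X_d))$, and a straightforward induction on $d$ produces the bound $\eta(Y)\le g(c_1,g(c_2,\dots,g(c_{d-1},c_d)))$ claimed in the statement.

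The main subtlety is that this inductive step implicitly requires $g$ to be non-decreasing in its second argument, so that the inner bound on $\eta(X_2\boxtimes\cdots\boxtimes X_d)$ can be substituted inside $g(c_1,\cdot)$. Monotonicity is not part of the definition of well-behavedness, but it is harmless to assume: one may replace $g$ by its monotone envelope $\bar g(a,b):=\sup\{g(a',b'):a'\le a,\ b'\le b\}$, which is non-decreasing in each coordinate and still verifies (W3) thanks to (W1). With this replacement the induction goes through and the theorem follows.
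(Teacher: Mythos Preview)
Your proof is correct and follows essentially the same product-colouring construction as the paper: define $\phi$ as the product of the $\phi_i$, identify each monochromatic subgraph for a colour $(a_1,\dots,a_d)$ with the strong product $X_1\boxtimes\cdots\boxtimes X_d$ of the corresponding monochromatic subgraphs in the factors, and then invoke (W3) inductively.

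Two remarks. First, your treatment is slightly cleaner than the paper's in that you work directly with the full monochromatic \emph{subgraph} $Y$ (which is what the definition of $\eta$-defect requires) rather than with individual monochromatic components; the paper's version would need (W2) to pass from components back to the full subgraph. Second, your observation about monotonicity of $g$ is well taken: the paper simply writes ``It follows from (W3) by induction'' and does not address this point, so the gap you identify is present there too. Your monotone-envelope fix is the standard remedy, and since every application in the paper (to $\Delta$, $\star$, $\iota$) uses a $g$ that is already monotone, nothing downstream is affected.
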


\begin{proof}
	For $i\in[1,d]$, let $\phi_i$ be a $(p_i\!:\!q_i)$-colouring of $G_i$ with $\eta$-defect $c_i$.
	Let $\phi$ be the colouring of $G$, where each vertex $v=(v_1,\dots,v_d)$ of $G$ is coloured $\phi(v) := \{ (a_1,\dots,a_d): a_i \in \phi_i(v_i), i \in [1,d] \}$. So each vertex of $G$ is assigned a set of $\prod_i q_i$ colours, and there are $\prod_i p_i$ colours in total. Let $X:= X_1\boxtimes\dots\boxtimes X_d$, where each $X_i$ is a monochromatic component of $G_i$ using colour $a_i$. Then $X$ is a monochromatic connected induced subgraph of $G$ using colour $(a_1,\dots,a_d)$. Consider any edge $(v_1,\dots,v_d)(w_1,\dots,w_d)$ of $G$ with $(v_1,\dots,v_d)\in V(X)$ and $(w_1,\dots,w_d)\not\in V(X)$. Thus $v_iw_i\in E(G_i)$ and  $w_i\not\in V(X_i)$ for some $i\in\{1,\dots,d\}$. Hence $a_i\not\in \phi_i(w_i)$, implying $(a_1,\dots,a_d)\not\in \phi(w)$. Hence $X$ is a monochromatic component of $G$ using colour $(a_1,\dots,a_d)$. It follows from (W3) by induction that 
	$|V(X)| \leq g( c_1, g( c_2, \ldots , g( c_{d-1}, c_d ) ) )$. 
	Hence $\phi$ has $\eta$-defect $g( c_1, g( c_2, \ldots , g( c_{d-1}, c_d ) ) )$. 
\end{proof}

\cref{ProductColouringEta} implies:

\begin{thm}
	\label{ProductColouringEtaClasses}
	Let $\eta$ be a well-behaved parameter with respect to $\boxtimes$. 
	For all graph classes $\GG_1,\dots,\GG_d$, 
	$$\bigchi_\eta( \GG_1 \boxtimes \dots\boxtimes \GG_d) \leq \prod_{i=1}^d \bigchi_\eta(\GG_i).$$
\end{thm}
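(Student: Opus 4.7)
The plan is to deduce this directly from \cref{ProductColouringEta} by interpreting ordinary $k$-colourings as $(k\!:\!1)$-colourings. First I would dispense with the trivial case: if $\bigchi_\eta(\GG_i)=\infty$ for some $i$, then the right-hand side is $\infty$ and the inequality holds vacuously. So I may assume each $k_i:=\bigchi_\eta(\GG_i)$ is a finite positive integer.

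By the definition of $\bigchi_\eta$, for each $i\in[1,d]$ there exists $c_i\in\RR$ such that every $G_i\in\GG_i$ admits a $k_i$-colouring with $\eta$-defect at most $c_i$. Such a colouring is precisely a $(k_i\!:\!1)$-colouring with $\eta$-defect at most $c_i$. Given any $G_i\in\GG_i$ for $i\in[1,d]$, I would apply \cref{ProductColouringEta} with $p_i=k_i$, $q_i=1$ and defect parameter $c_i$. The conclusion is that $G_1\boxtimes\dots\boxtimes G_d$ has a $(\prod_i k_i\!:\!1)$-colouring — i.e., an ordinary $(\prod_i k_i)$-colouring — with $\eta$-defect at most
$$D := g\bigl( c_1, g( c_2, \ldots , g( c_{d-1}, c_d ) ) \bigr).$$

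The crucial point is that $D$ depends only on the classes $\GG_1,\dots,\GG_d$ (through $c_1,\dots,c_d$) and on the function $g$ witnessing that $\eta$ is well-behaved with respect to $\boxtimes$; it does not depend on the choice of individual graphs $G_i\in\GG_i$. Hence every graph in $\GG_1\boxtimes\dots\boxtimes\GG_d$ is $(\prod_i k_i)$-colourable with $\eta$-defect at most the fixed constant $D$, which by definition yields $\bigchi_\eta(\GG_1\boxtimes\dots\boxtimes\GG_d)\leq\prod_{i=1}^d k_i=\prod_{i=1}^d \bigchi_\eta(\GG_i)$. Since this is essentially a bookkeeping exercise on top of \cref{ProductColouringEta}, there is no real obstacle; the only subtlety is to observe that iterated composition of $g$ produces a finite bound whenever its arguments are finite, which is exactly why the finiteness assumption on the $k_i$ was needed.
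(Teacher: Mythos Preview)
Your proposal is correct and is exactly the argument the paper intends: the paper simply states that \cref{ProductColouringEtaClasses} is implied by \cref{ProductColouringEta}, and your write-up spells out this implication (ordinary $k_i$-colourings are $(k_i\!:\!1)$-colourings, and the resulting $\eta$-defect bound $D$ is uniform over the class). There is nothing to add.
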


We have the following special case of \cref{ProductColouringEta}. 

\begin{lem}
	\label{DefectiveProductColouring}
	For all graphs $G_1,\dots,G_d$, if each $G_i$ is $(p_i\!:\!q_i)$-colourable with defect $c_i$, then $G_1\boxtimes \cdots\boxtimes G_d$ is $(\prod_i p_i\!:\!\prod_i q_i)$-colourable with defect $\prod_i(1+c_i)-1$.
\end{lem}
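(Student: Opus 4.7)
The plan is to deduce this statement as a direct specialisation of \cref{ProductColouringEta} applied with the graph parameter $\eta := \Delta$ (maximum degree). As recorded in the bulleted list preceding that lemma, $\Delta$ is $g$-well-behaved with respect to $\boxtimes$ for $g(a,b) = (a+1)(b+1)-1$. The notion of a $(p\!:\!q)$-colouring having defect $c$ in the sense defined in \cref{sec:frac} coincides exactly with having $\Delta$-defect $c$ in the general framework of \cref{sec:param}, so the hypotheses translate without modification.

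Applying \cref{ProductColouringEta} with this choice of $\eta$ immediately produces a $(\prod_i p_i \!:\! \prod_i q_i)$-colouring of $G_1 \boxtimes \cdots \boxtimes G_d$ whose $\Delta$-defect is the iterated expression
\[ g\bigl(c_1,\, g(c_2, \ldots, g(c_{d-1}, c_d))\bigr). \]
It remains only to check that this unwinds to $\prod_{i=1}^d (1+c_i) - 1$, which I would do by a short induction on $d$. The base $d=2$ is immediate from the definition of $g$. For the inductive step, setting $h := \prod_{i=2}^d(1+c_i) - 1$, we obtain
\[ g(c_1, h) = (c_1+1)(h+1) - 1 = (c_1+1)\prod_{i=2}^d (1+c_i) - 1 = \prod_{i=1}^d(1+c_i) - 1, \]
closing the induction.

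There is essentially no obstacle: the content is packaged in \cref{ProductColouringEta}, and the only work is the one-line verification that the nested applications of $g(a,b) = (a+1)(b+1)-1$ telescope into the claimed product formula.
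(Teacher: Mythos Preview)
Your proposal is correct and matches the paper's approach exactly: the paper simply states this lemma as a special case of \cref{ProductColouringEta} (with $\eta=\Delta$ and $g(a,b)=(a+1)(b+1)-1$), without even spelling out the telescoping computation you supply.
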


\cref{DefectiveProductColouring} implies the following analogues of \cref{ChromaticNumber} for defective colouring.

\begin{thm}
	\label{DefectiveProductColouringClasses}
	For all graph classes $\GG_1,\dots,\GG_d$ 
	$$\dchi( \GG_1 \boxtimes \dots\boxtimes \GG_d) \leq \prod_{i=1}^d \dchi(\GG_i).$$
\end{thm}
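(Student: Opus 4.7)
The plan is to derive this theorem immediately from \cref{DefectiveProductColouring}, paralleling how \cref{ClusteredProductColouringClasses} is derived from \cref{ClusteredProductColouring}. If $\dchi(\GG_i)=\infty$ for some $i$, the stated inequality is vacuous, so I may assume each $k_i:=\dchi(\GG_i)$ is a finite positive integer.

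By the definition of the defective chromatic number, for each $i\in[1,d]$ there exists a constant $c_i\in\NN$ such that every graph in $\GG_i$ admits a $k_i$-colouring with defect $c_i$. Given arbitrary graphs $G_i\in\GG_i$, I would fix such a $k_i$-colouring $\phi_i$ of $G_i$. Viewing $\phi_i$ as a $(k_i\!:\!1)$-colouring of $G_i$ with defect $c_i$, I would then invoke \cref{DefectiveProductColouring} with $p_i=k_i$ and $q_i=1$. This yields a $(\prod_i k_i\!:\!1)$-colouring of $G_1\boxtimes\cdots\boxtimes G_d$, i.e.\ an ordinary $\prod_i k_i$-colouring, with defect at most $\prod_{i=1}^d(1+c_i)-1$.

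The crucial observation is that this defect bound depends only on the constants $c_1,\dots,c_d$ fixed once and for all from the classes $\GG_i$, not on the particular choice of graphs $G_i\in\GG_i$. Hence the single constant $C:=\prod_{i=1}^d(1+c_i)-1$ serves as a uniform defect bound across all graphs in $\GG_1\boxtimes\cdots\boxtimes\GG_d$, which is precisely the witness required by the definition of $\dchi$. Therefore $\dchi(\GG_1\boxtimes\cdots\boxtimes\GG_d)\leq\prod_i k_i=\prod_{i=1}^d\dchi(\GG_i)$.

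I expect no real obstacle here: all the work has already been absorbed into \cref{ProductColouringEta}, which in turn exploits the fact that $\Delta$ is $g$-well-behaved with respect to $\boxtimes$ for $g(a,b)=(a+1)(b+1)-1$. The only point that warrants a moment of care is the purely notational one that a $(p\!:\!1)$-colouring is the same thing as an ordinary $p$-colouring, so that the defect notion from the fractional framework of \cref{DefectiveProductColouring} specialises cleanly to the defect notion appearing in the statement of the theorem.
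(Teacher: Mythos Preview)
Your proposal is correct and mirrors exactly the paper's own justification: the paper simply states that \cref{DefectiveProductColouringClasses} follows from \cref{DefectiveProductColouring}, and your argument spells out this implication in the same way that \cref{ClusteredProductColouringClasses} follows from \cref{ClusteredProductColouring}. There is nothing to add.
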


This is a generalised version of \cref{Consistent}.

\begin{lem}
	\label{ConsistentGeneral}
	Let $\eta$ be a $g$-well-behaved graph parameter. 
	If a graph $G$ has a consistent $(p\!:\!q)$-colouring with $\eta$-defect $c_1$, and 
	a graph $H$ has a $(q\!:\!r)$-colouring with $\eta$-defect $c_2$. Then  $G\boxtimes H$ has a $(p\!:\!r)$-colouring with $\eta$-defect $g(c_1,c_2)$. 
	%Moreover, if in addition the $(q\!:\!r)$-colouring of $H$ is consistent, then the  $(p\!:\!r)$-colouring of $G\boxtimes H$ is consistent.
\end{lem}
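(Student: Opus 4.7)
The plan is to follow the proof of \cref{Consistent} almost verbatim, replacing the final vertex-counting argument by applications of the axioms (W1)--(W3).

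First I would copy the construction: let $\alpha$ be the consistent $(p\!:\!q)$-colouring of $G$ with orderings $\alpha_x^1,\dots,\alpha_x^q$ at each $x\in V(G)$, and let $\beta$ be the $(q\!:\!r)$-colouring of $H$ with colour set $[q]$. Colour each vertex $(x,v)$ of $G\boxtimes H$ by the set $\{\alpha_x^i : i\in \beta(v)\}\in\binom{[p]}{r}$. This is a $(p\!:\!r)$-colouring $\phi$ of $G\boxtimes H$.

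Second, I would reproduce verbatim the combinatorial core of the proof of \cref{Consistent}: for every colour $c\in[p]$ and every connected component $Z$ of the $\phi$-monochromatic subgraph for $c$, there exist an $\alpha$-monochromatic component $A$ of $G$ and a $\beta$-monochromatic component $B$ of $H$ such that $Z\subseteq A\boxtimes B$. As in \cref{Consistent}, consistency of $\alpha$ forces the index $i\in\beta(v)$ witnessing $c=\alpha_x^i$ to be the same across every edge of $Z$, and then connectivity of $Z$ pins down a single $A$ and $B$.

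Finally, I would bound $\eta$ on an arbitrary $\phi$-monochromatic subgraph $X$. By (W2), $\eta(X)$ equals the maximum of $\eta(Z)$ over its connected components $Z$, so it suffices to bound each $\eta(Z)$. By (W1), $\eta(A)\leq c_1$ and $\eta(B)\leq c_2$, since $A$ (respectively $B$) is a subgraph of an $\alpha$-monochromatic (respectively $\beta$-monochromatic) subgraph of $G$ (respectively $H$). By (W1) again, $\eta(Z)\leq \eta(A\boxtimes B)$, and by (W3), $\eta(A\boxtimes B)\leq g(\eta(A),\eta(B))\leq g(c_1,c_2)$, where the last inequality uses the monotonicity of $g$ implicit in the paper (and already invoked in the proof of \cref{ProductColouringEta}). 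Hence $\phi$ has $\eta$-defect at most $g(c_1,c_2)$.

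There is no genuine obstacle; the combinatorial content is identical to \cref{Consistent}, and the only new ingredients are the routine applications of (W1)--(W3). The only mildly subtle bookkeeping point is that (W2) is needed to pass from monochromatic subgraphs (where $\eta$-defect is defined) to their connected components (where the $Z\subseteq A\boxtimes B$ containment lives), before (W3) can be applied.
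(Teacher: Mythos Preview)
Your proposal is correct and matches the paper's approach: the paper gives no separate proof of \cref{ConsistentGeneral}, only noting that it generalises \cref{Consistent}, so the intended argument is precisely the one you outline. Your remark that monotonicity of $g$ is tacitly assumed (here and already in the proof of \cref{ProductColouringEta}) is accurate and harmless, since $g$ may always be replaced by its monotone envelope without affecting (W3).
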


We also have the following generalised version of \cref{ConsistentClustered}.

\begin{lem}
	\label{ConsistentProperGeneral}
	Let $\eta$ be a $g$-well-behaved graph parameter. If a graph $G$ has a proper $(p\!:\!q)$-colouring, and a graph $H$ has a $(q\!:\!r)$-colouring with $\eta$-defect $c$, then  $G\boxtimes H$ has a $(p\!:\!r)$-colouring with $\eta$-defect $g(\eta(K_1),c_2)$. 
\end{lem}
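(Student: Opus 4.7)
The plan is to obtain \cref{ConsistentProperGeneral} as an immediate specialization of \cref{ConsistentGeneral}, once we verify that a proper $(p\!:\!q)$-colouring is automatically a consistent $(p\!:\!q)$-colouring of $\eta$-defect $\eta(K_1)$.

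First, I would check that any proper $(p\!:\!q)$-colouring $\alpha$ of $G$ is consistent. Fix any ordering $\alpha_x^1,\dots,\alpha_x^q$ of $\alpha(x)$ at each vertex $x$. For an edge $xy\in E(G)$, properness gives $\alpha(x)\cap\alpha(y)=\emptyset$, so $\alpha_x^i\in\alpha(x)$ and $\alpha_y^j\in\alpha(y)$ are automatically distinct for \emph{all} $i,j\in[1,q]$ (and in particular for distinct $i,j$). Hence the consistency requirement is vacuously satisfied by any choice of orderings.

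Second, I would compute the $\eta$-defect of such a proper colouring. If $f$ is proper, then every monochromatic subgraph $G[\{v:\alpha\in f(v)\}]$ is an induced subgraph on an independent set, so it is edgeless, i.e.\ a disjoint union of copies of $K_1$. Iterating property (W2) on any disjoint union of at least one $K_1$ yields $\eta$-value equal to $\eta(K_1)$. Therefore the proper $(p\!:\!q)$-colouring of $G$ has $\eta$-defect $\eta(K_1)$.

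Combining these two observations, I would apply \cref{ConsistentGeneral} to the consistent $(p\!:\!q)$-colouring of $G$ with $\eta$-defect $c_1=\eta(K_1)$ and the $(q\!:\!r)$-colouring of $H$ with $\eta$-defect $c_2=c$. This directly produces a $(p\!:\!r)$-colouring of $G\boxtimes H$ with $\eta$-defect $g(c_1,c_2)=g(\eta(K_1),c)$, completing the proof. There is no real obstacle: the lemma is effectively a corollary of \cref{ConsistentGeneral}, with the sole content being the elementary bookkeeping of the $\eta$-defect of a proper colouring.
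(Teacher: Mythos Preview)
Your proposal is correct and follows exactly the approach implicit in the paper: the paper states \cref{ConsistentProperGeneral} without proof as the ``generalized version of \cref{ConsistentClustered}'', and since \cref{ConsistentClustered} was obtained from \cref{Consistent} via the one-line observation that every proper colouring is consistent, the intended argument here is precisely to note that a proper $(p\!:\!q)$-colouring is consistent with $\eta$-defect $\eta(K_1)$ and then invoke \cref{ConsistentGeneral}. Your verification of the $\eta$-defect via (W2) is the only additional bookkeeping needed in the general setting, and it is handled correctly.
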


\begin{cor}
	\label{ConsistentDefectiveGeneral}
	If a graph $G$ has a proper $(p\!:\!q)$-colouring, and a graph $H$ has a $(q\!:\!r)$-colouring with defect $c$, then  $G\boxtimes H$ has a $(p\!:\!r)$-colouring with defect $c$.
\end{cor}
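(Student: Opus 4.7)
The plan is to derive \cref{ConsistentDefectiveGeneral} as a direct specialisation of \cref{ConsistentProperGeneral} to the graph parameter $\eta := \Delta$ (maximum degree). By the definitions in \cref{sec:param}, a colouring has defect $c$ in the sense used by the corollary if and only if it has $\Delta$-defect $c$ in the sense of the generalised framework, so the corollary is exactly the $\eta = \Delta$ instance of the general lemma, provided the bound $g(\eta(K_1),c)$ collapses to $c$.

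First I would invoke the observation already recorded before \cref{ProductColouringEta} that $\Delta$ is $g$-well-behaved with respect to $\boxtimes$ for the function $g(a,b) = (a+1)(b+1) - 1$; this is the only structural fact the proof needs. Next I would compute $\eta(K_1) = \Delta(K_1) = 0$ and, substituting this into $g$, obtain $g(\eta(K_1), c) = g(0, c) = (0+1)(c+1) - 1 = c$. This arithmetic collapse is the one place where the specific form of $g$ for strong products is used.

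With these preliminaries in place, the proof itself is a single line: apply \cref{ConsistentProperGeneral} with $\eta = \Delta$ to the hypothesised proper $(p\!:\!q)$-colouring of $G$ and the $(q\!:\!r)$-colouring of $H$ with defect $c$, producing a $(p\!:\!r)$-colouring of $G \boxtimes H$ with $\eta$-defect $g(\eta(K_1), c) = c$, i.e.\ a $(p\!:\!r)$-colouring of $G \boxtimes H$ with defect $c$, as required.

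There is no real obstacle here. All of the structural content---the construction of the colouring of $G \boxtimes H$, and the argument that its monochromatic components lie inside products $A_x \boxtimes B_v$ of monochromatic components of $G$ and $H$ (using consistency and the proper-colouring hypothesis on $G$)---has already been absorbed into \cref{ConsistentProperGeneral}. The only point requiring care is confirming that $\eta(K_1) = 0$ for $\eta = \Delta$, so that the bound on the output defect does not inflate beyond $c$.
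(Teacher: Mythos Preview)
Your proposal is correct and is precisely the intended derivation: the paper states \cref{ConsistentDefectiveGeneral} without proof as an immediate instance of \cref{ConsistentProperGeneral}, and you have correctly identified that taking $\eta=\Delta$ with $g(a,b)=(a+1)(b+1)-1$ and $\Delta(K_1)=0$ collapses the bound $g(\eta(K_1),c)$ to $c$.
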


\subsection*{Acknowledgements}
 
This research was initiated at the Graph Theory Workshop held at
Bellairs Research Institute in April 2019. We thank the other
workshop participants for creating a productive working
atmosphere (and in particular Vida Dujmovi{\'c} and  Bartosz Walczak for discussions related to the paper). Thanks to both referees for several insightful comments.

%%%%%%%%%%%%%%%%%%%%%%%%%

%{\fontsize{10.5pt}{11pt}\selectfont
%\bibliographystyle{../../bibtex/DavidNatbibStyle}
%\bibliography{../../bibtex/myBibliography}}

\begin{thebibliography}{70}
	\providecommand{\natexlab}[1]{#1}
	\providecommand{\msn}[1]{MR:\,\href{http://www.ams.org/mathscinet-getitem?mr=MR{#1}}{#1}}
	\providecommand{\ZBL}[1]{Zbl:\,\href{https://www.zentralblatt-math.org/zmath/en/search/?q=an:#1}{#1}}
	\providecommand{\url}[1]{\texttt{#1}}
	\providecommand{\urlprefix}{}
	\expandafter\ifx\csname urlstyle\endcsname\relax
	\providecommand{\doi}[1]{doi:\discretionary{}{}{}#1}\else
	\providecommand{\doi}{doi:\discretionary{}{}{}\begingroup
		\urlstyle{rm}\Url}\fi
	
	\bibitem[{Alon(1998)}]{Alon98}
	\textsc{Noga Alon}.
	\newblock \href{https://doi.org/10.1007/PL00009824}{The {S}hannon capacity of a
		union}.
	\newblock \emph{Combinatorica}, 18(3):301--310, 1998.
	
	\bibitem[{Alon(2002)}]{Alon02}
	\textsc{Noga Alon}.
	\newblock Graph powers.
	\newblock In \emph{Contemporary combinatorics}, vol.~10 of \emph{Bolyai Soc.
		Math. Stud.}, pp. 11--28. J\'{a}nos Bolyai Math. Soc., Budapest, 2002.
	
	\bibitem[{Alon et~al.(2003)Alon, Ding, Oporowski, and Vertigan}]{ADOV03}
	\textsc{Noga Alon, Guoli Ding, Bogdan Oporowski, and Dirk Vertigan}.
	\newblock \href{https://doi.org/10.1016/S0095-8956(02)00006-0}{Partitioning
		into graphs with only small components}.
	\newblock \emph{J. Combin. Theory Ser. B}, 87(2):231--243, 2003.
	
	\bibitem[{Alon and Lubetzky(2006)}]{AL06}
	\textsc{Noga Alon and Eyal Lubetzky}.
	\newblock \href{https://doi.org/10.1109/TIT.2006.872856}{The {S}hannon capacity
		of a graph and the independence numbers of its powers}.
	\newblock \emph{IEEE Trans. Inform. Theory}, 52(5):2172--2176, 2006.
	
	\bibitem[{Alon and Orlitsky(1995)}]{AO95}
	\textsc{Noga Alon and Alon Orlitsky}.
	\newblock \href{https://doi.org/10.1109/18.412676}{Repeated communication and
		ramsey graphs}.
	\newblock \emph{{IEEE} Trans. Inf. Theory}, 41(5):1276--1289, 1995.
	
	\bibitem[{Bell and Dranishnikov(2006)}]{BD06}
	\textsc{G.~C. Bell and A.~N. Dranishnikov}.
	\newblock \href{https://doi.org/10.1090/S0002-9947-06-04088-8}{A
		{H}urewicz-type theorem for asymptotic dimension and applications to
		geometric group theory}.
	\newblock \emph{Trans. Amer. Math. Soc.}, 358(11):4749--4764, 2006.
	
	\bibitem[{Berger et~al.(2017)Berger, Dvo{\v{r}}{\'a}k, and Norin}]{BDN17}
	\textsc{Eli Berger, Zden{\v{e}}k Dvo{\v{r}}{\'a}k, and Sergey Norin}.
	\newblock \href{https://doi.org/10.1007/s00493-017-3548-5}{Treewidth of grid
		subsets}.
	\newblock \emph{Combinatorica}, 38(6):1337--1352, 2017.
	
	\bibitem[{Bonamy et~al.(2021)Bonamy, Bousquet, Esperet, Groenland, Liu, Pirot,
		and Scott}]{BBEGLPS}
	\textsc{Marthe Bonamy, Nicolas Bousquet, Louis Esperet, Carla Groenland,
		Chun-Hung Liu, François Pirot, and Alex Scott}.
	\newblock \href{http://arxiv.org/abs/2012.02435}{Asymptotic dimension of
		minor-closed families and {A}ssouad--{N}agata dimension of surfaces}.
	\newblock \emph{J. European Math. Soc. \textup{(in press)}}, 2021.
	\newblock arXiv:2012.02435.
	
	\bibitem[{Brodskiy et~al.(2008)Brodskiy, Dydak, Levin, and Mitra}]{BDLM08}
	\textsc{Nikolay Brodskiy, Jerzy Dydak, Michael Levin, and Atish Mitra}.
	\newblock \href{https://doi.org/10.1112/jlms/jdn005}{A {H}urewicz theorem for
		the {A}ssouad-{N}agata dimension}.
	\newblock \emph{J. Lond. Math. Soc. (2)}, 77(3):741--756, 2008.
	
	\bibitem[{Campbell et~al.(2022)Campbell, Clinch, Distel, Gollin, Hendrey,
		Hickingbotham, Huynh, Illingworth, Tamitegama, Tan, and Wood}]{UTW}
	\textsc{Rutger Campbell, Katie Clinch, Marc Distel, J.~Pascal Gollin, Kevin
		Hendrey, Robert Hickingbotham, Tony Huynh, Freddie Illingworth, Youri
		Tamitegama, Jane Tan, and David~R. Wood}.
	\newblock \href{http://arxiv.org/abs/2206.02395}{Product structure of graph
		classes with bounded treewidth}.
	\newblock 2022, arXiv:2206.02395.
	
	\bibitem[{Choi and Esperet(2019)}]{CE19}
	\textsc{Ilkyoo Choi and Louis Esperet}.
	\newblock \href{https://doi.org/10.1002/jgt.22418}{Improper coloring of graphs
		on surfaces}.
	\newblock \emph{J. Graph Theory}, 91(1):16--34, 2019.
	
	\bibitem[{Cs\'{o}ka et~al.(2015)Cs\'{o}ka, Gerencs\'{e}r, Harangi, and
		Vir\'{a}g}]{CGHV15}
	\textsc{Endre Cs\'{o}ka, Bal\'{a}zs Gerencs\'{e}r, Viktor Harangi, and
		B\'{a}lint Vir\'{a}g}.
	\newblock \href{https://doi.org/10.1002/rsa.20547}{Invariant {G}aussian
		processes and independent sets on regular graphs of large girth}.
	\newblock \emph{Random Structures Algorithms}, 47(2):284--303, 2015.
	
	\bibitem[{Ding and Oporowski(1995)}]{DO95}
	\textsc{Guoli Ding and Bogdan Oporowski}.
	\newblock \href{https://doi.org/10.1002/jgt.3190200412}{Some results on tree
		decomposition of graphs}.
	\newblock \emph{J. Graph Theory}, 20(4):481--499, 1995.
	
	\bibitem[{Distel et~al.(2022)Distel, Hickingbotham, Huynh, and Wood}]{DHHW22}
	\textsc{Marc Distel, Robert Hickingbotham, Tony Huynh, and David~R. Wood}.
	\newblock \href{https://doi.org/10.48550/arXiv.2112.10025}{Improved product
		structure for graphs on surfaces}.
	\newblock \emph{Discrete Math. Theor. Comput. Sci.}, 24(2):\#6, 2022.
	
	\bibitem[{Distel and Wood(2022)}]{DW22}
	\textsc{Marc Distel and David~R. Wood}.
	\newblock \href{http://arxiv.org/abs/2210.12577}{Tree-partitions with bounded
		degree trees}.
	\newblock 2022, arXiv:2210.12577.
	
	\bibitem[{Dujmovi{\'c} et~al.(2020)Dujmovi{\'c}, Joret, Micek, Morin, Ueckerdt,
		and Wood}]{DJMMUW20}
	\textsc{Vida Dujmovi{\'c}, Gwena\"{e}l Joret, Piotr Micek, Pat Morin, Torsten
		Ueckerdt, and David~R. Wood}.
	\newblock \href{https://doi.org/10.1145/3385731}{Planar graphs have bounded
		queue-number}.
	\newblock \emph{J. ACM}, 67(4):\#22, 2020.
	
	\bibitem[{Dujmovi{\'c} et~al.(2019)Dujmovi{\'c}, Morin, and Wood}]{DMW}
	\textsc{Vida Dujmovi{\'c}, Pat Morin, and David~R. Wood}.
	\newblock \href{http://arxiv.org/abs/1907.05168}{Graph product structure for
		non-minor-closed classes}.
	\newblock 2019, arXiv:1907.05168.
	
	\bibitem[{Dvo{\v{r}}{\'a}k(2016)}]{Dvorak16}
	\textsc{Zden{\v{e}}k Dvo{\v{r}}{\'a}k}.
	\newblock \href{https://doi.org/10.1016/j.ejc.2015.09.001}{Sublinear
		separators, fragility and subexponential expansion}.
	\newblock \emph{European J. Combin.}, 52(A):103--119, 2016.
	
	\bibitem[{Dvo{\v{r}}{\'a}k and Norin(2017)}]{DN17}
	\textsc{Zden{\v{e}}k Dvo{\v{r}}{\'a}k and Sergey Norin}.
	\newblock \href{http://arxiv.org/abs/1710.02727}{Islands in minor-closed
		classes. {I}. {B}ounded treewidth and separators}.
	\newblock 2017, arXiv:1710.02727.
	
	\bibitem[{Dvo{\v{r}}{\'a}k and Sereni(2020)}]{DS20}
	\textsc{Zden{\v{e}}k Dvo{\v{r}}{\'a}k and Jean-S\'ebastien Sereni}.
	\newblock \href{https://doi.org/10.37236/8909}{On fractional fragility rates of
		graph classes}.
	\newblock \emph{Electronic J. Combinatorics}, 27:P4.9, 2020.
	
	\bibitem[{Edwards et~al.(2015)Edwards, Kang, Kim, Oum, and Seymour}]{EKKOS15}
	\textsc{Katherine Edwards, Dong~Yeap Kang, Jaehoon Kim, Sang-il Oum, and Paul
		Seymour}.
	\newblock \href{https://doi.org/10.1137/141002177}{A relative of {H}adwiger's
		conjecture}.
	\newblock \emph{SIAM J. Discrete Math.}, 29(4):2385--2388, 2015.
	
	\bibitem[{Erd\H{o}s and Sachs(1963)}]{ES63}
	\textsc{Paul Erd\H{o}s and Horst Sachs}.
	\newblock Regul\"are {G}raphen gegebener {T}aillenweite mit minimaler
	{K}notenzahl.
	\newblock \emph{Wiss. Z. Martin-Luther-Univ. Halle-Wittenberg Math.-Natur.
		Reihe}, 12:251--257, 1963.
	
	\bibitem[{Esperet and Joret(2014)}]{EJ14}
	\textsc{Louis Esperet and Gwena{\"{e}}l Joret}.
	\newblock \href{https://doi.org/10.1017/S0963548314000170}{Colouring planar
		graphs with three colours and no large monochromatic components}.
	\newblock \emph{Combin., Probab. Comput.}, 23(4):551--570, 2014.
	
	\bibitem[{Esperet and Ochem(2016)}]{EO16}
	\textsc{Louis Esperet and Pascal Ochem}.
	\newblock \href{https://doi.org/10.1137/140957883}{Islands in graphs on
		surfaces}.
	\newblock \emph{SIAM J. Discrete Math.}, 30(1):206--219, 2016.
	
	\bibitem[{Farber(1986)}]{Farber86}
	\textsc{Martin Farber}.
	\newblock \href{https://doi.org/10.1137/0607008}{An analogue of the {S}hannon
		capacity of a graph}.
	\newblock \emph{SIAM J. Algebraic Discrete Methods}, 7:67--72, 1986.
	
	\bibitem[{Farkasov\'a and Sot\'ak(2015)}]{FS15}
	\textsc{Zuzana Farkasov\'a and Roman Sot\'ak}.
	\newblock Fractional and circular 1-defective colorings of outerplanar graphs.
	\newblock \emph{Australas. J. Combin.}, 63:1--11, 2015.
	
	\bibitem[{Gale(1979)}]{Gale79}
	\textsc{David Gale}.
	\newblock \href{https://doi.org/10.2307/2320146}{The game of {H}ex and the
		{B}rouwer fixed-point theorem}.
	\newblock \emph{Amer. Math. Monthly}, 86(10):818--827, 1979.
	
	\bibitem[{Goddard and Xu(2016)}]{GX16}
	\textsc{Wayne Goddard and Honghai Xu}.
	\newblock \href{https://doi.org/10.1002/jgt.21868}{Fractional, circular, and
		defective coloring of series-parallel graphs}.
	\newblock \emph{J. Graph Theory}, 81(2):146--153, 2016.
	
	\bibitem[{Gromov(1993)}]{Gro93}
	\textsc{M.~Gromov}.
	\newblock Asymptotic invariants of infinite groups.
	\newblock In \emph{Geometric group theory, {V}ol. 2 ({S}ussex, 1991)}, vol. 182
	of \emph{London Math. Soc. Lecture Note Ser.}, pp. 1--295. Cambridge Univ.
	Press, 1993.
	
	\bibitem[{Hadwiger(1943)}]{Hadwiger43}
	\textsc{Hugo Hadwiger}.
	\newblock \href{http://www.ngzh.ch/archiv/1943_88/88_2/88_17.pdf}{\"{U}ber eine
		{K}lassifikation der {S}treckenkomplexe}.
	\newblock \emph{Vierteljschr. Naturforsch. Ges. Z\"urich}, 88:133--142, 1943.
	
	\bibitem[{Hayward and Toft(2019)}]{HT19}
	\textsc{Ryan~B. Hayward and Bjarne Toft}.
	\newblock \href{https://doi.org/10.1201/9780429031960}{Hex, inside and
		out---the full story}.
	\newblock CRC Press, 2019.
	
	\bibitem[{Hell and Roberts(1982)}]{HR82}
	\textsc{Pavol Hell and Fred~S. Roberts}.
	\newblock Analogues of the {S}hannon capacity of a graph.
	\newblock In \emph{Theory and practice of combinatorics}, vol.~60 of
	\emph{North-Holland Math. Stud.}, pp. 155--168. North-Holland, 1982.
	
	\bibitem[{Hendrey and Wood(2019)}]{HW19}
	\textsc{Kevin Hendrey and David~R. Wood}.
	\newblock \href{https://doi.org/10.1017/S0963548319000063}{Defective and
		clustered colouring of sparse graphs}.
	\newblock \emph{Combin. Probab. Comput.}, 28(5):791--810, 2019.
	
	\bibitem[{Hickingbotham and Wood(2021)}]{HW21b}
	\textsc{Robert Hickingbotham and David~R. Wood}.
	\newblock \href{http://arxiv.org/abs/2111.12412}{Shallow minors, graph products
		and beyond planar graphs}.
	\newblock 2021, arXiv:2111.12412.
	
	\bibitem[{Illingworth et~al.(2022)Illingworth, Scott, and Wood}]{ISW}
	\textsc{Freddie Illingworth, Alex Scott, and David~R. Wood}.
	\newblock \href{http://arxiv.org/abs/2104.06627}{Product structure of graphs
		with an excluded minor}.
	\newblock 2022, arXiv:2104.06627.
	
	\bibitem[{Kang and Oum(2019)}]{KO19}
	\textsc{Dong~Yeap Kang and Sang-il Oum}.
	\newblock \href{https://doi.org/10.1017/S0963548318000548}{Improper coloring of
		graphs with no odd clique minor}.
	\newblock \emph{Combin. Probab. Comput.}, 28(5):740--754, 2019.
	
	\bibitem[{Karasev(2013)}]{Karasev13}
	\textsc{Roman~N. Karasev}.
	\newblock \href{https://doi.org/10.1007/s00454-013-9490-4}{An analogue of
		{G}romov's waist theorem for coloring the cube}.
	\newblock \emph{Discrete {\&} Computational Geometry}, 49(3):444--453, 2013.
	
	\bibitem[{Klav\v{z}ar(1993)}]{Klavzar93}
	\textsc{Sandi Klav\v{z}ar}.
	\newblock \href{https://doi.org/10.1007/BF01855874}{Strong products of
		{$\chi$}-critical graphs}.
	\newblock \emph{Aequationes Math.}, 45(2-3):153--162, 1993.
	
	\bibitem[{Klav\v{z}ar(1998)}]{Klavzar98}
	\textsc{Sandi Klav\v{z}ar}.
	\newblock \href{https://doi.org/10.1016/S0012-365X(97)00212-4}{On the
		fractional chromatic number and the lexicographic product of graphs}.
	\newblock \emph{Discrete Math.}, 185(1-3):259--263, 1998.
	
	\bibitem[{Klav\v{z}ar and Milutinovi\'{c}(1994)}]{KM94}
	\textsc{Sandi Klav\v{z}ar and Uro\v{s} Milutinovi\'{c}}.
	\newblock \href{https://doi.org/10.1016/0012-365X(94)90037-X}{Strong products
		of {K}neser graphs}.
	\newblock \emph{Discrete Math.}, 133(1-3):297--300, 1994.
	
	\bibitem[{Klav\v{z}ar and Yeh(2002)}]{KY02}
	\textsc{Sandi Klav\v{z}ar and Hong-Gwa Yeh}.
	\newblock \href{https://doi.org/10.1016/S0012-365X(01)00312-0}{On the
		fractional chromatic number, the chromatic number, and graph products}.
	\newblock \emph{Discrete Math.}, 247(1-3):235--242, 2002.
	
	\bibitem[{Klav{\v{z}}ar(1996)}]{Klavzar96}
	\textsc{Sandi Klav{\v{z}}ar}.
	\newblock Coloring graph products---a survey.
	\newblock \emph{Discrete Math.}, 155(1--3):135--145, 1996.
	
	\bibitem[{Klostermeyer(2002)}]{Klostermeyer02}
	\textsc{William Klostermeyer}.
	\newblock Defective circular coloring.
	\newblock \emph{Australas. J. Combin.}, 26:21--32, 2002.
	
	\bibitem[{Krauthgamer and Lee(2007)}]{KL07}
	\textsc{Robert Krauthgamer and James~R. Lee}.
	\newblock \href{https://doi.org/10.1007/s00493-007-2183-y}{The intrinsic
		dimensionality of graphs}.
	\newblock \emph{Combinatorica}, 27(5):551--585, 2007.
	
	\bibitem[{Linial et~al.(2008)Linial, Matou{\v{s}}ek, Sheffet, and
		Tardos}]{LMST08}
	\textsc{Nathan Linial, Ji{\v{r}}{\'i} Matou{\v{s}}ek, Or~Sheffet, and G\'abor
		Tardos}.
	\newblock \href{https://doi.org/10.1017/S0963548308009140}{Graph colouring with
		no large monochromatic components}.
	\newblock \emph{Combin. Probab. Comput.}, 17(4):577--589, 2008.
	
	\bibitem[{Liu and Oum(2018)}]{LO18}
	\textsc{Chun-Hung Liu and Sang-il Oum}.
	\newblock \href{https://doi.org/10.1016/j.jctb.2017.08.003}{Partitioning
		{$H$}-minor free graphs into three subgraphs with no large components}.
	\newblock \emph{J. Combin. Theory Ser. B}, 128:114--133, 2018.
	
	\bibitem[{Liu and Wood(2019{\natexlab{a}})}]{LW2}
	\textsc{Chun-Hung Liu and David~R. Wood}.
	\newblock \href{http://arxiv.org/abs/1905.09495}{Clustered coloring of graphs
		excluding a subgraph and a minor}.
	\newblock 2019{\natexlab{a}}, arXiv:1905.09495.
	
	\bibitem[{Liu and Wood(2019{\natexlab{b}})}]{LW1}
	\textsc{Chun-Hung Liu and David~R. Wood}.
	\newblock \href{http://arxiv.org/abs/1905.08969}{Clustered graph coloring and
		layered treewidth}.
	\newblock 2019{\natexlab{b}}, arXiv:1905.08969.
	
	\bibitem[{Liu and Wood(2022{\natexlab{a}})}]{LW4}
	\textsc{Chun-Hung Liu and David~R. Wood}.
	\newblock \href{http://arxiv.org/abs/2209.12327}{Clustered coloring of graphs
		with bounded layered treewidth and bounded degree}.
	\newblock 2022{\natexlab{a}}, arXiv:2209.12327.
	
	\bibitem[{Liu and Wood(2022{\natexlab{b}})}]{LW3}
	\textsc{Chun-Hung Liu and David~R. Wood}.
	\newblock \href{https://doi.org/10.1016/j.jctb.2021.09.002}{Clustered variants
		of {H}aj\'os' conjecture}.
	\newblock \emph{J. Combin. Theory, Ser. B}, 152:27--54, 2022{\natexlab{b}}.
	
	\bibitem[{Liu and Wood(tion)}]{LW5}
	\textsc{Chun-Hung Liu and David~R. Wood}.
	\newblock Fractional clustered colourings of graphs with no {$K_{s,t}$}
	subgaph,
	\newblock in preparation.
	
	\bibitem[{Lov{\'{a}}sz(1979)}]{Lovasz79}
	\textsc{L{\'{a}}szl{\'{o}} Lov{\'{a}}sz}.
	\newblock \href{https://doi.org/10.1109/TIT.1979.1055985}{On the {S}hannon
		capacity of a graph}.
	\newblock \emph{{IEEE} Trans. Inf. Theory}, 25(1):1--7, 1979.
	
	\bibitem[{Matdinov(2013)}]{Matdinov13}
	\textsc{Marsel Matdinov}.
	\newblock \href{https://doi.org/10.1007/s00454-013-9504-2}{Size of components
		of a cube coloring}.
	\newblock \emph{Discrete {\&} Computational Geometry}, 50(1):185--193, 2013.
	
	\bibitem[{Matou{\v{s}}ek and P\v{r}\'\i{v}\v{e}tiv\'{y}(2008)}]{MP08}
	\textsc{Ji{\v{r}}{\'i} Matou{\v{s}}ek and Ale\v{s} P\v{r}\'\i{v}\v{e}tiv\'{y}}.
	\newblock \href{https://doi.org/10.1137/070684112}{Large monochromatic
		components in two-colored grids}.
	\newblock \emph{SIAM J. Discrete Math.}, 22(1):295--311, 2008.
	
	\bibitem[{Mih\'ok et~al.(2011)Mih\'ok, Oravcov\'a, and Sot\'ak}]{MOS11}
	\textsc{Peter Mih\'ok, Janka Oravcov\'a, and Roman Sot\'ak}.
	\newblock \href{https://doi.org/10.7151/dmgt.1550}{Generalized circular
		colouring of graphs}.
	\newblock \emph{Discuss. Math. Graph Theory}, 31(2):345--356, 2011.
	
	\bibitem[{Mohar et~al.(2017)Mohar, Reed, and Wood}]{MRW17}
	\textsc{Bojan Mohar, Bruce Reed, and David~R. Wood}.
	\newblock \href{https://ajc.maths.uq.edu.au/pdf/69/ajc_v69_p236.pdf}{Colourings
		with bounded monochromatic components in graphs of given circumference}.
	\newblock \emph{Australas. J. Combin.}, 69(2):236--242, 2017.
	
	\bibitem[{Norin et~al.(2019)Norin, Scott, Seymour, and Wood}]{NSSW19}
	\textsc{Sergey Norin, Alex Scott, Paul Seymour, and David~R. Wood}.
	\newblock \href{https://doi.org/10.1007/s00493-019-3848-z}{Clustered colouring
		in minor-closed classes}.
	\newblock \emph{Combinatorica}, 39(6):1387--1412, 2019.
	
	\bibitem[{Norin et~al.(2023)Norin, Scott, and Wood}]{NSW22}
	\textsc{Sergey Norin, Alex Scott, and David~R. Wood}.
	\newblock \href{https://doi.org/10.1017/S0963548322000165}{Clustered colouring
		of graph classes with bounded treedepth or pathwidth}.
	\newblock \emph{Combin. Probab. Comput.}, 32:122--133, 2023.
	
	\bibitem[{Reed and Seymour(1998)}]{RS98}
	\textsc{Bruce~A. Reed and Paul Seymour}.
	\newblock \href{https://doi.org/10.1006/jctb.1998.1835}{Fractional colouring
		and {H}adwiger's conjecture}.
	\newblock \emph{J. Combin. Theory Ser. B}, 74(2):147--152, 1998.
	
	\bibitem[{Sabidussi(1957)}]{Sabidussi57}
	\textsc{Gert Sabidussi}.
	\newblock \href{https://doi.org/10.4153/CJM-1957-060-7}{Graphs with given group
		and given graph-theoretical properties}.
	\newblock \emph{Canad. J. Math.}, 9:515--525, 1957.
	
	\bibitem[{Scheinerman and Ullman(1997)}]{SU97}
	\textsc{Edward~R. Scheinerman and Daniel~H. Ullman}.
	\newblock Fractional graph theory.
	\newblock Wiley, 1997.
	
	\bibitem[{Shannon(1956)}]{Shannon56}
	\textsc{Claude~E. Shannon}.
	\newblock \href{https://doi.org/10.1109/TIT.1956.1056798}{The zero error
		capacity of a noisy channel}.
	\newblock \emph{{IRE} Trans. Inf. Theory}, 2(3):8--19, 1956.
	
	\bibitem[{Shitov(2019)}]{Shitov19}
	\textsc{Yaroslav Shitov}.
	\newblock \href{https://doi.org/10.4007/annals.2019.190.2.6}{Counterexamples to
		{H}edetniemi's conjecture}.
	\newblock \emph{Ann. of Math. (2)}, 190(2):663--667, 2019.
	
	\bibitem[{Ueckerdt et~al.(2022)Ueckerdt, Wood, and Yi}]{UWY22}
	\textsc{Torsten Ueckerdt, David~R. Wood, and Wendy Yi}.
	\newblock \href{https://doi.org/10.37236/10614}{An improved planar graph
		product structure theorem}.
	\newblock \emph{Electron. J. Combin.}, 29:P2.51, 2022.
	
	\bibitem[{van~den Heuvel and Wood(2018)}]{vdHW18}
	\textsc{Jan van~den Heuvel and David~R. Wood}.
	\newblock \href{https://doi.org/10.1112/jlms.12127}{Improper colourings
		inspired by {H}adwiger's conjecture}.
	\newblock \emph{J. London Math. Soc.}, 98:129--148, 2018.
	
	\bibitem[{Vesztergombi(7879)}]{Vesztergombi78}
	\textsc{Katalin Vesztergombi}.
	\newblock Some remarks on the chromatic number of the strong product of graphs.
	\newblock \emph{Acta Cybernet.}, 4(2):207--212, 1978/79.
	
	\bibitem[{Vesztergombi(1981)}]{Vesztergombi81}
	\textsc{Katalin Vesztergombi}.
	\newblock Chromatic number of strong product of graphs.
	\newblock In \emph{Algebraic methods in graph theory, {V}ol. {I}, {II}
		({S}zeged, 1978)}, vol.~25 of \emph{Colloq. Math. Soc. J\'{a}nos Bolyai}, pp.
	819--825. North-Holland, 1981.
	
	\bibitem[{\v{Z}erovnik(2006)}]{Zerovnik06}
	\textsc{Janez \v{Z}erovnik}.
	\newblock Chromatic numbers of the strong product of odd cycles.
	\newblock \emph{Math. Slovaca}, 56(4):379--385, 2006.
	
	\bibitem[{Wood(2009)}]{Wood09}
	\textsc{David~R. Wood}.
	\newblock \href{https://doi.org/10.1016/j.ejc.2008.11.010}{On
		tree-partition-width}.
	\newblock \emph{European J. Combin.}, 30(5):1245--1253, 2009.
	
	\bibitem[{Wood(2018)}]{WoodSurvey}
	\textsc{David~R. Wood}.
	\newblock \href{https://doi.org/10.37236/7406}{Defective and clustered graph
		colouring}.
	\newblock \emph{Electron. J. Combin.}, DS23, 2018.
	\newblock Version 1.
	
\end{thebibliography}
%\input{main-revised.bbl}

\def\soft#1{\leavevmode\setbox0=\hbox{h}\dimen7=\ht0\advance \dimen7
	by-1ex\relax\if t#1\relax\rlap{\raise.6\dimen7
		\hbox{\kern.3ex\char'47}}#1\relax\else\if T#1\relax
	\rlap{\raise.5\dimen7\hbox{\kern1.3ex\char'47}}#1\relax \else\if
	d#1\relax\rlap{\raise.5\dimen7\hbox{\kern.9ex \char'47}}#1\relax\else\if
	D#1\relax\rlap{\raise.5\dimen7 \hbox{\kern1.4ex\char'47}}#1\relax\else\if
	l#1\relax \rlap{\raise.5\dimen7\hbox{\kern.4ex\char'47}}#1\relax \else\if
	L#1\relax\rlap{\raise.5\dimen7\hbox{\kern.7ex
			\char'47}}#1\relax\else\message{accent \string\soft \space #1 not
		defined!}#1\relax\fi\fi\fi\fi\fi\fi}

\end{document}